\newcommand{\carr}{\righttoleftarrow}
\definecolor{halfgray}{gray}{0.55} 
\definecolor{webgreen}{rgb}{0,0.5,0}
\definecolor{webbrown}{rgb}{.6,0,0} \hypersetup{%
\newcommand{\abs}[1]{\left\lvert{#1}\right\rvert}
\newcommand{\norm}[1]{\left\|{#1}\right\|}
\newcommand{\scprod}[2]{\left\langle{#1},{#2}\right\rangle}
\newcommand{\bb}{\mathbb} 
 \newcommand{\ms}{\mathscr}
\newcommand{\R}{\mathbb{R}}\newcommand{\N}{\mathbb{N}}
\newcommand{\Z}{\mathbb{Z}}\newcommand{\Q}{\mathbb{Q}}
\newcommand{\T}{\mathbb{T}}
\newcommand{\A}{\mathbb{A}}
 \newcommand{\ie}{i.e.\ }
\newcommand{\eg}{e.g.\ }
\newtheorem{theorem}{Theorem}[section] \newtheorem*{mainthmA}{Theorem
  A} 
\newtheorem{proposition}[theorem]{Proposition}
\newtheorem{lemma}[theorem]{Lemma}
\newtheorem{corollary}[theorem]{Corollary}
\theoremstyle{definition} \newtheorem{definition}{Definition}[section]
\theoremstyle{remark} \newtheorem{remark}[theorem]{Remark}
\newcommand{\Homeo}[1]{\mathrm{Homeo}_{#1}}
 \newcommand{\dd}{\:\mathrm{d}}
\DeclareMathOperator{\M}{\mathfrak{M}} 
\DeclareMathOperator{\Per}{Per} \DeclareMathOperator{\Fix}{Fix}
\DeclareMathOperator{\SL}{SL} \DeclareMathOperator{\GL}{GL}
\DeclareMathOperator{\diam}{diam}
\newcommand{\cc}{\mathrm{cc}} 
\newcommand{\pr}[1]{\mathrm{pr}_{#1}} 
\newcommand{\eps}{\varepsilon} \newcommand{\U}{\mathscr{U}}
 \newcommand{\Tt}{\mathscr{T}}
\newcommand{\W}{\mathscr{W}}
\DeclareMathOperator{\Fill}{Fill}
\DeclareMathOperator{\supp}{supp}
\title[Irrational rotation factors]{Periodic point free homeomorphisms
  and irrational rotation factors}
\author{Alejandro Kocsard} 
\email{akocsard@id.uff.br}
\thanks{The autor was partially supported by FAPERJ (Brazil) and CNPq
  (Brazil).}
\address{IME - Universidade Federal Fluminense. Rua Prof. Marcos
  Waldemar de Freitas Reis, S/N. Bloco H, $4^\circ$
  andar. 24.210-201, Gragoatá, Niterói, RJ, Brasil}
\date{\today}
\begin{document}

\maketitle

\begin{abstract}
  We provide a complete characterization of periodic point free
  homeomorphisms of the $2$-torus admitting irrational circle
  rotations as topological factors. Given a homeomorphism of the
  $2$-torus without periodic points and exhibiting uniformly bounded
  rotational deviations with respect to a rational direction, we show
  that annularity and the geometry of its non-wandering set are the
  only possible obstructions for the existence of an irrational circle
  rotation as topological factor. Through a very precise study of the
  dynamics of the induced \emph{$\rho$-centralized skew-product}, we
  extend and generalize considerably previous results of Jäger
  \cite{JaegerLinearConsTorus}.
\end{abstract}

\section{Introduction}
\label{sec:intro}

The principal aim of this work consists in giving a complete
characterization of those homeomorphisms of the two-dimensional torus
$\T^2$ that admit an irrational circle rotation as topological
factor. These topological factors, that will be called
\emph{irrational circle factors} for short, are extremely useful in
the dynamical study of these homeomorphisms because they determine an
invariant cyclic order on the system. In fact, the partition of the
torus $\T^2$ given by the fibers of the semi-conjugacy map is
invariant by the dynamics and all the atoms of the partition have a
well-defined rational homological direction, forming an interesting
invariant geometric structure that we called \emph{torus
  pseudo-foliation} in \cite{KocRotDevPerPFree}. Just to mention an
application of these irrational circle factors, and the
pseudo-foliations that they generate, let us say they played a key
role in recent progresses \cite{KocMinimalHomeosNotPseudo,
  KoropeckiPasseggiSambarino, PasseggiSambarinoDevFMConj} on the so
called Franks-Misiurewicz conjecture \cite[Conjecture]{FranksMisiure}.

Regarding necessary conditions for the existence of an irrational
circle factor, the first one we encounter is the so called
\emph{bounded rotational deviations.} In fact, if
$g\colon\T=\R/\Z\carr$ is an arbitrary orientation preserving circle
homeomorphism and $\tilde g\colon\R\carr$ is a lift of $g$, its
\emph{rotation number} $\rho(\tilde g)\in\R$ can be characterized as
the only real number such that
\begin{displaymath}
  \abs{\tilde g^n(x)-x-n\rho(\tilde g)}\leq 1, \quad\forall x\in\R,\
  \forall n\in\Z. 
\end{displaymath}

This property implies (see for instance \cite[Lemma
3.1]{JaegerTalIrratRotFact}) that if $f\colon\T^2\carr$ is an
orientation preserving homeomorphism admitting an irrational circle
factor, then there exist a rational slope vector
$v\in\R^2\setminus\{0\}$ and a positive constant $C=C(f,v)>0$ such
that for any lift $\tilde f\colon\R^2\carr$ of $f$, there is an
irrational number $\rho=\rho(\tilde f,v)\in\R\setminus\Q$ satisfying
\begin{equation}
  \label{eq:v-dev-def}
  \abs{\scprod{\tilde f^n(z)-z}{v}-n\rho}\leq C, \quad\forall
  z\in\R^2,\ \forall n\in\Z.  
\end{equation}

So, it is natural to ask whether estimate \eqref{eq:v-dev-def} is also
sufficient to guarantee the existence of an irrational circle factor.

It is known that this question has a positive answer in some
particular cases. For instance, as a rather straightforward
consequence of classical Gottschalk-Hedlund theorem
\cite{GottschalkHedlundTopDyn}, one can show that any minimal torus
homeomorphism verifying \eqref{eq:v-dev-def} necessarily admits an
irrational circle factor. A much subtler positive result is due to
Jäger~\cite[Theorem C]{JaegerLinearConsTorus}, who showed that any
area-preserving totally irrational pseudo-rotation of $\T^2$ that
exhibits uniformly bounded rotational deviations in any direction (\ie
its rotation set reduces to a single point $\tilde\rho\in\R^2$, where
the corresponding rotation on $\T^2$ induced by $\tilde\rho$ is
minimal, and condition \eqref{eq:v-dev-def} holds for every
$v\in\R^2\setminus\{0\}$) is a topological extension of a totally
irrational (\ie minimal) translation of $\T^2$, where this
semi-conjugacy is constructed as the combination of two
``transversal'' irrational circle factors. For some time these were
the only known positive results. At this point it is interesting to
remark that Wang and Zhang have recently proved in
\cite{WangZhangRigidityPseudoRotNortonSull} the existence of a
$C^\infty$ area-preserving diffeomorphism of $\T^2$ which is a
topological extension of a rigid rotation, but it is not conjugate to
it.

On the other hand, Jäger and Tal have shown in
\cite[\S4]{JaegerTalIrratRotFact} that boundedness of rotational
deviations (\ie estimate \eqref{eq:v-dev-def} holds) does not in
general imply the existence of an irrational circle factor for
homeomorphisms of the closed annulus $\T\times[0,1]$. Consequently,
annularity could be an obstruction for the existence of such factors
for homeomorphisms of $\T^2$ (see \S\ref{sec:rotat-deviations} for
precise definitions and details).

In \S\ref{sec:wand-points-obstruction-Kronecker} we will see that the
existence of wandering points may be also another obstruction for the
existence of irrational circle factors for periodic point free
homeomorphisms satisfying \eqref{eq:v-dev-def}. In fact, we will see
that the topology and geometry of the wandering set play a determinant
role in the existence of irrational circle factors (see
Definition~\ref{def:small-wandering-domains} for details).

The main purpose of this work consists in getting a complete
characterization of possible obstructions for the existence of such
topological factors, showing that annularity and the existence of
``large connected components of wandering points'' are the only
ones. More precisely, our main result is the following

\begin{mainthmA}
  Let $f\in\Homeo{+}(\T^2)$ be an orientation preserving,
  non-eventually annular homeomorphism having small wandering
  domains. Then, $f$ admits an irrational circle rotation as
  topological factor if and only if there exists
  $v\in\R^2\setminus\{0\}$ with rational slope so that $f$ exhibits
  uniformly bounded rotational $v$-deviations, \ie given any lift
  $\tilde f\colon\R^2\carr$, there exist $\rho\in\R\setminus\Q$ and
  $C>0$ such that
  \begin{equation}
    \label{eq:v-bounded-deviation}
    \abs{\langle\tilde f^n(z)-z,v\rangle - n\rho} \leq C, \quad\forall
    z\in\R^2,\ \forall n\in\Z. 
  \end{equation}
\end{mainthmA}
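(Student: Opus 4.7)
The ``only if'' direction is essentially the Jäger--Tal observation cited as \cite[Lemma~3.1]{JaegerTalIrratRotFact} in the introduction, so the plan concentrates on the sufficiency. Fix a primitive integer vector $v\in\Z^2$ in the prescribed rational direction, pick a lift $\tilde f$, and let $\rho\in\R\setminus\Q$ and $C>0$ be as in \eqref{eq:v-bounded-deviation}. The displacement cocycle $\phi(z):=\scprod{\tilde f(\tilde z)-\tilde z}{v}-\rho$ descends to a continuous map $\phi\in C(\T^2)$ because $v$ is integer, and its Birkhoff sums $\phi_n=\sum_{k=0}^{n-1}\phi\circ f^k$ coincide with $\scprod{\tilde f^n-\mathrm{id}}{v}-n\rho$, hence are uniformly bounded in absolute value by $C$. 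The goal is to build a continuous $h\colon\T^2\to\T$ with $h\circ f=R_\rho\circ h$.

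The central device is the \emph{$\rho$-centralized skew-product} announced in the abstract: the map $F\colon\T^2\times\T\carr$ defined by $F(z,s)=(f(z),s+\phi(z))$. A continuous semiconjugacy of the required form is equivalent to the existence of a continuous $u\colon\T^2\to\R$ with $u\circ f-u=\phi$, for then $h(z)=\scprod{z}{v}-u(z)\pmod 1$ intertwines $f$ and $R_\rho$. Boundedness of $\phi_n$ is precisely the Gottschalk--Hedlund hypothesis, but since $f$ is not assumed to be minimal we cannot conclude directly; instead I would analyze the closures of $F$-orbits in $\T^2\times\T$. The bounded cocycle forces every $F$-orbit to stay in a fibrewise compact strip, so $F$ admits $F$-invariant compact sets $K\subset\T^2\times\T$ projecting surjectively onto $\T^2$, and any minimal set $M\subset K$ projects onto some compact $f$-invariant subset of $\T^2$.

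The heart of the argument is to upgrade such an $M$ to the graph of a continuous function. First, I would invoke periodic point freeness together with the irrationality of $\rho$ to deduce that $F$ is itself periodic point free, and exploit the fact that $f$ possesses a full-measure recurrent set (by Birkhoff/Poincaré) together with the non-eventually annular hypothesis to show that the projection $M\to\T^2$ is surjective with uniformly bounded fibres. The \emph{small wandering domains} hypothesis is then used to rule out multi-valuedness of these fibres over wandering components: if the fibre over some point collapsed differently along distinct accumulation sequences, one could extract a wandering component carrying an essential loop or an essential arc in the $v^\perp$-direction, contradicting smallness. Finally, non-annularity is used to discard the Jäger--Tal type counterexamples on the annulus, where $M$ splits into several disjoint graphs with different $\T$-translates; annular splitting of this kind produces an essential $f$-invariant annulus, against the hypothesis.

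The principal obstacle I expect is precisely the step that forces $M$ to be a single continuous graph rather than a finite-to-one extension or a lamination. This is the step where the subtle interplay between the topology of the wandering set, the non-annularity of $f$, and the dynamics of the skew-product $F$ must be exploited simultaneously; Jäger's arguments in \cite{JaegerLinearConsTorus} treat the pseudo-rotation case where minimality of the associated rotation vector is available, and our weaker hypotheses require replacing minimality by a careful topological analysis of the atoms of the prospective factor map. Once continuity of the section is established, defining $h$ as above and checking surjectivity and the semiconjugacy relation is routine, and the equivalence stated in Theorem~A follows.
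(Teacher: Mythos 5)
Your ``only if'' direction and the cohomological reformulation are correct: writing $\phi(z)=\scprod{\tilde f(\tilde z)-\tilde z}{v}-\rho$, the existence of a semi-conjugacy $h\colon\T^2\to\T$ with $h\circ f=T_\rho\circ h$ in the prescribed homological direction is equivalent to $\phi$ being a continuous coboundary $u\circ f-u=\phi$. However, the device you call the ``$\rho$-centralized skew-product'' is not the one the paper uses and, more importantly, your plan for the sufficiency has a genuine gap.

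First, the mismatch. Your $F(z,s)=(f(z),s+\phi(z))$ is a circle extension of $f$ over $\T^2$. The paper's $\rho$-centralized skew-product (equation \eqref{eq:rho-centr-skew-prod-def}) lives on $\T\times\A$ and is fibered over the \emph{irrational rotation} $T_\rho\colon\T\carr$ in the base, with fibre dynamics $\hat T_{\tilde t}^{-1}\circ(\hat T_\rho^{-n}\circ\hat f^n)\circ\hat T_{\tilde t}$ on the annulus $\A$; see \eqref{eq:F-fundamental-prop}. Crucially, it also carries the isometric symmetry flow $\Gamma^s=T_s\times\hat T_{-s}$ (Proposition~\ref{pro:F-Hs-commutes}), which has no analogue in your circle extension and is essential for producing the one-parameter family of continua $\ms{C}^s$. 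These are genuinely different constructions sharing a name.

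Second, and more seriously, the extension step in your argument does not work as stated. A minimal set $M$ of your $F$ projects onto an $f$-minimal subset $K\subset\Omega(f)$, and when $f$ has wandering points this is a proper (often nowhere dense) subset of $\T^2$. Even granting that $M$ is a graph over $K$, you would still have to extend $u$ continuously over the entire wandering set, which is precisely where the whole difficulty lies: the fibres of the sought semi-conjugacy must swallow wandering domains, and your minimal-set/graph framework gives no mechanism for that. Your appeal to Gottschalk--Hedlund beyond minimality, your remark that ``recurrent points have full measure,'' and the heuristic about ``essential loops in wandering components'' do not supply one. In the paper this is exactly the content of the construction of the bounded $F$-invariant open set $\Tt$ in Lemma~\ref{lem:Tau-F-invariant-construction}: the fibres $\ms{C}^s$ are obtained as $\partial_\A\big(\Tt_0^{s,-}\big)$, i.e.\ as boundaries of connected components of the complement of $\Gamma$-translates of $\overline{\Tt}$, not as graphs. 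The small wandering domain hypothesis enters concretely through the $\Omega$-recurrence of $f$ (Proposition~\ref{pro:small-wand-domains-implies-Omega-recurr}), through the resulting dictionary $\Omega(f)\leftrightarrow\Omega(F)$ (Theorem~\ref{thm:Omega-f-vs-Omega-F}), and through a quantitative estimate summing the diameters $\diam W_{n_k}\to 0$ of wandering domains in the proof of Lemma~\ref{lem:Tau-F-invariant-construction} (estimate \eqref{eq:last-cooridante-w-k+1-estimate}); the non-annularity hypothesis enters through full essentiality of non-wandering points (Proposition~\ref{pro:ppf-nonannular-homeos-fully-essential-points}) and Lemmas~\ref{lem:eventually-large-non-wandering-open-sets}--\ref{lem:essential-annular-images-open-sets}, which control the horizontal spreading of blocks in the skew-product and are what ultimately forces the fibres $\ms{C}^s$ to be pairwise disjoint essential annular continua (Theorem~\ref{thm:separating-sets-orbits-of-open-sets} and \eqref{eq:Cs-disjoint-2-by-2}). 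None of this is visible from the circle-extension picture, so your proposal as written would need to be rebuilt from the ground up rather than merely tightened.
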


Here \emph{small wandering domains} means that all connected
components of the wandering set are homotopically trivial in $\T^2$
and their diameters are eventually small (see
Definition~\ref{def:small-wandering-domains} for details). It is
important to remark that Jäger and Tal have proved this result under
the area-preserving assumption \cite[Theorem
1.1]{JaegerTalIrratRotFact}.

On the other hand, let us observe that a homeomorphism admitting a
periodic point free factor must be periodic point free itself. So this
imposes some restrictions to the possible isotopy classes of the torus
homeomorphism (see
Proposition~\ref{pro:periodic-point-free-homeos-conj-Hk} for details).

As consequence of Theorem A, we get the following extension of Jäger's
main result of \cite{JaegerLinearConsTorus}:

\begin{corollary}
  \label{cor:Jaeger-generalization}
  If $f\colon\T^2\carr$ is a totally irrational pseudo-rotation
  exhibiting uniformly bounded rotational deviations and having small
  wandering domains (\eg $f$ is non-wandering), then $f$ is a
  topological extension of the corresponding minimal rigid translation
  of $\T^2$.
\end{corollary}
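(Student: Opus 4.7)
The plan is to deduce the corollary from Theorem~A by extracting two irrational circle factors in independent rational directions and combining them into a semiconjugacy from $f$ onto a minimal translation of $\T^2$.

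First I would check that $f$ fits the hypotheses of Theorem~A, the only non-trivial point being the absence of eventual annularity. Write $\tilde\rho=(\rho_1,\rho_2)$ for the unique rotation vector of $f$, so that $1,\rho_1,\rho_2$ are $\Q$-linearly independent and hence so are $1,n\rho_1,n\rho_2$ for every $n\in\Z\setminus\{0\}$. If some iterate $f^n$ were annular, its singleton rotation set $\{n\tilde\rho\}$ would be forced to lie on a rational affine line $\{(x,y)\in\R^2 : ax+by=c\}$ with $(a,b,c)\in\Z^3\setminus\{0\}$, producing a non-trivial rational dependence among $1,n\rho_1,n\rho_2$ and contradicting total irrationality. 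Thus $f$ is non-eventually annular, and combined with the small wandering domain hypothesis Theorem~A becomes applicable.

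Uniformly bounded rotational deviations means that \eqref{eq:v-bounded-deviation} holds for every $v\in\R^2\setminus\{0\}$, necessarily with $\rho=\scprod{\tilde\rho}{v}$ (as $(1/n)\scprod{\tilde f^n(z)-z}{v}$ converges both to $\rho$ and to $\scprod{\tilde\rho}{v}$). Applying Theorem~A to the two rational directions $v_1=(1,0)$ and $v_2=(0,1)$ yields continuous surjections $\pi_i\colon\T^2\to\T$ satisfying $\pi_i\circ f=R_{\rho_i}\circ\pi_i$ for $i=1,2$, where each $\rho_i$ is irrational by total irrationality of $\tilde\rho$.

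Finally, I would assemble these two factors into the map $\Pi\colon\T^2\to\T^2$ defined by $\Pi(z)=(\pi_1(z),\pi_2(z))$. It is continuous and satisfies $\Pi\circ f=T_{\tilde\rho}\circ\Pi$, where $T_{\tilde\rho}(x,y)=(x+\rho_1,y+\rho_2)$ is the translation by $\tilde\rho$ on $\T^2$. Since $\tilde\rho$ is totally irrational, $T_{\tilde\rho}$ is minimal; hence the compact $T_{\tilde\rho}$-invariant image $\Pi(\T^2)$ must equal $\T^2$, and $\Pi$ exhibits $f$ as a topological extension of the minimal rigid translation $T_{\tilde\rho}$. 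The substantive step is the first one, where non-eventual annularity has to be teased out of total irrationality; once that obstruction is cleared the corollary reduces to two applications of Theorem~A and the standard packaging of two transversal irrational circle factors into a toral one.
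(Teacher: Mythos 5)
Your proposal is correct and follows essentially the same route as the paper: apply Theorem~A to the directions $v=(1,0)$ and $v=(0,1)$ to extract two circle semi-conjugacies $h_1,h_2$ and assemble them into $H=(h_1,h_2)\colon\T^2\to\T^2$. You are somewhat more careful than the paper on two small points that it leaves implicit — spelling out why total irrationality of the rotation vector rules out eventual annularity, and noting that surjectivity of $H$ follows from minimality of the target translation — but these are routine, and the core argument is identical.
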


Another consequence of Theorem A is the following
\begin{corollary}
  \label{cor:Dehn-twist-isotopy-class}
  If $f\in\Homeo{}(\T^2)$ is isotopic to the $k$-Dehn twist $I_k$,
  with $k\neq 0$ (see \eqref{eq:Im-def} for definition), and has small
  wandering domains, then $f$ admits an irrational rotation as
  topological factor if and only if given any a lift
  $\tilde f\colon\R^2\carr$ of $f$, there exist $\rho\in\R\setminus\Q$
  and $C>0$ such that
  \begin{equation}
    \label{eq:bounded-vertical-rot-dev}
    \abs{\pr{2}\big(\tilde f^n(z)-z\big)-n\rho} \leq C, \quad\forall
    z\in\R^2,\ \forall n\in\Z,
  \end{equation}
  where $\pr{2} : \R^2\ni (x,y) \mapsto y$.
\end{corollary}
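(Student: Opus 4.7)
The plan is to derive both implications of Corollary~\ref{cor:Dehn-twist-isotopy-class} from Theorem~A applied with the vertical direction $v = (0,1)$. For the ``only if'' implication I would need to show that the rational slope vector produced by any irrational circle factor of $f$ is necessarily proportional to $(0,1)$, while for the ``if'' implication I would need to verify that the Dehn-twist hypothesis together with \eqref{eq:bounded-vertical-rot-dev} automatically rules out eventual annularity, which is the remaining hypothesis of Theorem~A that is not directly assumed.

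For the ``only if'' direction, suppose $h\colon\T^2\to\T$ is an irrational circle factor with $h\circ f = R_\alpha\circ h$. Lifting $h$ to $\tilde h\colon\R^2\to\R$ produces integers $p,q$ with $\tilde h(z+e_i)-\tilde h(z)$ equal to $p$ and $q$ respectively, and a real constant $\tilde\alpha$ satisfying $\tilde h\circ\tilde f \equiv \tilde h+\tilde\alpha$. Since $f$ is isotopic to $I_k$, every lift satisfies $\tilde f(z+(0,1)) = \tilde f(z)+(k,1)$; computing $\tilde h(\tilde f(z+(0,1)))$ in two ways then forces $pk=0$, hence $p=0$ as $k\neq 0$. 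The standard Jäger--Tal quasi-linearity argument (cf.\ \cite[Lemma 3.1]{JaegerTalIrratRotFact}) applied with $v=(0,q)$ would then yield \eqref{eq:bounded-vertical-rot-dev} with $\rho = \tilde\alpha/q \in \R\setminus\Q$.

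For the ``if'' direction, the plan is to apply Theorem~A with $v=(0,1)$: orientation preservation is automatic in the Dehn-twist isotopy class and small wandering domains is assumed, so only non-eventual annularity remains to be checked. Suppose, toward contradiction, that some iterate $f^n$ preserves an essential annulus $A\subset\T^2$, and let $(a,b)$ be the primitive homology class of its core curve. Since $(a,b)$ must be fixed up to sign by $f^n_* = \bigl(\begin{smallmatrix}1 & nk\\0 & 1\end{smallmatrix}\bigr)$, a short case analysis forces $b=0$ and $(a,b)=\pm(1,0)$, so $A$ is homologically horizontal. A connected component $\tilde A$ of its preimage in $\R^2$ is then a horizontal strip $\R\times[c,d]$ of finite height, and $\tilde f^n(\tilde A) = \tilde A+(0,j_0)$ for some $j_0\in\Z$; the horizontal part of $f^n_*$ is absorbed by the horizontal translation invariance of $\tilde A$, so iterating gives $\tilde f^{n\ell}(\tilde A) = \tilde A+(0,\ell j_0)$ for every $\ell\geq 1$. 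Combining the resulting bound $\abs{\pr{2}(\tilde f^{n\ell}(z)-z)-\ell j_0}\leq d-c$ with \eqref{eq:bounded-vertical-rot-dev} and letting $\ell\to\infty$ yields $\rho = j_0/n \in\Q$, contradicting $\rho\in\R\setminus\Q$. This non-eventual annularity step is the main technical content of the argument; once it is in place, Theorem~A directly delivers the desired irrational circle factor.
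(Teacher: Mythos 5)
Your overall strategy matches the paper's: identify the relevant direction as $(0,1)$ and show that the Dehn-twist hypothesis together with the irrationality of $\rho$ rules out eventual annularity, then apply Theorem~A. The ``only if'' computation (lifting $h$, deducing $p=0$ from $kp=0$ using $\tilde f(z+(0,1))=\tilde f(z)+(k,1)$, then invoking quasi-linearity of $\tilde h$) is correct and in fact fills in the detail that the paper leaves implicit behind the citation to Lemma~\ref{lem:non-triv-Kron-factor-implies-BRD}.

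There is, however, a genuine mismatch in the ``if'' direction. The paper defines \emph{eventually annular} via bounded deviations: there exist $m\in\N$, a lift $\tilde g$ of $f^m$, a rational-slope $v$ and $C>0$ with $\abs{\scprod{\tilde g^n(z)-z}{v}}\leq C$ for all $z,n$. Your argument instead assumes that some iterate $f^n$ preserves an open essential annulus $A\subset\T^2$ and derives a contradiction from that. These conditions are not interchangeable as stated: passing from the bounded-deviation definition to an invariant open essential annulus is a nontrivial theorem (of Koropecki--Tal type, through invariant essential annular continua), which you neither prove nor cite. As written, your contradiction only rules out annulus preservation, which is not the hypothesis Theorem~A asks you to verify.

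The fix is actually shorter if you work directly with the paper's definition. Suppose $f^m$ is annular with direction $v=(v_1,v_2)$ of rational slope. Writing $\tilde f^{mn}(z)-z=(I_k^{mn}-I_0)z+\Delta_{mn}(z)$ with $\Delta_{mn}$ a $\Z^2$-periodic (hence bounded) map, and noting $(I_k^{mn}-I_0)z=(mnk\,z_2,0)$, boundedness of $\scprod{\tilde f^{mn}(z)-z}{v}$ over $z_2\in\R$ forces $v_1=0$ (using $k\neq 0$), so $v=(0,\pm1)$. Comparing with \eqref{eq:bounded-vertical-rot-dev} (and correcting for the fixed integer translate between the chosen lift of $f^m$ and $\tilde f^m$) then gives $\rho\in\Q$, contradicting $\rho\in\R\setminus\Q$. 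This is what the paper's one-line remark is implicitly doing; and, as you rightly emphasize, the Dehn-twist hypothesis alone is not enough — the pure linear shear $I_k$ acting on $\T^2$ is annular, so the irrationality (in particular the non-vanishing) of $\rho$ in \eqref{eq:bounded-vertical-rot-dev} is essential.
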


Let us finish this introduction remarking that putting together our
results with some recent ones due to Hauser and
Jäger~\cite{HauserJaegerMonotMaxEq}, one can completely characterize
those periodic point free homeomorphisms of $\T^2$ that admits
non-trivial Kronecker factors. In fact, a Kronecker system is nothing
but a translation on a compact abelian topological group. Its mixed
algebro-topological nature endowed the system with a rich variety of
structures and allows us to combine different techniques (\eg metric
geometry, character theory) to study its dynamical and ergodic
properties. So, it is not surprising at all that the problem of
determining the existence of non-trivial Kronecker factors is a very
important one in topological and differentiable dynamics.

Since any closed subsystem (\ie the restriction of the dynamics to a
closed invariant subset) of a Kronecker one is (conjugate to a)
Kronecker itself, then in general one just considers minimal Kronecker
factors and by \emph{non-trivial Kronecker system} we mean a minimal
one which does not reduce to a point (see \cite[\S
2.6]{TaoPoincareLega} for a very nice exposition about Kronecker
systems).

As the reader may be expecting, any non-trivial Kronecker factor of a
torus homeomorphism is in fact a minimal torus translation and the
semi-conjugacy map cannot increases the dimension. However this result
is not completely straightforward and has just been proven by Hauser
and Jäger \cite{HauserJaegerMonotMaxEq} (see
Theorem~\ref{thm:classification-Kronecker-factors-T2}) in dimension
$2$ and by Edeko~\cite{EdekoEquicontFactorsFlows} in higher
dimensions. So, a non-trivial Kronecker factor of a $2$-torus
homeomorphism can be either an irrational circle rotation or a totally
irrational, \ie minimal, $2$-torus rotation; but the second case can
be performed as the combination of two ``transversal'' circle factors
(see \S\ref{sec:proofs-corollaries} for details), so this is the
reason we just focus on the existence of irrational circle factors.

\subsection{Strategy of the proofs for the main results}
\label{sec:strategy-proof-main-results}

In \S\ref{sec:proofs-corollaries} we prove Corollaries
\ref{cor:Jaeger-generalization} and \ref{cor:Dehn-twist-isotopy-class}
assuming Theorem A.

The proof of the main result of the paper, Theorem A, is performed in
\S\S \ref{sec:rho-centralized-skew} and \ref{sec:proof-thm-A}. In
\S\ref{sec:rho-centralized-skew} we introduce the
\emph{$\rho$-centralized skew-product}, which is a slight modification
of similar constructions we performed in \cite{KocRotDevPerPFree,
  KocMinimalHomeosNotPseudo}, but adapted to a more general
context. In that section we we make a very detailed study of the
dynamics of these $\rho$-centralized skew-products, showing the
following results: in Proposition~\ref{pro:F-orbit-bound-vert-dir} we
prove that an orbit of the original homeomorphism exhibits bounded
rotational deviations if and only if the corresponding orbits of the
$\rho$-centralized skew-product are bounded. Then, in
Theorem~\ref{thm:Omega-f-vs-Omega-F} we characterize the non-wandering
set of the $\rho$-centralized skew-product showing, for instance, that
this skew-product is non-wandering if and only if the original
homeomorphism if non-wandering. Then, in
Theorem~\ref{thm:separating-sets-orbits-of-open-sets} we study the
topology of open bounded connected invariant sets of the
$\rho$-centralized skew-product showing that, under the hypotheses of
Theorem A, their complements have two unbounded connected components
on each fiber. Finally, in \S~\ref{sec:proof-thm-A} we finish the
proof of Theorem A, showing that the boundaries of some bounded
connected open sets which are invariant under the $\rho$-centralized
skew-product can be used to construct the fibers of the semi-conjugacy
of the original homeomorphism.

\subsection*{Acknowledgments}
\label{sec:acknowledgements}

The author would like to thank Tobias Jäger and Andrés Koropecki for
several useful discussions and to the anonymous referees for their
criticisms that helped to improved the readability of the paper.

\section{Preliminaries and notations}
\label{sec:preliminaries-notations}

\subsection{General topological dynamics}
\label{sec:gener-topol-dynamics}

Throughout this article, $(M,d)$ will denote an arbitrary complete
metric space. The (open) ball of radius $r>0$ centered at $x\in M$
will be denoted by $B_r(x)$. Given any $A\subset M$, we write
$\partial_XA$ for the boundary of $A$ in $X$, and $\overline{A}$ for
its closure; its diameter is given by
\begin{displaymath}
  \diam A:=\sup\{d(x,y) : x,y\in A\}. 
\end{displaymath}
We say $A$ is bounded when $\diam A$ is finite.

If $A$ is connected, we write $\cc(M,A)$ for the connected component
of $M$ containing $A$. As usual, we write $\pi_0(M)$ to denote the set
of connected components of the space $M$.

The group of self-homeomorphisms of $M$ will be denoted by
$\Homeo{}(M)$. We shall write $\Homeo0(M)$ for the subgroup of
homeomorphisms of $M$ which are homotopic to the identity.

Given any $f\in\Homeo{}(M)$, we define its \emph{support} as the
closed set
\begin{equation}
  \label{eq:supp-f-def}
  \supp f :=\overline{\left\{x\in M : f(x)\neq x\right\}}. 
\end{equation}

When $M$ and $N$ are two topological spaces, we say that
$f\in\Homeo{}(M)$ is a \emph{topological extension} of
$g\in\Homeo{}(N)$ when there exists a surjective continuous map
$h\colon M\to N$ such that $h\circ f = g\circ h$. In such a case, we
say that $g$ is a \emph{topological factor} of $f$ and the map $h$ is
called a \emph{semi-conjugacy}. A \emph{fiber} of the semi-conjugacy
is the pre-image by $h$ of any point of $N$. The set of maps that are
a semi-conjugacy between $f$ and $g$ will be denoted by
$\mathcal{SC}(f,g)$, \ie it is defined by
\begin{equation}
  \label{eq:semi-conj-maps}
  \mathcal{SC}(f,g):=\left\{h'\in C^0(M,N) : h'(M)=N,\ h'\circ f =
    g\circ h'\right\}. 
\end{equation}

Whenever $M_1,M_2,\ldots, M_n$ are arbitrary sets, we shall use the
generic notation
$\pr{i}\colon M_1\times M_2\times\ldots\times M_n\to M_i$ to denote
the $i^\mathrm{th}$-coordinate projection map. Similarly, when $A$ is
a subset of $M_1\times M_2\times\ldots\times M_n$ and $x\in M_1$ is an
arbitrary point, we write $A_x$ to denote the \emph{``the fiber of $A$
  over $x$''} given by
\begin{equation}
  \label{eq:fiber-A-over-x}
  A_x:=\{(x_2,\ldots,x_n)\in M_2\times\ldots\times M_n :
  (x,x_2,\ldots,x_n)\in A\}. 
\end{equation}

\subsubsection{Recurrent and non-wandering points}
\label{sec:recurr-non-wandering}

Let $f\in\Homeo{}(M)$ be any homeomorphism. A point $x\in M$ is said
to be \emph{recurrent} when there exists an increasing sequence of
positive integers $(n_j)_{j\geq 1}$ such that $f^{n_j}(x)\to x$, as
$j\to\infty$. An open subset $U\subset M$ is said to be a
\emph{wandering set} (for $f$) when $f^n(U)\cap U=\emptyset$, for
every $n\in\Z\setminus\{0\}$. A point $x\in M$ is called
\emph{non-wandering} when no neighborhood of $x$ is wandering. The
\emph{non-wandering set} of $f$, \ie the set of non-wandering points,
shall be denoted by $\Omega(f)$; its complement, called the
\emph{wandering set} of $f$, will be denoted by
$\W(f):=M\setminus\Omega(f)$.

When $M$ is locally connected, each connected component of $\W(f)$ is
called a \emph{wandering domain.} Finally, we say $f$ is
\emph{non-wandering} when $\Omega(f)=M$. We will need a new notion
which is stronger than non-wandering:

\begin{definition}
  \label{def:non-wandering-recurrent}
  A homeomorphism $f\colon M\carr$ is said to be
  \emph{$\Omega$-recurrent} when for every open subset $U\subset M$
  satisfying $U\cap\Omega(f)\neq\emptyset$, there is $n\geq 1$ such
  that
  \begin{displaymath}
    U\cap f^{-n}(U)\cap\Omega(f)\neq\emptyset. 
  \end{displaymath}
\end{definition}

Our main interest in this notion is due to the following elementary
\begin{lemma}
  \label{lem:non-wandering-vs-recurrent-pts}
  Let $M$ be a complete metric space and $f\colon M\carr$ be an
  $\Omega$-recurrent homeomorphism.

  Then, recurrent points are dense within the set $\Omega(f)$.
\end{lemma}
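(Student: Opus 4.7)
The plan is a standard Baire category argument inside $\Omega(f)$, using $\Omega$-recurrence to produce dense open sets of "almost-returning" points.

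First I would observe that, since $\Omega(f)$ is closed in the complete metric space $M$, it is itself a complete metric space, hence a Baire space. For each $\varepsilon>0$ I define
\begin{displaymath}
  R_\varepsilon := \bigl\{x\in\Omega(f) : \exists\, n\geq 1,\ d(f^n(x),x)<\varepsilon\bigr\}.
\end{displaymath}
Continuity of $f$ makes $R_\varepsilon$ open in $\Omega(f)$, since the condition $d(f^n(x),x)<\varepsilon$ persists under small perturbations of $x$.

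The density of $R_\varepsilon$ is where the $\Omega$-recurrence hypothesis is used, and this is the main (though short) content of the argument. Given any nonempty relatively open set $U\cap\Omega(f)$ in $\Omega(f)$, I would shrink $U$ to an open subset $V\subset U$ with $\diam V<\varepsilon$ still intersecting $\Omega(f)$ (which is possible because $\Omega(f)$ is nonempty inside $U$ and the metric space is locally of arbitrarily small diameter). Applying Definition~\ref{def:non-wandering-recurrent} to $V$ yields some $n\geq 1$ and a point $x\in V\cap f^{-n}(V)\cap\Omega(f)$. Since $x$ and $f^n(x)$ both lie in $V$, we have $d(f^n(x),x)\leq\diam V<\varepsilon$, so $x\in R_\varepsilon\cap U$, proving density.

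By the Baire category theorem, $\bigcap_{k\geq 1} R_{1/k}$ is a dense $G_\delta$ in $\Omega(f)$. It remains to check that every point $x$ of this intersection is recurrent; this is the only slightly subtle point. For each $k$, pick $n_k\geq 1$ with $d(f^{n_k}(x),x)<1/k$. If the sequence $(n_k)$ is unbounded, a subsequence tends to infinity and $x$ is recurrent by definition. Otherwise $(n_k)$ takes only finitely many values, so some fixed $m\geq 1$ satisfies $d(f^m(x),x)<1/k$ for infinitely many $k$, forcing $f^m(x)=x$; then $x$ is periodic, hence trivially recurrent via the sequence $f^{jm}(x)=x\to x$. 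In either case the recurrent set contains a dense subset of $\Omega(f)$, completing the proof. I do not anticipate any real obstacle beyond the careful shrinking of $U$ to diameter below $\varepsilon$ and the dichotomy at the end.
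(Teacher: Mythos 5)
Your proof is correct, and it is a genuinely different route from the paper's. The paper proceeds by a direct nested construction: starting from a bounded neighborhood $U_0=U$ of a given non-wandering point, it repeatedly invokes $\Omega$-recurrence to produce a shrinking chain of open sets $\overline{U_k}\subset U_{k-1}\cap f^{-n_{k-1}}(U_{k-1})$ with $\diam U_k\to 0$ and $U_k\cap\Omega(f)\neq\emptyset$, and takes the unique point $y$ in $\bigcap_k \overline{U_k}$ as the desired recurrent point; the return times $n_k$ are then nondecreasing by minimality and $d(f^{n_k}(y),y)\to 0$. You instead run the Baire category theorem in the complete space $\Omega(f)$: the sets $R_\varepsilon$ are open, and $\Omega$-recurrence applied to a shrunken neighborhood of small diameter yields density; the intersection $\bigcap_k R_{1/k}$ is then a dense $G_\delta$ of recurrent points. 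The two arguments are cousins (the paper's nested construction is essentially a hands-on instance of the Baire argument), but your version is a bit more structured and even gives the slightly stronger conclusion that the recurrent points form a residual subset of $\Omega(f)$. You also make fully explicit the bounded-versus-unbounded dichotomy for the return times at the end, which the paper handles only implicitly via the remark that the $n_k$ are nondecreasing. No gaps; the argument is sound.
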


\begin{proof}
  Let $x$ be any non-wandering point and $U$ an arbitrary open
  neighborhood of $x$. Without loss of generality, we can assume $U$
  is bounded in $M$. Then we will inductively defined a sequence of
  nested open sets $\{U_k\}_{k\geq 0}$ and an increasing sequence of
  positive integers $\{n_k\}_{k\geq 0}$ as follows: first, let us
  define $U_0:=U$. Then, assuming $k\geq 1$ and $U_{k-1}$ has already
  been defined, we write
  \begin{equation}
    \label{eq:nk-time-def}
    n_{k-1}:=\min\left\{n\geq 1 : U_{k-1}\cap
      f^{-n}(U_{k-1})\cap\Omega(f)\neq\emptyset\right\}.
  \end{equation}
  Then, we define $U_k$ as any open set satisfying the following
  properties:
  \begin{gather*}
    \overline{U_k}\subset U_{k-1}\cap f^{-n_{k-1}}(U_{k-1}), \\
    U_k\cap\Omega(f)\neq\emptyset,\\
    \diam(U_k)<\diam(U_{k-1})/2.
  \end{gather*}

  Observe that, since $f$ is $\Omega$-recurrent, $U_{k-1}$ is open and
  $\Omega(f)$ is closed, such an open set $U_k$ does exists and the
  natural number given by \eqref{eq:nk-time-def} is well defined.
  
  By our hypothesis about the diameter of the sets $U_k$, it follows
  there is a unique point $y$ such that
  \begin{displaymath}
    \{y\}=\bigcap_{k=0}^\infty U_k = \bigcap_{k=0}^\infty
    \overline{U_k},
  \end{displaymath}
  and we claim $y$ is a recurrent point. In fact,
  $y\in U_{k+1}\subset U_k\cap f^{-n_k}(U_k)$, for every $k\geq 0$,
  and hence
  \begin{displaymath}
    d(f^{n_k}(y),y)\leq\diam U_k\to 0,\quad\text{as }  k\to\infty.  
  \end{displaymath}
  On the other hand, by the very same definition it holds
  $n_k\geq n_{k-1}$, for very $k\geq 1$. This implies $y$ is a
  recurrent point and $y\in U$. So, recurrent points are dense among
  non-wandering points.
\end{proof}

\subsubsection{Minimal systems}
\label{sec:minimal-systems}

Let $M$ be a compact metric space, $f\colon M\carr$ an arbitrary
homeomorphism and $K\subset M$ be an $f$-invariant nonempty compact
set. We say that $K$ is a \emph{minimal set} (for $f$) when $K$ and
the empty set are the only $f$-invariant compact subsets of $K$. When
$M$ itself is a minimal set, we say that $f$ is a \emph{minimal
  system} or a \emph{minimal homeomorphism.}

\subsubsection{Proximality relations}
\label{sec:proximality-relat}

If $f\colon M\carr$ is a homeomorphism of the complete metric space
$(M,d)$, we say that two points $x,y\in M$ are \emph{$f$-proximal}
when
\begin{equation}
  \label{eq:proximal-def}
  \inf\left\{d\big(f^n(x),f^n(y)\big) : n\geq 1\right\} = 0.
\end{equation}

Analogously we can define the notion of $f^{-1}$-proximality. Notice
that, in general, these two notions do not coincide.

\subsection{Orbits of open sets}
\label{sec:orbits-open-sets}

Let $M$ be an arbitrary locally connected complete metric space and
$f\colon M\carr$ be an arbitrary homeomorphism. Given any nonempty
connected open set $V$, following Koropecki and
Tal~\cite{KoroTalStricTor} we define
\begin{equation}
  \label{eq:U-eps-def}
  \U_f(V):=\cc\bigg(\bigcup_{n\in\Z} f^n\big(V\big),
  V\bigg), 
\end{equation}
where $\cc(\cdot,\cdot)$ denotes the connected component as defined at
the beginning of \S\ref{sec:gener-topol-dynamics}. Notice that there
exists $N\geq 1$ such that $f^N\big(\U_f(V)\big)=\U_f(V)$ if and only
if $V$ is not a wandering set.

\subsection{Kronecker factors}
\label{sec:kronecker-factors}

Let $(G,+)$ denote an abelian group. For each $a\in G$, let us
consider the translation $T_a\in\Homeo{}(G)$ given by
$T_a : G\ni g\mapsto g+a$. When $(G,+)$ is an abelian compact group,
any such map is called a \emph{Kronecker system.}

\begin{remark}
  \label{rem:translations-especial-notation}
  For the sake of simplicity of notations, given $E\subset G$ and
  $a\in G$ sometimes we shall write $E+a$ to denote $T_a(E)$.
\end{remark}

All orbit closures of a Kronecker system are (topologically conjugate
to) a Kronecker subsystem themselves. So, there is no significant loss
of generality just considering minimal Kronecker systems.

We will say $T_a\colon G\carr$ is a \emph{non-trivial Kronecker
  system} when $T_a$ is minimal and $G$ is not just a singleton.

By classical arguments one can easily show that any compact abelian
group $G$ admits a compatible distance $d_G$ which is invariant by any
translation, \ie every Kronecker system is an isometry of
$(G,d_G)$. In particular this implies that any Kronecker system is
equicontinuous (\ie the family of its iterates is equicontinuous) when
$G$ is endowed with an arbitrary compatible metric.

Reciprocally, it can be shown that any minimal equicontinuous
homeomorphism of an arbitrary compact metric space is topologically
conjugate to a minimal Kronecker system (see for instance
\cite[Proposition 2.6.7]{TaoPoincareLega} for details).

Given a homeomorphism of a compact metric space $f\colon M\carr$, a
\emph{Kronecker factor} of $f$ will be a Kronecker system
$T_a\colon G\carr$ which is a topological factor of $f$, \ie there
exists a surjective continuous map $h\colon M\to G$ satisfying
$h\circ f=T_a\circ h$.

We will need the following
\begin{definition}
  \label{def:Kronecker-equivalence}
  Let $M$ be a compact metric space and $f\colon M\carr$ be a
  homeomorphism. We say that two points $x,y\in M$ are \emph{Kronecker
    equivalent} for $f$ if their images coincides under any
  semi-conjugacy of a Kronecker factor, \ie for every Kronecker factor
  $T_a\colon G\carr$ of $f$ and any $h\in\mathcal{SC}(f,T_a)$, it
  holds $h(x)=h(y)$ (see \eqref{eq:semi-conj-maps} for this notation).

  On the other hand, we say that $x$ and $y$ are \emph{Kronecker
    separated} if and only if $h(x)\neq h(y)$, for every non-trivial
  Kronecker factor $T_a\colon G\carr$ and every
  $h\in\mathcal{SC}(f,T_a)$.
\end{definition}

\begin{remark}
  \label{rem:proximality-vs-Kronecker-equivalence}
  Notice that, if $f\colon M\carr$ is as in
  Definition~\ref{def:Kronecker-equivalence}, then two points
  $x,y\in M$ which are either $f$-proximal or $f^{-1}$-proximal are
  necessarily Kronecker equivalent.
\end{remark}

\subsection{Tori and torus homeomorphisms}
\label{sec:tori-torus-homeos}

We will always consider $\R^d$ endowed with the Euclidean inner
product $\scprod{v}{w}:=\sum_{i=1}^dv_iw_i$ and the induced Euclidean
norm $\norm{v}:=\sqrt{\scprod{v}{v}}$.

As usual, the $d$-torus $\R^d/\Z^d$ will be denoted by $\T^d$ and we
write $\pi\colon\R^d\to\T^d$ for the canonical quotient
projection. Notice we are using the same letter $\pi$ to the denote
the torus universal covering map independently of its dimension.

We shall always assume $\T^d$ endowed with the distance function
$d_{\T^d}$ given by
\begin{equation}
  \label{eq:d-Td-dist-def}
  d_{\T^d}(x,y):=\min\left\{\norm{\tilde x-\tilde y} : \tilde
    x\in\pi^{-1}(x),\ \tilde y\in\pi^{-1}(y)\right\}, \quad\forall
  x,y\in\T^d.
\end{equation}

An open set $D\subset\R^d$ is called a \emph{fundamental domain} for
the covering $\pi\colon\R^d\to\T^d$ when $\pi$ is injective on $D$ and
$\pi\big(\overline{D}\big)=\T^d$.

As a particular case of Kronecker systems we have torus translations
$T_\alpha\colon\T^d\carr$, with $\alpha\in\T^d$. By some abuse of
notation and for the sake of simplicity, if $\alpha\in\R^d$, we shall
just write $T_\alpha$ to denote $T_{\pi(\alpha)}$.

A vector $\alpha\in\R^d$ is called \emph{rational} when
$\alpha\in\Q^d$; otherwise, it is called \emph{irrational}. Moreover,
it is called \emph{totally irrational} when the translation
$T_\alpha\colon\T^d\carr$ is minimal.

It is well known that given any $f\in\Homeo{}(\T^d)$, there exists a
unique matrix $A_f\in\GL(d,\Z)$ such that the map
\begin{equation}
  \label{eq:Delta-f-definition}
  \Delta_{\tilde f}:=\tilde f-A_f\colon\R^d\to\R^d
\end{equation}
is $\Z^d$-periodic, for any lift $\tilde f\colon\R^d\carr$ of $f$; and
hence the function $\Delta_{\tilde f}$ can be considered as an element
of $C^0(\T^d,\R^d)$. The matrix $A_f$ is nothing but a matrix
representation of the action induced by $f$ on the first homology
group of $\T^d$, and one can easily check that two homeomorphisms
$f,g\in\Homeo{}(\T^d)$ are homotopic if and only if $A_f=A_g$.

Finally, given any $k\in\Z$, we write
\begin{equation}
  \label{eq:Im-def}
  I_k:=
  \begin{pmatrix}
    1 & k \\
    0 & 1
  \end{pmatrix}\in\SL(2,\R),
\end{equation}
and define
\begin{equation}
  \label{eq:Homeok-Td}
  \Homeo{k}(\T^2):=\left\{f\in\Homeo{}(\T^2) : A_f=I_k\right\}.
\end{equation}

\subsubsection{Rotation set and rotation vectors}
\label{sec:rot-set-rot-vect}

We write $\Homeo{0}(\T^d)$ to denote the group of torus homeomorphisms
which are homotopic to the identity.

After Misiurewicz and Ziemian \cite{MisiurewiczZiemian}, given any
lift $\tilde f\colon\R^d\carr$ of a homeomorphism $f\in\Homeo0(\T^d)$
one defines the \emph{rotation set of $\tilde f$} by
\begin{equation}
  \label{eq:rho-set-def}
  \rho(\tilde f)=\bigcap_{m\geq 0}
  \overline{\bigcup_{n\geq m}\bigg\{\frac{\tilde f^n(z)-z}{n} :
    z\in\R^d\bigg\}} = \bigcap_{m\geq 0}
  \overline{\bigcup_{n\geq m}\left\{\frac{\Delta_{\tilde f^n}(z)}{n} :
      z\in\T^d\right\}},
\end{equation}
where $\Delta_{\tilde f}$ is the displacement function given by
\eqref{eq:Delta-f-definition}.  It can be easily shown that the
rotation set $\rho(\tilde f)$ is always nonempty, compact and
connected.

For $d=1$, by classical Poincaré theory~\cite{Poincare1880memoire} of
circle homeomorphisms we know that $\rho(\tilde f)$ is always a
singleton and its class modulo $\Z$ depends just on $f$ and not on the
chosen lift. So, in such a case one can define
$\rho(f):=\pi(\rho(\tilde f))\in\T$.

In higher dimensions, in general, the rotation set is not
singleton. So, we will say that $f\in\Homeo{0}(\T^d)$ is a
\emph{pseudo-rotation} whenever $\rho(\tilde f)$ is a singleton for
some, and hence any, lift $\tilde f\colon\R^d\carr$ of $f$; and $f$ is
said to be a \emph{totally irrational pseudo-rotation} when
$\rho(\tilde f)$ reduces to a point which is a totally irrational one.
 
On the other hand, given an $f$-invariant Borel probability measure
$\mu$ one can define the \emph{$\mu$-rotation vector of $\tilde f$} by
\begin{equation}
  \label{eq:rho-mu-def}
  \rho_\mu(\tilde f):=\int_{\T^d}\Delta_{\tilde f}\dd\mu, 
\end{equation}
where $\Delta_{\tilde f}$ is the displacement function given by
\eqref{eq:Delta-f-definition}. By classical convexity arguments and
Birkhoff ergodic theorem it can be easily checked that
\begin{displaymath}
  \mathrm{Conv}\big(\rho(\tilde f)\big) =\left\{\rho_\mu(\tilde f) :
    \mu\in\M(f)\right\}, 
\end{displaymath}
where $\M(f)$ is the space of $f$-invariant Borel probability measures
and $\mathrm{Conv}(\cdot)$ denotes the convex hull operator.

For $d=2$, Misiurewicz and Ziemian~\cite[Theorem
3.4]{MisiurewiczZiemian} showed that the rotation set is convex
indeed, so it coincides with the set of rotation vectors of measures.

On the other hand, when $f\in\Homeo{k}(\T^2)$ with $k\neq 0$, one
cannot define the rotation set as above, but at least one can define
the \emph{vertical rotation set} as in \cite{DoeffRotMeasHomeos,
  DoeffMisiurewiczShearRotNum, ZanataPropVertRotInt}, given by
\begin{equation}
  \label{eq:rot-vertical-def}
  \rho_{\rm{V}}(\tilde f)=\bigcap_{m\geq 0}
  \overline{\bigcup_{n\geq m}\bigg\{\frac{\pr{2}\big(\tilde
      f^n(z)-z\big)}{n}: z\in\R^2\bigg\}}\subset\R,
\end{equation}
where $\tilde f\colon\R^2\carr$ is any lift of $f$. Analogously, for
any $\mu\in\M(f)$ one defines its \emph{vertical rotation number} by
\begin{displaymath}
  \rho_{\mu,\rm{V}}(\tilde f):= \int_{\T^2}\pr{2}\circ\Delta_{\tilde
    f}\dd\mu. 
\end{displaymath}

\subsubsection{Dimension two}
\label{sec:dimension-two}

Let us fix some especial notations for the two-dimensional
case. Given any $v=(a,b)\in\R^2$, we define $v^\perp:=(-b,a)$. A
vector $v\in\R^2\setminus\{0\}$ is said to have \emph{rational slope}
when there is $\lambda\in\R\setminus\{0\}$ such that
$\lambda v\in\Z^2$; and \emph{irrational slope} otherwise.

We write $\A:=\T\times\R$ for the open annulus. We consider the
covering maps $\tilde\pi\colon\R^2\to\A$ and $\hat\pi\colon\A\to\T^2$
given by the natural quotient projections
\begin{align}
  \label{eq:tilde-pi-def}
  \tilde\pi : \R^2\ni (\tilde x,\tilde y) &\mapsto \big(\tilde
                                            x+\Z,\tilde 
                                            y\big)\in\A,\\
  \label{eq:hat-pi-def}
  \hat\pi: \A\ni (x,\tilde y) &\mapsto \big(x,\tilde
                                y+\Z\big)\in\T^2. 
\end{align}
Observe that $\pi=\hat\pi\circ\tilde\pi\colon\R^2\to\T^2$.

We will always considered $\A$ endowed with the distance function
$d_\A$ given by
\begin{displaymath}
  d_\A(\hat z_0,\hat z_1) := \min\left\{\norm{\tilde z_1-\tilde z_0} :
    \tilde z_i\in\tilde\pi^{-1}(\hat z_i), \ i=0,1\right\}, 
\end{displaymath}
for every $\hat z_i\in\A$ and $i=0,1$.

For each $s\in\R$, we define $\hat T_s\in\Homeo0(\A)$ by
$\hat T_s : (x,\tilde y) \mapsto (x,\tilde y +s)$.

If $S$ is any surface, by a \emph{topological disk} in $S$ we mean an
open subset of $S$ which is homeomorphic to the unit disc
$\{z\in\R^2 : \norm{z}<1\}$. Analogously, a \emph{topological annulus}
is an open subset of $S$ which is homeomorphic to $\A$.

If $U\subset\T^2$ is an open nonempty set, let us consider the group
homomorphism induced by the inclusion on first homology groups
$i\colon H_1(U,\Z)\to H_1(\T^2,\Z)$. The set $U$ is said to be
\emph{inessential} when $i=0$, and \emph{essential} otherwise. The set
$U$ is called \emph{annular} when the image of $i$ has rank $1$ and
\emph{fully essential} when its rank is equal to $2$, \ie when $i$ is
surjective.

Analogously, a nonempty open set $U\subset\A$ is said to be
\emph{inessential} when the inclusion morphism
$i\colon H_1(U,\Z)\to H_1(\A,\Z)$ is identically zero; and it is said
to be \emph{annular} otherwise.

An arbitrary subset $E$ of either $\T^2$ or $\A$ is said to be
\emph{inessential} when there exists an inessential open set $U$
containing $E$; otherwise, is said to be \emph{essential.}

A compact connected subset $C\subset\T^2$ is called an \emph{annular
  continuum} when its complement $\T^2\setminus C$ is homeomorphic to
$\A$. On the other hand, we say that a set $C\subset\A$ is an
\emph{essential annular continuum} when it is compact, connected and
$\A\setminus C$ has exactly two connected components, which will be
denoted by $U^+(C)$ and $U^-(C)$, and they are characterized by the
fact that there is a real number $K>0$ such that
$\T\times (K,+\infty)\subset U^+(C)$ and
$\T\times (-\infty,-K)\subset U^-(C)$.

The \emph{filling} of an inessential open subset $U$ of $\T^2$ is
defined as the union of $U$ with all the inessential connected
components of its complement, and will be denoted by
$\Fill(U)$. Notice that $\Fill(U)$ is an open inessential subset
itself.

A connected open set $U\subset\T^2$ ($U\subset\A$, respectively) is
said to be \emph{lift-bounded} when every connected component of
$\pi^{-1}(U)$ ($\tilde\pi^{-1}(U)$, respectively) is bounded in
$\R^2$; and it is called \emph{lift-unbounded} otherwise. Notice that
every lift-bounded set is necessarily inessential, but there do exist
open inessential lift-unbounded subsets of $\T^2$ and $\A$.

The main reason why we are interested in the spaces $\Homeo{k}(\T^2)$
of homeomorphisms given by \eqref{eq:Homeok-Td} is given by the
following

\begin{proposition}
  \label{pro:periodic-point-free-homeos-conj-Hk}
  If $f\colon\T^2\carr$ is an orientation preserving fixed point free
  homeomorphism, then there exists a unique $k\in\Z$ such that $f$ is
  topologically conjugate to an element of $\Homeo{k}(\T^2)$.
\end{proposition}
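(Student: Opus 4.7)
The plan is to constrain the homological action $A_f \in \SL(2,\Z)$ via the Lefschetz fixed point theorem, reduce $A_f$ to integer Jordan form, and then realize the change-of-basis matrix as a linear self-homeomorphism of $\T^2$. Since $f$ preserves orientation, $\det A_f = +1$ and $f_*$ acts as the identity on $H_0(\T^2;\Z)$ and on $H_2(\T^2;\Z)$; hence the Lefschetz number equals $L(f) = 2 - \mathrm{tr}(A_f)$. Because $\T^2$ is a compact ANR and $f$ has no fixed points, Lefschetz's fixed point theorem forces $L(f) = 0$, so $\mathrm{tr}(A_f) = 2$ and the characteristic polynomial of $A_f$ is $(t-1)^2$. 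By Cayley--Hamilton, $(A_f - I)^2 = 0$.

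Setting $N := A_f - I$, I would then put $N$ in integer Jordan form. If $N = 0$, take $k = 0$; otherwise $N$ is a nonzero nilpotent matrix in $M_2(\Z)$ of rank one over $\Q$, with $\mathrm{Im}\,N = \ker N$ as $1$-dimensional $\Q$-subspaces. Let $v_1 \in \Z^2$ be a primitive generator of the rank-one subgroup $\ker N \cap \Z^2$ and extend it to a $\Z$-basis $\{v_1, v_2\}$ of $\Z^2$; such an extension exists precisely because $v_1$ is primitive. Then $N v_1 = 0$ and $N v_2 = k v_1$ for a unique nonzero $k \in \Z$, so in this basis $A_f$ is represented by $I_k$. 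Taking $P := [v_1 \mid v_2] \in \GL(2,\Z)$, one obtains $P^{-1} A_f P = I_k$, and $P$ descends to a linear homeomorphism $h_P \colon \T^2 \to \T^2$ with $A_{h_P} = P$, so that $g := h_P^{-1} \circ f \circ h_P \in \Homeo{k}(\T^2)$ is topologically conjugate to $f$.

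For uniqueness, if $f$ were also conjugate to some $g' \in \Homeo{k'}(\T^2)$ via $h'$, then $A_{h'} \in \GL(2,\Z)$ would conjugate $I_k$ to $I_{k'}$; since any such conjugating matrix must preserve the unique one-dimensional $I_k$-fixed subspace of $\R^2$, it is upper triangular in the standard basis, and a direct computation yields $k' = k$ whenever $A_{h'} \in \SL(2,\Z)$. This is where I expect the main obstacle to lie: the matrices $I_k$ and $I_{-k}$ are conjugate in $\GL(2,\Z)$ via the determinant-$(-1)$ matrix $\mathrm{diag}(-1, 1)$, so without restricting to orientation-preserving conjugations the integer $k$ is only determined up to sign. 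I would therefore read the proposition as implicitly requiring an orientation-preserving conjugation, which makes $A_{h'}$ lie in $\SL(2,\Z)$ and renders $k$ unique.
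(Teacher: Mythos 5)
Your existence argument follows essentially the same route as the paper: apply Lefschetz to force $\mathrm{tr}(A_f)=2$, observe that $1$ is the only eigenvalue of $A_f$, pick a primitive integer vector generating $\ker(A_f-I)$, complete it to a $\Z$-basis, and conjugate by the induced linear automorphism of $\T^2$. Your Cayley--Hamilton phrasing is a minor cosmetic variation; the content is identical.

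Where you go beyond the paper is in the uniqueness discussion, and your scepticism there is well placed. The paper asserts uniqueness of $k$ but its proof addresses only existence, and as you correctly point out, $\mathrm{diag}(-1,1)^{-1} I_k\,\mathrm{diag}(-1,1) = I_{-k}$, so $I_k$ and $I_{-k}$ are $\GL(2,\Z)$-conjugate. Consequently if ``topologically conjugate'' is read in the unrestricted sense, $k$ is determined only up to sign. Your fix --- requiring the conjugating homeomorphism to be orientation preserving, so that its homological matrix lies in $\SL(2,\Z)$, which (for $k\neq 0$) forces the conjugator to be upper triangular with equal diagonal entries $\pm 1$ and hence fixes the sign of $k$ --- is the correct way to make the uniqueness claim literally true, and it matches what the proposition really needs for the rest of the paper. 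In short: your proof is correct, matches the paper's strategy for existence, and supplies the missing (and slightly delicate) uniqueness argument with the appropriate caveat.
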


\begin{proof}
  This is a straightforward consequence of Lefschetz fixed point
  theorem. In fact, if $f$ is orientation preserving and has no fixed
  point, then it must hold
  \begin{displaymath}
    0=L(f)=\sum_{i=0}^2(-1)^i\mathrm{tr}(f_{\star,i}\colon
    H_i(\T^2,\Q)\carr) = 2 -  \mathrm{tr}(f_{\star,1}\colon
    H_1(\T^2,\Q)\carr),
  \end{displaymath}
  where $L(f)$ denotes the Lefschetz number of $f$. But the matrix
  $A_f\in\SL(2,\Z)$ is a representative of $f_{\star,1}$, and hence,
  $1$ is the only eigenvalue of the matrix $A_f$. If $A_f=I_0$, \ie is
  $A_f$ is the identity, we are done; if not, $\ker(A_f-I_0)$ is a
  one-dimensional space, and since the matrix $A_f-I_0$ has integer
  coefficients, this space is generated by a vector $(a,c)\in\Z^2$,
  with $\gcd(a,c)=1$. So, there exists $b,d\in\Z$ such that $ad-bc=1$
  and this implies the matrix
  \begin{displaymath}
    B=
    \begin{pmatrix}
      a & b\\
      c & d
    \end{pmatrix}\in\SL(2,\Z),
  \end{displaymath}
  and $B^{-1}A_fB=I_k$, for some $k\in\Z$. Then, the linear map $B$
  induces (\ie is the lift to $\R^2$ of) a Lie group automorphism
  $\bar B\colon\T^2\carr$, and one can easily check that
  $\bar B^{-1}\circ f\circ \bar B\in\Homeo{k}(\T^2)$.
\end{proof}

Let us recall a classical result about fixed point free plane
homeomorphisms due to Brouwer:

\begin{theorem}[Brouwer's translation theorem, see
  \cite{FathiOrbCloBrouwer}]
  \label{thm:Brouwer}
  Let $f\colon\R^2\carr$ be an orientation preserving homeomorphism
  such that $\Fix(f)=\emptyset$. Then, every $x\in\R^2$ is wandering
  for $f$, \ie $\Omega(f)=\emptyset$.
\end{theorem}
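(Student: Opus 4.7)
My plan is to prove that each $x\in\R^2$ is wandering by constructing a \emph{Brouwer line} through $x$: a properly embedded topological line $L\colon\R\hookrightarrow\R^2$ whose image separates the plane into two open components $R^+$ and $R^-$, with $f(L)\subset R^+$ (equivalently $f^{-1}(L)\subset R^-$). Once such an $L$ is available, the iterates $\{f^n(L)\}_{n\in\Z}$ stratify the plane into pairwise disjoint open strips, and wandering of $x$ then follows from an elementary geometric argument.

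The first step is Brouwer's translation arc lemma: for each $x\in\R^2$ there exists an arc $\alpha$ from $x$ to $f(x)$ which is \emph{free}, meaning $f(\alpha)\cap\alpha=\{f(x)\}$. I would prove this by starting with a very small initial arc from $x$ contained in a neighborhood on which $\norm{f(z)-z}$ has a positive lower bound (such a neighborhood exists because $\Fix(f)=\emptyset$ and $f$ is continuous), and then extending the arc towards $f(x)$ in stages, using local injectivity of $f$ together with the absence of fixed points to preserve freeness at each extension.

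The second and main step is to promote $\alpha$ to a Brouwer line. The natural candidate is $L_0:=\bigcup_{n\in\Z}f^n(\alpha)$; by freeness, consecutive arcs $f^n(\alpha)$ and $f^{n+1}(\alpha)$ share only a single endpoint, so $L_0$ is locally a $1$-manifold. The delicate point is to show that $L_0$ is properly embedded in $\R^2$ (no self-accumulation) and that $f(L_0)$ lies on one coherent side of $L_0$. Both facts hinge on the orientation-preserving hypothesis: a local index computation of the plane field $z\mapsto f(z)-z$ along $L_0$ excludes spiralling and forces coherence of sides. This is the classical content of Brouwer's plane translation theorem, for which I would refer to \cite{FathiOrbCloBrouwer} for a complete modern proof.

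Given a Brouwer line $L$ through $x$, let $S$ denote the open strip bounded by $f^{-1}(L)$ and $f(L)$; then $x\in S$. Since the three points $f^{-1}(x),x,f(x)$ are pairwise distinct (because $\Fix(f)=\emptyset$), one can choose an open neighborhood $U$ of $x$ with $\overline{U}\subset S$ and small enough that $U$, $f(U)$ and $f^{-1}(U)$ are pairwise disjoint. For every $n\in\Z$ with $\abs{n}\geq 2$, the image $f^n(U)$ lies in the strip bounded by $f^{n-1}(L)$ and $f^{n+1}(L)$, which is disjoint from $S$, whence $f^n(U)\cap U=\emptyset$; combined with the disjointness for $n=\pm 1$, we conclude $f^n(U)\cap U=\emptyset$ for every $n\neq 0$, and so $x\in\W(f)$. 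The main obstacle throughout is step two, namely the rigorous construction of the Brouwer line $L_0$ from $\alpha$, which is where all the subtle plane topology and the orientation-preserving hypothesis are really used.
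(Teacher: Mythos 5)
There is a genuine gap in step two that breaks the whole chain. The set $L_0:=\bigcup_{n\in\Z}f^n(\alpha)$ is $f$-\emph{invariant}: since $f(L_0)=\bigcup_{n\in\Z}f^{n+1}(\alpha)=L_0$, one has $f(L_0)=L_0$, not $f(L_0)\subset R^+$. So $L_0$ can never be a Brouwer line in the sense you need (a properly embedded line $L$ with $f(L)\cap L=\emptyset$ and $f(L)$ contained in one complementary side). It is what the classical literature calls a \emph{translation line}: the orbit of a translation arc, on which $f$ acts as a fixed-point-free homeomorphism of $\R$. Consequently the picture in your final paragraph collapses: the family $\{f^n(L)\}_{n\in\Z}$ does not consist of pairwise disjoint curves bounding nested strips, it is a single invariant curve, and the ``strip'' $S$ bounded by $f^{-1}(L)$ and $f(L)$ is empty. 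You have conflated two distinct objects, the invariant translation line and the Brouwer line, and the disjoint-strips argument requires the latter.

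Two ways to repair this. One, if you insist on producing an honest Brouwer line through $x$, this is the Brouwer \emph{plane} translation theorem, which is a genuinely deeper statement than ``every point is wandering,'' and its proof (e.g.\ in \cite{FathiOrbCloBrouwer}, or Franks' or Guillou's treatments) is not a matter of ``coherence of sides of $L_0$''; you would be deriving the weaker theorem as a corollary of the harder one, which is legitimate but not what your sketch actually does. Two, the more economical classical route stays with the translation arc and proves wandering directly: Brouwer's free-arc lemma asserts that a translation arc $\alpha$ satisfies $f^n(\alpha)\cap\alpha=\emptyset$ for all $|n|\geq 2$, and this, combined with the freeness at $|n|=1$, already forces any small free disc around $x$ to be wandering. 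Franks' ``no periodic free disc chain'' index argument is the modern way to organize exactly this. Your first step (existence of a translation arc) is sound in outline, but as written the proposal never reaches a valid wandering argument because the central object in step two does not have the property you attribute to it.
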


If $f\in\Homeo{}(\T^2)$ and $x\in\Omega(f)$, following Koropecki and
Tal~\cite{KoroTalStricTor} we say that $x$ is \emph{inessential} when
there exists $\eps>0$ such that the open set
$\U_f\big(B_\eps(x)\big)$, given by \eqref{eq:U-eps-def}, is
inessential; otherwise is said to be an \emph{essential point.}
Moreover, $x$ is said to be a \emph{fully essential point} when
$\U_f\big(B_\eps(x)\big)$ is fully essential, for every $\eps>0$.

We have the following results for periodic point free homeomorphisms:

\begin{proposition}
  \label{pro:ppf-homeos-no-inessential-points}
  If $f\colon\T^2\carr$ is a periodic point free homeomorphism and
  $x\in\Omega(f)$, then $x$ is an essential point.
\end{proposition}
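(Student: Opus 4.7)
The plan is to argue by contradiction using Brouwer's translation theorem (Theorem~\ref{thm:Brouwer}). Assume $x\in\Omega(f)$ is inessential; then by definition there exists $\eps>0$ such that $U:=\U_f(B_\eps(x))$ is an inessential connected open subset of $\T^2$. Since $x\in\Omega(f)$, the ball $B_\eps(x)$ is not a wandering set, so by the observation in \S\ref{sec:orbits-open-sets} there exists $N\geq 1$ with $f^N(U)=U$. Moreover, as $f$ is periodic point free, $f^N$ has no fixed points in $\T^2$.

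Next, I exploit the inessentiality of $U$ at the level of the universal cover. Since $U$ is connected, the morphism $\pi_1(U)\to\pi_1(\T^2)=\Z^2$ induced by inclusion coincides with the corresponding morphism on $H_1$ (both groups being abelian), and by hypothesis it is the zero map. Elementary covering space theory then gives that for each connected component $\tilde U$ of $\pi^{-1}(U)$, the restriction $\pi|_{\tilde U}\colon\tilde U\to U$ is a homeomorphism; equivalently, the stabilizer of $\tilde U$ inside the deck transformation group $\Z^2$ is trivial. Fix one such $\tilde U$ and pick a lift $\tilde x\in\tilde U$ of $x$. Since $f^N(U)=U$, any lift of $f^N$ permutes the components of $\pi^{-1}(U)$, so after composing with a suitable deck transformation we can choose a lift $g\colon\R^2\carr$ of $f^N$ with $g(\tilde U)=\tilde U$.

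Now I claim $\tilde x\in\Omega(g)$. Let $\tilde W\subset\tilde U$ be an arbitrary open neighborhood of $\tilde x$; then $\pi(\tilde W)$ is a neighborhood of $x$ in $\T^2$. As $\Omega(f^N)=\Omega(f)\ni x$, there exists $k\geq 1$ and $\tilde y\in\tilde W$ such that $f^{Nk}(\pi(\tilde y))\in\pi(\tilde W)$, so $g^k(\tilde y)=\tilde z+m$ for some $\tilde z\in\tilde W$ and $m\in\Z^2$. Since $g(\tilde U)=\tilde U$, we have $g^k(\tilde y)\in\tilde U$, and also $\tilde z+m\in(\tilde U+m)\cap\tilde U$. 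By the triviality of the stabilizer of $\tilde U$ established above, necessarily $m=0$, hence $g^k(\tilde W)\cap\tilde W\neq\emptyset$. This proves $\tilde x\in\Omega(g)$.

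On the other hand, $g$ is orientation preserving (as $f$ is) and $\Fix(g)=\emptyset$: any fixed point $\tilde z$ of $g$ would project to a fixed point of $f^N$ in $\T^2$, contradicting that $f$ is periodic point free. Brouwer's translation theorem then forces $\Omega(g)=\emptyset$, contradicting $\tilde x\in\Omega(g)$. The main conceptual step is the passage from inessentiality of $U$ to the triviality of the stabilizer of its lifts, which is what prevents the recurrence of $x$ from being ``absorbed'' by a nontrivial deck translation and therefore reduces the problem to Brouwer's theorem on the plane.
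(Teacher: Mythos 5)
Your argument reduces, as the paper's does, to Brouwer's translation theorem, but the reduction goes through the universal cover rather than through the filling $\Fill(\U_f(V))$. The paper observes that $\Fill(\U_f(V))$ is an $f^N$-invariant open subset of $\T^2$ homeomorphic to $\R^2$ carrying a non-wandering point, and applies Brouwer to a power of $f$ restricted there; you instead note that inessentiality of $U=\U_f(B_\eps(x))$ forces $\pi_1(U)\to\pi_1(\T^2)$ to vanish, so a lift $\tilde U\subset\R^2$ has trivial stabilizer in $\Z^2$, and you apply Brouwer on all of $\R^2$ to the lift $g$ of $f^N$ fixing $\tilde U$. Your route avoids having to know that the filling is a topological disk, at the cost of the deck-transformation bookkeeping. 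Both are legitimate, and the conceptual kernel — translate an inessential non-wandering point into a non-wandering point for a fixed-point-free planar homeomorphism — is the same.

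Two points in your write-up need repair. First, ``$\Omega(f^N)=\Omega(f)$'' is not a general fact about homeomorphisms of compact spaces (only $\Omega(f^N)\subset\Omega(f)$ holds unconditionally). What you actually need, and what \emph{is} true in this specific situation, is $x\in\Omega(f^N)$: the sets $U,f(U),\dots,f^{N-1}(U)$ are pairwise disjoint connected components of $\bigcup_{n\in\Z}f^n(B_\eps(x))$ with $f^N(U)=U$, so for a small neighborhood $W\subset B_\eps(x)$ of $x$ any return $f^n(W)\cap W\neq\emptyset$ forces $f^n(U)\cap U\neq\emptyset$ and hence $N\mid n$; this is the step that must be spelled out before invoking $g^k(\tilde y)=\tilde z+m$. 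Second, the proposition as stated does not assume $f$ is orientation preserving, so the remark ``$g$ is orientation preserving (as $f$ is)'' is unjustified; this is precisely why the paper's proof passes to $f^{2N}$ before invoking Brouwer, and your argument needs a corresponding adjustment (replacing $g$ by $g^2$, together with the appropriate refinement of the recurrence-time argument) or else should explicitly add the orientation-preserving hypothesis, which is harmless since everywhere the proposition is used in the paper $f$ belongs to $\Homeo{k}(\T^2)$.
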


\begin{proof}
  This is an easy consequence of Theorem~\ref{thm:Brouwer}. In fact,
  if $x\in\Omega(f)$ and there is an open neighborhood $V$ of $x$ such
  that $\U_f(V)$ is inessential, hence the filling
  $\Fill\big(\U_f(V)\big)$, as defined in \ref{sec:dimension-two}, is
  $f^N$-invariant, for some $N\in\N$. Notice that
  $\Fill\big(\U_f(V)\big)$ is homeomorphic to $\R^2$ and
  $f^N|_{\Fill\big(\U_f(V)\big)}\colon\Fill\big(\U_f(V)\big)\carr$
  exhibits a non-wandering point (because $x\in\Omega(f)\cap
  V$). Thus, by Brouwer's translation theorem
  (Theorem~\ref{thm:Brouwer}), $f^{2N}$ has a fixed point in
  $\Fill\big(\U_f(V)\big)$, contradicting the fact that $f$ is
  periodic point free.
\end{proof}

\begin{proposition}
  \label{pro:lift-bounded-iness-wandering-domains-ppf-homeos}
  If $f\colon\T^2\carr$ is a periodic point free homeomorphism and
  $W\subset\T^2$ is a lift-bounded wandering domain (\ie $W$ is a
  connected component of $\W(f)$), then
  \begin{displaymath}
    f^n(W)\cap W=\emptyset,\quad\forall n\in\Z\setminus\{0\}. 
  \end{displaymath}
\end{proposition}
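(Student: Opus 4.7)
The plan is to argue by contradiction via Brouwer's translation theorem (Theorem~\ref{thm:Brouwer}). Since $f$ and $f^{-1}$ preserve $\W(f)$, any iterate $f^n$ sends connected components of $\W(f)$ onto connected components of $\W(f)$. Hence, if $f^n(W)\cap W\neq\emptyset$ for some $n\neq 0$, the connected set $f^n(W)$ is contained in the component of $\W(f)$ meeting $W$, which must be $W$ itself; the same observation applied to $f^{-n}$ yields $f^n(W) = W$. Replacing $n$ by $-n$ if necessary, it thus suffices to rule out the existence of some integer $n\geq 1$ with $f^n(W) = W$.

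Assume by contradiction that such an $n$ exists. Fix a lift $\tilde f\colon\R^2\carr$ of $f$ and a connected component $\tilde W$ of $\pi^{-1}(W)$; by the lift-boundedness hypothesis, $\tilde W$ is bounded. Since $\tilde W$ is bounded, no non-zero integer translation preserves it, and therefore the connected components of $\pi^{-1}(W)$ are exactly $\{\tilde W+v : v\in\Z^2\}$. The identity $\pi\circ\tilde f^n = f^n\circ\pi$ together with $f^n(W) = W$ gives $\tilde f^n(\pi^{-1}(W)) = \pi^{-1}(W)$, so $\tilde f^n$ permutes these components. In particular, there exists $v\in\Z^2$ with $\tilde f^n(\tilde W) = \tilde W + v$.

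Consider $g := T_{-v}\circ\tilde f^n$, an orientation preserving lift of $f^n$ satisfying $g(\tilde W) = \tilde W$. Any fixed point $z$ of $g$ would yield $\tilde f^n(z) = z+v$, forcing $f^n(\pi(z)) = \pi(z)$, which contradicts the hypothesis that $f$ is periodic point free. Therefore $\Fix(g) = \emptyset$, and Brouwer's translation theorem gives $\Omega(g) = \emptyset$.

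The contradiction now follows by a compactness argument. Picking any $z\in\tilde W$, the forward $g$-orbit stays within the bounded set $\tilde W$, so we can extract a subsequence $g^{k_j}(z)\to y\in\overline{\tilde W}$ with $k_j\to\infty$. For any neighborhood $U$ of $y$ we find $j_1 < j_2$ with $g^{k_{j_1}}(z), g^{k_{j_2}}(z)\in U$, and then $g^{k_{j_2}-k_{j_1}}(U)\cap U\neq\emptyset$, witnessing that $y\in\Omega(g)$ and contradicting $\Omega(g) = \emptyset$. The crux of the argument is the lift step: identifying the correct translation $v$ and exploiting lift-boundedness to produce, from the hypothetical invariance $f^n(W)=W$, a bounded $g$-invariant open region in $\R^2$ for a fixed point free orientation preserving plane homeomorphism, which is forbidden by Brouwer.
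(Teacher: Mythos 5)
Your proof follows the same route as the paper: reduce to the case $f^n(W)=W$, pass to a lift $g$ of $f^n$ preserving a bounded connected component $\tilde W$ of $\pi^{-1}(W)$, and invoke Brouwer's translation theorem (Theorem~\ref{thm:Brouwer}) to contradict the periodic-point-free hypothesis. Most of the details you supply are correct and in fact more explicit than the paper's version: the reduction from $f^n(W)\cap W\neq\emptyset$ to $f^n(W)=W$, the identification of the components of $\pi^{-1}(W)$ with the $\Z^2$-translates of $\tilde W$ (which uses lift-boundedness to guarantee these translates are pairwise distinct), and the compactness argument producing a non-wandering point for $g$ in $\overline{\tilde W}$.

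There is, however, one genuine gap. You assert that $g=T_{-v}\circ\tilde f^n$ is orientation preserving, but the proposition does not assume $f$ itself preserves orientation. If $f$ is orientation reversing and $n$ is odd, then $\tilde f^n$, and hence $g$, is orientation reversing, and Theorem~\ref{thm:Brouwer} as stated does not apply. This is exactly why the paper applies Brouwer's theorem to $G^2$ rather than to $G$: the square of any homeomorphism is automatically orientation preserving. The same repair works in your setup: $g^2$ is orientation preserving, is a lift of $f^{2n}$, and still satisfies $g^2(\tilde W)=\tilde W$, so $\Fix(g^2)=\emptyset$ by the periodic-point-free hypothesis, while your compactness argument applied to $g^2$ on the bounded invariant set $\tilde W$ yields $\Omega(g^2)\neq\emptyset$, giving the desired contradiction. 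With this squaring step inserted, the argument is complete.
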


\begin{proof}
  Let us suppose there exists $n\in\Z\setminus\{0\}$ such that
  $f^n(W)\cap W\neq\emptyset$. Since $W$ is a connected component of
  an $f$-invariant set, this implies that $f^n(W)=W$. So, if
  $\tilde W$ is a connected component of $\pi^{-1}(W)\subset\R^2$,
  then there exists a homeomorphism $G\colon\R^2\carr$ which is a lift
  of $f^n$ and such that $G(\tilde W)=\tilde W$. Since $\tilde W$ is
  bounded in $\R^2$, this implies that $\Omega(G)$ is nonempty and,
  invoking Brouwer's translation theorem we conclude that $G^2$ has a
  fixed point, contradicting the fact that $f$ is periodic point free.
\end{proof}

Motivated by this result, we propose the following

\begin{definition}
  \label{def:small-wandering-domains}
  We say that a homeomorphism $f\colon\T^2\carr$ exhibits \emph{small
    wandering domains} when every connected component of the wandering
  set $\W(f)$ is lift-bounded and, given any $\delta>0$, there exist
  just finitely many connected components with diameter larger that
  $\delta$, \ie the following set
  \begin{displaymath}
    \left\{W\in\pi_0\big(\W(f)\big) :
      \diam(W)\geq\delta\right\} 
  \end{displaymath}
  is finite, for every $\delta>0$.
\end{definition}

From this geometric property of the wandering set we get the following
dynamical consequence:
\begin{proposition}
  \label{pro:small-wand-domains-implies-Omega-recurr}
  If $f\in\Homeo{}(\T^2)$ is periodic point free and exhibits small
  wandering domains, then it is $\Omega$-recurrent (see
  Definition~\ref{def:non-wandering-recurrent}).
\end{proposition}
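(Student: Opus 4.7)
The plan is to argue by contradiction. Suppose there is an open $U\subset\T^2$ with $U\cap\Omega(f)\neq\emptyset$ and $U\cap f^{-n}(U)\cap\Omega(f)=\emptyset$ for every $n\geq 1$. Fix $x\in U\cap\Omega(f)$ and a decreasing sequence of balls $V_k=B_{r_k}(x)\subset U$ with $r_k\downarrow 0$. Since $x$ is non-wandering, for each $k$ one obtains $y_k\in V_k$ and $n_k\geq 1$ with $f^{n_k}(y_k)\in V_k$; by our contradiction assumption $y_k\notin\Omega(f)$, so $y_k$ lies in some connected component $W_k\in\pi_0(\W(f))$. A direct continuity argument shows $n_k\to\infty$: otherwise a subsequence has $n_k=n$ constant, and $y_k\to x$ together with $f^n(y_k)\in V_k\to\{x\}$ would force $f^n(x)=x$, contradicting periodic point freeness.

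The next step is to control diameters. Proposition~\ref{pro:lift-bounded-iness-wandering-domains-ppf-homeos} asserts that the $f$-orbit $\{f^n(W)\}_{n\in\Z}$ of any wandering domain $W$ consists of pairwise distinct members of $\pi_0(\W(f))$; combined with the small wandering domain hypothesis this yields $\diam f^n(W)\to 0$ as $|n|\to\infty$. After passing to a subsequence, one of the following configurations occurs: (I) $W_k=W$ is constant and $\diam f^{n_k}(W)\to 0$; (II) both $\{W_k\}$ and $\{f^{n_k}(W_k)\}$ are pairwise distinct, whence $\diam W_k\to 0$ and $\diam f^{n_k}(W_k)\to 0$; (III) the $W_k$ are pairwise distinct while $f^{n_k}(W_k)=W'$ is constant, so $W_k=f^{-n_k}(W')$ and $\diam W_k\to 0$.

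In configurations (I) and (II) I would select an auxiliary point $p_k\in\overline{W_k}\cap\Omega(f)$ with $p_k\to x$: take $p_k=x$ when $x\in\overline{W_k}$, and otherwise the first exit point from $\overline{W_k}$ of the segment joining $y_k$ to $x$, which lies in $\partial W_k\subset\Omega(f)$ (distinct components of the open set $\W(f)$ cannot share boundary points that belong to any wandering domain). Then $f^{n_k}(p_k)\in f^{n_k}(\overline{W_k})=\overline{f^{n_k}(W_k)}$, a set of diameter tending to zero that also contains $f^{n_k}(y_k)\to x$; so $f^{n_k}(p_k)\to x$. For $k$ large this places $p_k\in U\cap\Omega(f)$ with $f^{n_k}(p_k)\in U$, i.e.\ $p_k\in U\cap f^{-n_k}(U)\cap\Omega(f)$, contradicting our assumption on $U$. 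In configuration (III) the symmetric argument with $f^{-1}$ applies: $f^{n_k}(y_k)\in W'$ together with $f^{n_k}(y_k)\to x$ gives $x\in\overline{W'}$, so $z_k:=f^{-n_k}(x)\in f^{-n_k}(\overline{W'})=\overline{W_k}$ tends to $x$ (since $\diam\overline{W_k}\to 0$ and $y_k\in\overline{W_k}$ converges to $x$); then $z_k\in U\cap\Omega(f)$ for large $k$ while $f^{n_k}(z_k)=x\in U$, again producing the required contradiction.

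The main conceptual obstacle is locating the correct auxiliary return point in $\Omega(f)$: the non-wandering point $x$ need not be recurrent on its own, but the small wandering domain hypothesis forces the shrinking wandering domains $W_k$ to carry boundary points $p_k\in\partial W_k\subset\Omega(f)$ arbitrarily close to $x$, and Proposition~\ref{pro:lift-bounded-iness-wandering-domains-ppf-homeos} then guarantees that $f^{n_k}$ compresses $\overline{W_k}$ to a vanishingly small set near the already-known return point $f^{n_k}(y_k)\to x$. Once this geometric picture is in place, the contradiction follows immediately from the continuity of $f^{\pm n_k}$ on the closures of these shrinking wandering domains.
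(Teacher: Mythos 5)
Your proof is correct, and it takes a genuinely different route from the paper's. The paper fixes a single ball $B_\eps(z)$ around a non-wandering point $z$ with $\overline{B_\eps(z)}\subset U$, shows every return $f^n\big(B_\eps(z)\big)\cap B_\eps(z)$ with $n\neq 0$ lies entirely in $\W(f)$, and then argues that any wandering component touched by such a return must belong to the finite family $\mathscr{W}_U(z,\eps)$ of components meeting $B_\eps(z)$ whose closures escape $U$ (a component $D$ with $\overline D\subset U$ cannot appear, since $\partial D\subset\Omega(f)$ would violate the contradiction hypothesis); because each wandering domain can be hit only once (Proposition~\ref{pro:lift-bounded-iness-wandering-domains-ppf-homeos}), this bounds the return-time set $\tau$, contradicting $z\in\Omega(f)$. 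You instead work with a shrinking family of balls $V_k$, and convert the small-wandering-domain hypothesis into the quantitative statement $\diam f^n(W)\to 0$ as $|n|\to\infty$ along the orbit of any wandering domain $W$. This lets you push the wandering return points $y_k$ onto nearby boundary points $p_k\in\partial W_k\subset\Omega(f)$ (or, in configuration (III), pull $x$ backward to $z_k=f^{-n_k}(x)\in\overline{W_k}$), and the vanishing diameters of $\overline{f^{n_k}(W_k)}$ (resp.\ $\overline{W_k}$) force $f^{n_k}(p_k)\to x$ (resp.\ $z_k\to x$), yielding a non-wandering point in $U\cap f^{-n_k}(U)\cap\Omega(f)$ directly. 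Both arguments rest on the same two ingredients (small wandering domains, and genuine wandering of wandering components for periodic point free $f$), but the paper's counting argument is more combinatorial while yours is more explicitly geometric, producing an actual returning point of $\Omega(f)$. Two minor points: you should note that the fourth configuration (both $W_k$ and $f^{n_k}(W_k)$ eventually constant) is ruled out automatically by the wandering-domain property since $n_k\to\infty$ (your case (I) already absorbs it); and in case (I), since $W_k=W$ is constant and $y_k\to x$, one always has $x\in\overline W$, so $p_k=x$ there and the ``first exit point'' alternative is needed only in case (II).
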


\begin{proof}
  Reasoning by contradiction, let us suppose there exists an open set
  $U\subset\T^2$ such that $U\cap\Omega(f)\neq\emptyset$, but
  \begin{equation}
    \label{eq:U-no-non-wandering-recurrent}
    U\cap f^n(U)\cap\Omega(f)=\emptyset, \quad\forall
    n\in\Z\setminus\{0\}. 
  \end{equation}

  Let $z$ be an arbitrary point of $U\cap\Omega(f)$ and $\eps$ be a
  positive number such that the ball of radius $\eps$ centered at $z$
  satisfies $\overline{B_\eps(z)}\subset U$. Taking into account that
  $f$ exhibits small wandering domain, we know that
  \begin{displaymath}
    \mathscr{W}_U(z,\eps):=\left\{D\in\pi_0\big(\W(f)\big) :
      B_\eps(z)\cap D\neq\emptyset,\ \overline{D}\cap
      \big(\T^2\setminus U\big)\neq\emptyset\right\}   
  \end{displaymath}
  is a finite set.

  Then, let us consider the set
  \begin{equation}
    \label{eq:returning-times-Beps-z}
    \tau:=\left\{n\in\Z\setminus\{0\} :
      f^n\big(B_\eps(z)\big)\cap B_\eps(z)\neq\emptyset\right\}. 
  \end{equation}
  Notice that since $z$ is a non-wandering point, the set $\tau$ is
  nonempty, and moreover, is infinite.

  On the other hand, by \eqref{eq:U-no-non-wandering-recurrent} we
  know that
  \begin{equation}
    \label{eq:returning-times-Beps-z-on-W}
    f^{n}\big(B_\eps(z)\big)\cap B_\eps(z)\subset\W(f), \quad\forall
    n\in\tau. 
  \end{equation}
   
  Then, if $D$ is a connected component of the wandering set $\W(f)$
  such that $\overline D\subset U$, its boundary
  $\partial D\subset\Omega(f)$ and hence, by
  \eqref{eq:U-no-non-wandering-recurrent}, it holds
  $f^n(\overline{D})\cap U=\emptyset$, for every
  $n\in\Z\setminus\{0\}$.

  This implies that
  \begin{equation}
    \label{eq:returning-times-Beps-z-on-WUzeps}
    f^n\big(B_\eps(z)\big)\cap B_\eps(z)\subset \bigcup_{D\in
      \mathscr{W}_{U}(z,\eps)} D, \quad\forall n\in\tau.
  \end{equation}

  However, since $f$ is periodic point free, by
  Proposition~\ref{pro:lift-bounded-iness-wandering-domains-ppf-homeos}
  we know that every connected component of $\W(f)$ is indeed a
  wandering set for $f$, and we know that the set
  $\mathscr{W}_U(z,\eps)$ is finite. So, putting together
  \eqref{eq:returning-times-Beps-z},
  \eqref{eq:returning-times-Beps-z-on-W} and
  \eqref{eq:returning-times-Beps-z-on-WUzeps} we conclude the set
  $\tau$ is finite, contradicting the fact that $z$ was non-wandering.
\end{proof}

\subsection{Kronecker factors of homeomorphisms of $\T^2$}
\label{sec:kronecker-factors-T2}

As we have already mentioned in \S\ref{sec:kronecker-factors}, any
minimal equicontinuous homeomorphism of a compact metric space is
topologically conjugate to a Kronecker system. So, invoking a recent
result due to Hauser and Jäger \cite[Theorem
B]{HauserJaegerMonotMaxEq}, we have the following

\begin{theorem}
  \label{thm:classification-Kronecker-factors-T2}
  If $f\in\Homeo{}(\T^2)$ and $T_a\colon G\carr$ is a minimal
  Kronecker factor of $f$, then the group $G$ is either equal to
  $\T^2$, $\T$ or the trivial space $\{*\}$.
\end{theorem}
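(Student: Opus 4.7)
The plan is to reduce the statement to the classification of maximal equicontinuous factors of $\T^2$ homeomorphisms due to Hauser and Jäger, using standard structure theory for minimal equicontinuous systems.

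First I would observe that $(G,T_a)$ is a minimal equicontinuous system. Indeed, by classical Pontryagin theory, $G$ admits a translation-invariant compatible metric $d_G$, with respect to which every translation is an isometry; since $T_a$ is assumed minimal, $(G,T_a)$ is a minimal equicontinuous dynamical system. A second easy observation is that $G$ must be connected: the semi-conjugacy $h\colon \T^2\to G$ is continuous and surjective, and $\T^2$ is connected, so $h(\T^2)=G$ is a connected compact metrizable abelian group.

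Next, I would invoke the general structure theory of equicontinuous factors (see for instance \cite[Proposition 2.6.7]{TaoPoincareLega}, already cited in the paper): every minimal equicontinuous factor of $f$ factors through the maximal equicontinuous factor $\pi_{\mathrm{eq}}\colon \T^2 \to M_{\mathrm{eq}}$ of $f$; moreover, a factor map between two minimal equicontinuous systems on compact abelian groups is, up to a translation on the target, a continuous group homomorphism (by uniqueness of the Haar measure). Combining this with Hauser--Jäger's \cite[Theorem B]{HauserJaegerMonotMaxEq}, which asserts that the maximal equicontinuous factor of any $f\in\Homeo{}(\T^2)$ is topologically conjugate to a minimal translation on a torus $\T^d$ with $d\in\{0,1,2\}$, we obtain a continuous surjective group homomorphism (post-composed with a translation) $\T^d \to G$. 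Since $G$ is connected, its kernel is a closed subgroup of $\T^d$ whose quotient is a torus, and thus $G$ is itself isomorphic to $\T^{d'}$ with $d'\le d\le 2$, which gives $G\in\{\{*\},\T,\T^2\}$.

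The bulk of the work is done by Hauser--Jäger's theorem, so the main obstacle in this proof is really just a subtlety of bookkeeping: verifying that the factor map from $M_{\mathrm{eq}}\cong\T^d$ onto the compact abelian group $G$ is affine (so that $G$ inherits a torus structure rather than being merely a quotient space). This is where the uniqueness of the Haar measure in a minimal equicontinuous system, together with the fact that any factor map between such systems intertwines the Haar measures, is used. Everything else is formal.
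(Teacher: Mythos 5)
Your proposal is correct and follows essentially the same route as the paper: both reduce the statement to Hauser--Jäger's Theorem B on maximal equicontinuous factors of $\T^2$ homeomorphisms. The paper simply cites that theorem and records the present statement as a direct consequence, while you additionally spell out the intermediate bookkeeping (connectedness of $G$, factoring through the maximal equicontinuous factor, and affineness of factor maps between minimal Kronecker systems) that the paper leaves implicit.
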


According to the classification given by
Theorem~\ref{thm:classification-Kronecker-factors-T2}, any
homeomorphism $f\in\Homeo{}(\T^2)$ admitting a minimal Kronecker
factor is a topological extension of an irrational circle rotation,
\ie $f$ admits an irrational circle rotation as topological factor,
that will be just call \emph{irrational circle factors} for short.

The following simple lemma imposes some well-known restrictions for
the existence of irrational circle factors:

\begin{lemma}
  \label{lem:non-triv-Kron-factor-implies-BRD}
  Let $f\colon\T^2\carr$ be an orientation preserving homeomorphism
  admitting an irrational circle factor and $\tilde f\colon\R^2\carr$
  be a lift of $f$. Then there exist $\rho\in\R\setminus\Q$,
  $v\in\bb{S}^1$ with rational slope and $C>0$ such that
  \begin{displaymath}
    \abs{\scprod{\tilde f^n(z)-z}{v} -n\rho}\leq C,\quad\forall
    z\in\R^2,\ \forall n\in\Z.
  \end{displaymath}
\end{lemma}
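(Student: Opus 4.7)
The plan is to lift the semi-conjugacy to the universal covers, extract a vector $v$ from its $\Z^2$-equivariance, and bound the deviation via the oscillation of a $\Z^2$-periodic residual.

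First I would fix a continuous surjection $h\colon\T^2\to\T$ realizing the given semi-conjugacy, so that $h\circ f=R_\rho\circ h$ for some irrational rotation $R_\rho$, and pick a continuous lift $\tilde h\colon\R^2\to\R$. For each $w\in\Z^2$ the map $z\mapsto\tilde h(z+w)-\tilde h(z)$ is continuous and $\Z$-valued, hence constant; this defines a group homomorphism $\phi\colon\Z^2\to\Z$, which must be of the form $\phi(w)=\scprod{v_0}{w}$ for some unique $v_0\in\Z^2$. Similarly, $\tilde h\circ\tilde f-\tilde h$ is $\Z^2$-periodic and differs from $\rho$ by a fixed integer; absorbing that integer into $\rho$ (which remains irrational) I may assume
\[
\tilde h\circ\tilde f=\tilde h+\rho,
\]
and iterating, $\tilde h(\tilde f^n(z))=\tilde h(z)+n\rho$ for every $n\in\Z$ and $z\in\R^2$.

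Second, I would decompose $\tilde h$ into its linear and periodic parts: set $\psi(z):=\tilde h(z)-\scprod{v_0}{z}$. The equivariance of $\tilde h$ forces $\psi$ to be $\Z^2$-periodic, so it descends to a continuous function on $\T^2$ and is therefore bounded, say $\|\psi\|_\infty=M<\infty$. Substituting $\tilde h=\scprod{v_0}{\cdot}+\psi$ in the iterated equation yields
\[
\scprod{v_0}{\tilde f^n(z)-z}-n\rho=\psi(z)-\psi(\tilde f^n(z)),
\]
whose right-hand side is bounded in absolute value by $2M$.

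Third, I would verify $v_0\neq 0$, for otherwise $\tilde h$ itself would be $\Z^2$-periodic and hence bounded, while $\tilde h(\tilde f^n(z))=\tilde h(z)+n\rho\to\pm\infty$ along $n$, a contradiction. Since $v_0\in\Z^2\setminus\{0\}$, the unit vector $v:=v_0/\norm{v_0}$ has rational slope; replacing $\rho$ by $\rho/\norm{v_0}$ (still irrational) and $C$ by $2M/\norm{v_0}$ gives the claimed estimate.

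The main potential obstacle is keeping track of the several ambiguities introduced by the lifts (of $h$, of $R_\rho$, and of $f$), but each of these only contributes an integer or periodic correction absorbed into $\rho$ or into $\psi$; no surface-topological or dynamical input is required beyond the compactness of $\T^2$ and the elementary fact that a continuous integer-valued function on a connected space is constant.
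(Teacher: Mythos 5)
The paper's ``proof'' of this lemma is a bare citation to \cite[Lemma~3.1]{JaegerTalIrratRotFact}, so there is no in-paper argument to compare against. Your approach --- lift the semi-conjugacy to $\tilde h\colon\R^2\to\R$, split it as $\tilde h=\scprod{v_0}{\cdot}+\psi$ with $v_0\in\Z^2$ and $\psi$ $\Z^2$-periodic, and bound $\psi$ by compactness of $\T^2$ --- is the standard one, and its heart is correct: the identity $\scprod{v_0}{\tilde f^n(z)-z}-n\rho=\psi(z)-\psi(\tilde f^n(z))$, together with the observation that $v_0\neq 0$ (otherwise $\tilde h$ would be bounded while $\tilde h\circ\tilde f^n=\tilde h+n\rho$ is not), gives the desired bound with $v_0\in\Z^2\setminus\{0\}$ and $\rho$ the (irrational) rotation number of the circle factor.

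The one slip is the final normalization. You pass to $v:=v_0/\norm{v_0}$ and replace $\rho$ by $\rho/\norm{v_0}$, asserting this is ``still irrational''. That is not automatic: $\norm{v_0}=\sqrt{a^2+b^2}$ can itself be irrational, and nothing prevents $\rho$ from being a rational multiple of it. Concretely, take $f$ to be the rigid translation of $\T^2$ by $(\sqrt{5}/3,\,0)$ and $h\colon\T^2\to\T$, $h(x,y)=x+2y$; then $h$ semi-conjugates $f$ to the irrational circle rotation by $\sqrt{5}/3$, and your construction yields $v_0=(1,2)$, $\norm{v_0}=\sqrt{5}$, hence $\rho/\norm{v_0}=1/3\in\Q$. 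The fix is simply not to normalize: keep $v=v_0\in\Z^2\setminus\{0\}$, which has rational slope, together with the unscaled irrational $\rho$; this is exactly the form used in Theorem~A and its corollaries, where $v\in\R^2\setminus\{0\}$ with rational slope is all that is required. If one insists on a literal unit vector $v\in\mathbb{S}^1$ as in the lemma's wording, the rescaling of $\rho$ by $1/\norm{v_0}$ requires a separate argument about irrationality that your proof does not supply and which, as the example shows, can genuinely fail for a particular choice of circle factor.
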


\begin{proof}
  See for instance \cite[Lemma 3.1]{JaegerTalIrratRotFact}.
\end{proof}

The direction of the vector $v^\perp$, where $v$ is as in
Lemma~\ref{lem:non-triv-Kron-factor-implies-BRD}, is called the
\emph{homological direction} of the circle factor. Notice that, when
$f$ is not homotopic to the identity, the homological direction of a
circle factor is unique.


\subsection{Rotational deviations}
\label{sec:rotat-deviations}

Given an orientation preserving homeomorphism $f\colon\T^2\carr$, we
say that $f$ exhibits \emph{uniformly bounded $v$-deviations,} for
some $v\in\bb{S}^1$, whenever there exists a constant $C>0$ such that
given any lift $\tilde f\colon\R^2\carr$, there is $\rho\in\R$
satisfying
\begin{equation}
  \label{eq:unif-bound-v-dev-def}
  \abs{\scprod{\tilde f^n(z)-z}{v}-n\rho}\leq C,\quad\forall
  z\in\R^2,\ \forall n\in\Z.
\end{equation}
Observe that if the vector $v$ has irrational slope, then the
homeomorphism $f$ must be isotopic to the identity. However, in this
work, as a consequence of
Lemma~\ref{lem:non-triv-Kron-factor-implies-BRD}, we are mainly
concerned with the case where $v$ has rational slope. Moreover, since
we are dealing with periodic point free homoemorphisms, by
Proposition~\ref{pro:periodic-point-free-homeos-conj-Hk} we know that,
modulo conjugacy with a torus automorphism, we can assume our
homeomorphism $f\in\Homeo{k}(\T^2)$, for some $k\in\Z$, the vector
$v=(0,1)$ and the number $\rho$ is the only element of of the vertical
rotation set defined in \S\ref{sec:rot-set-rot-vect}. In such a case
we will say that $f$ exhibits \emph{uniformly bounded vertical
  deviations}.

As a particular case of our previous definition, a homeomorphism
$f\colon\T^2\carr$ is said to be \emph{annular} when there exist a
lift $\tilde f\colon\R^2\carr$ of $f$, a vector $v\in\bb{S}^1$ with
rational slope and a constant $C>0$ such that
\begin{displaymath}
  \abs{\scprod{\tilde f^n(z)-z}{v}}\leq C, \quad\forall z\in\R^2,\
  \forall n\in\Z.
\end{displaymath}
More generally, we say $f$ is \emph{eventually annular} when there
exists $k\in\N$ such that $f^k$ is annular.

We will need the following improvement of
Proposition~\ref{pro:ppf-homeos-no-inessential-points}:

\begin{proposition}
  \label{pro:ppf-nonannular-homeos-fully-essential-points}
  If $f\in\Homeo{}(\T^2)$ is periodic point free and is not eventually
  annular, then every non-wandering point is fully essential for $f$.
\end{proposition}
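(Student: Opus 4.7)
The plan is to argue by contradiction, showing that if some $x\in\Omega(f)$ fails to be fully essential, then a suitable power of $f$ must be annular---violating the standing assumption that $f$ is not eventually annular.

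By Proposition~\ref{pro:ppf-homeos-no-inessential-points} the point $x$ is essential, so for all small enough $\eps>0$ the open set $U:=\U_f\bigl(B_\eps(x)\bigr)$ is essential but not fully essential: the inclusion $i_*\colon H_1(U,\Z)\to H_1(\T^2,\Z)$ has image of rank exactly one, generated by some primitive $w\in\Z^2$. Because $x\in\Omega(f)\cap B_\eps(x)$, $B_\eps(x)$ is non-wandering, and the observation following \eqref{eq:U-eps-def} yields $N\geq 1$ with $f^N(U)=U$. After conjugating by an element of $\SL(2,\Z)$, assume $w=(1,0)$, and replace $U$ by its annular filling $V:=U\cup\bigcup\{W:W\text{ is an inessential component of }\T^2\setminus U\}$. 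Then $V$ is open, connected, annular of direction $(1,0)$, $f^N$-invariant, and every connected component of $\T^2\setminus V$ is an essential annular continuum of direction $(1,0)$.

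Next, lift to the annular cover $\A=\T\times\R$ through $p_w\colon\A\to\T^2$. By annularity, $V$ lifts homeomorphically onto an essential open connected subset $\hat V\subset\A$ contained in a horizontal strip of height strictly less than one, and each essential annular continuum $C\subset\T^2\setminus V$ lifts to a family of pairwise disjoint essential annular continua in $\A$ placed between consecutive deck-translates $\hat V+(0,n)$, $n\in\Z$. Choose a lift $\tilde F\colon\R^2\carr$ of $f^N$ fixing a distinguished component $\tilde V\subset\pi^{-1}(V)$. Commutation of $\tilde F$ with $\Z^2$-translations forces $\tilde F$ to preserve every translate $\tilde V+(0,n)$. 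After replacing $N$ by a suitable multiple so that the induced lift on $\A$ preserves both ends of the cylinder and each essential component of $\T^2\setminus V$ individually, the same argument shows that $\tilde F$ also preserves every $(1,0)$-periodic lift of these essential continua. Consequently, $\R^2$ decomposes into a stack of $\tilde F$-invariant $(1,0)$-periodic horizontal strips---the translates $\tilde V+(0,n)$ alternating with the lifted essential continua---each of uniformly bounded vertical extent. Every orbit of $\tilde F$ is trapped in one such strip, which gives a uniform bound $\bigl|\pr{2}\bigl(\tilde F^k(z)-z\bigr)\bigr|\leq C$ for all $z\in\R^2$ and $k\in\Z$. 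Hence $f^N$ is annular with direction $w^\perp=(0,1)$, so $f$ is eventually annular, producing the desired contradiction.

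The main technical obstacle is carrying out the decomposition step cleanly: justifying the annular-filling construction and verifying that each individual translate $\tilde V+(0,n)$ and each lifted essential continuum, not just the whole collection, is preserved by the chosen lift $\tilde F$. This rests on $\tilde F$'s commutation with $\Z^2$-translations together with the end-preservation in $\A$ and the preservation of each essential component of $\T^2\setminus V$, both of which may be arranged by replacing $N$ by a suitable multiple. Once these invariance statements are in place, the uniform displacement bound in the $w^\perp$-direction, and thus the annularity of $f^N$, follow immediately.
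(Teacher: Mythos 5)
Your argument mirrors the paper's: invoke Proposition~\ref{pro:ppf-homeos-no-inessential-points} for essentiality, obtain an $f^N$-invariant open connected annular set, and deduce that $f^N$ is annular. The paper states the last implication as a one-line remark ("this clearly implies that $f^k$ is an annular homeomorphism"), so you are in effect supplying the missing proof; your route through the annular cover is the natural one, but it contains an error.

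The assertion that the lift $\hat V\subset\A$ is "contained in a horizontal strip of height strictly less than one" is false in general. Take for $V$ a thin $\eps$-neighborhood of the embedded essential circle $\tilde x\mapsto\bigl(\pi(\tilde x),\pi(\tfrac12\sin 2\pi\tilde x)\bigr)$ in $\T^2$: this $V$ is open, connected and annular in direction $(1,0)$, yet the corresponding component $\hat V$ of $\hat\pi^{-1}(V)$ has vertical extent $1+2\eps>1$. What you actually use later is only that $\hat V$ is vertically \emph{bounded}, and while this is true it does not follow from annularity of $V$ alone: a connected component of $\hat\pi^{-1}(V)$ mapping homeomorphically onto $V$ could, a priori, spiral to $\pm\infty$ in $\A$. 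The missing step is to pick a compact essential annulus $\hat A\subset\hat V$ (which exists since $\hat V$ is open, connected and essential in $\A$), observe that $\hat V$ is disjoint from $\hat A\pm(0,1)\subset\hat V\pm(0,1)$, and conclude that the connected set $\hat V\supset\hat A$ is trapped in the bounded region of $\A$ lying between the two disjoint essential annular continua $\hat A-(0,1)$ and $\hat A+(0,1)$. With this repair, and with the gap regions between consecutive translates $\hat V+(0,n)$ inheriting uniform boundedness, the rest of your argument — fixing each translate by an appropriate lift $\tilde F$ and trapping every orbit in a bounded periodic strip — yields the required uniform vertical-displacement bound and hence the annularity of $f^N$.
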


\begin{proof}
  This is consequence of
  Proposition~\ref{pro:ppf-homeos-no-inessential-points} and the
  following simple remark: if $x\in\Omega(f)$ is essential but is not
  fully essential, then there is an open connected neighborhood $V$ of
  $x$ such that $\U_f(V)$ is an annular set and there is $k\in\N$ such
  that $f^k\big(\U_f(V)\big) = \U_f(V)$. This clearly implies that
  $f^k$ is an annular homeomorphism.
\end{proof}

On the other hand, we will say that the homeomorphism
$f\in\Homeo{k}(\T^2)$ exhibits \emph{unbounded horizontal deviations}
when
\begin{equation}
  \label{eq:horizontal-unbounded-deviations}
  \sup\left\{\abs{\pr{1}\big(\tilde f^n(z)-z\big) -\pr{1}\big(\tilde
      f^n(w)-w\big)} : w,z\in\R^2,\ n\in\Z\right\}=\infty. 
\end{equation}
Notice that any element of $\Homeo{k}(\T^2)$ exhibits unbounded
horizontal deviations, when $k\neq 0$.

We have the following simple

\begin{lemma}
  \label{lem:unbounded-hor-deviations-positive-and-negative}
  Let $f\in\Homeo{k}(\T^2)$ and $\tilde f\colon\R^2\carr$ be a lift of
  $f$. Then the following properties are equivalent:
  \begin{enumerate}
  \item estimate \eqref{eq:horizontal-unbounded-deviations} holds;
  \item we have
    \begin{equation}
      \label{eq:horizontal-unbounded-deviations-positive-times}
      \sup\left\{\abs{\pr{1}\big(\tilde f^n(z)-z\big) -\pr{1}\big(\tilde
          f^n(w)-w\big)} : w,z\in\R^2,\ n\in\N\right\}=\infty;
    \end{equation}
  \item it holds
    \begin{equation}
      \label{eq:horizontal-unbounded-deviations-negative-times}
      \sup\left\{\abs{\pr{1}\big(\tilde f^{-n}(z)-z\big)
          -\pr{1}\big(\tilde f^{-n}(w)-w\big)} : w,z\in\R^2,\
        n\in\N\right\}=\infty. 
    \end{equation}
  \end{enumerate}
\end{lemma}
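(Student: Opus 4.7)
The plan is to prove the equivalence by combining trivial implications with a single change-of-variable identity, so most of the work is bookkeeping.

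First, the implications $(2)\Rightarrow(1)$ and $(3)\Rightarrow(1)$ are tautological, since the sup in \eqref{eq:horizontal-unbounded-deviations} is taken over the larger index set $\Z\supset\N$ and $\Z\supset-\N$. Conversely, if $(1)$ holds then the sup over $n\in\Z$ is infinite, and since $\Z$ decomposes as $\N\cup\{0\}\cup(-\N)$ with $n=0$ contributing $0$, at least one of the sups in $(2)$ or $(3)$ must already be infinite. Hence the only real content of the lemma is the equivalence $(2)\Leftrightarrow(3)$.

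For that equivalence, I would perform the substitution $z':=\tilde f^{-n}(z)$, $w':=\tilde f^{-n}(w)$ for fixed $n\in\N$ and $z,w\in\R^2$. Since $\tilde f$ is a homeomorphism of $\R^2$, the identities
\begin{displaymath}
  \tilde f^{-n}(z)-z = -\bigl(\tilde f^n(z')-z'\bigr), \qquad \tilde
  f^{-n}(w)-w = -\bigl(\tilde f^n(w')-w'\bigr)
\end{displaymath}
hold, so projecting to the first coordinate and taking absolute values yields
\begin{displaymath}
  \abs{\pr{1}\bigl(\tilde f^{-n}(z)-z\bigr)-\pr{1}\bigl(\tilde
    f^{-n}(w)-w\bigr)}=\abs{\pr{1}\bigl(\tilde
    f^n(z')-z'\bigr)-\pr{1}\bigl(\tilde f^n(w')-w'\bigr)}.
\end{displaymath}
As $\tilde f^{-n}\colon\R^2\to\R^2$ is a bijection, the pair $(z',w')$ ranges over all of $\R^2\times\R^2$ as $(z,w)$ does. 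Taking the supremum over $n\in\N$ and $z,w\in\R^2$ on both sides shows that the quantity in \eqref{eq:horizontal-unbounded-deviations-positive-times} and the quantity in \eqref{eq:horizontal-unbounded-deviations-negative-times} are equal, and in particular are simultaneously finite or simultaneously infinite.

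There is no genuine obstacle here; the argument is the time-symmetry of relative horizontal displacement of two points along an orbit segment of length $n$, and it makes no use either of the specific value of $k$ in $\Homeo{k}(\T^2)$ or of any dynamical hypothesis on $f$. In particular the proof goes through verbatim in the case $k\neq 0$, where, as noted in the paper, unbounded horizontal deviations hold automatically.
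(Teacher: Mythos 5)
Your proof is correct. The reduction of the lemma to $(2)\Leftrightarrow(3)$ is right: the implications $(2)\Rightarrow(1)$ and $(3)\Rightarrow(1)$ are trivial inclusions of index sets, and $(1)\Rightarrow\bigl[(2)\text{ or }(3)\bigr]$ follows from $\Z=\N\cup\{0\}\cup(-\N)$ with the $n=0$ term contributing $0$. The substitution $z'=\tilde f^{-n}(z)$, $w'=\tilde f^{-n}(w)$ and the identity $\tilde f^{-n}(z)-z=-\bigl(\tilde f^n(z')-z'\bigr)$ then show the two sups are literally equal, since $\tilde f^{-n}$ is a bijection of $\R^2$.

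Your route differs from the paper's in a useful way. The paper first disposes of the case $k\neq 0$ separately by computing $\tilde f^n(z)-\tilde f^n\bigl(z+(0,1)\bigr)=(kn,1)$, which makes all three quantities trivially infinite; only then, for $k=0$, does it argue $(2)\Leftrightarrow(3)$, and it does so by writing $\tilde f=I_0+\Delta_{\tilde f}$ and manipulating the displacement cocycle $\Delta_{\tilde f^{-n}}$. Your argument works directly with the raw displacements $\tilde f^{\pm n}(z)-z$ rather than the $\Z^2$-periodic functions $\Delta_{\tilde f^{\pm n}}$, so it does not depend on $k$ at all and handles both cases at once. This is both shorter and more robust: you bypass the cocycle bookkeeping entirely (the paper's displayed identities there in fact contain small typos --- $\tilde f^{-1}=I-\Delta_{\tilde f}\circ\tilde f$ should read $\tilde f^{-1}=I-\Delta_{\tilde f}\circ\tilde f^{-1}$, and $\Delta_{\tilde f^{-n}}=-\Delta_{\tilde f^n}\circ\tilde f^n$ should read $\Delta_{\tilde f^{-n}}=-\Delta_{\tilde f^n}\circ\tilde f^{-n}$), which your substitution sidesteps cleanly. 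Your stronger conclusion --- that the two suprema in $(2)$ and $(3)$ are actually equal, not merely simultaneously infinite --- is also a pleasant bonus, though the lemma does not need it.
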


\begin{proof}
  First observe that given any $z\in\R^2$, we have
  \begin{displaymath}
    \tilde f^n(z) - \tilde f^n\big(z+(0,1)\big) = I_k^n(0,1) = (kn,1),
    \quad\forall n\in\Z.
  \end{displaymath}
  This implies that, when $k\neq 0$,
  \eqref{eq:horizontal-unbounded-deviations},
  \eqref{eq:horizontal-unbounded-deviations-positive-times} and
  \eqref{eq:horizontal-unbounded-deviations-negative-times} hold
  simultaneously, for every $f\in\Homeo{k}(\T^2)$ and any lift
  $\tilde f\colon\R^2\carr$.

  So let us just consider the case $k=0$. It is clear that condition
  \eqref{eq:horizontal-unbounded-deviations} holds if and only if
  either \eqref{eq:horizontal-unbounded-deviations-positive-times} or
  \eqref{eq:horizontal-unbounded-deviations-negative-times} holds. So,
  it just remains to prove that
  \eqref{eq:horizontal-unbounded-deviations-positive-times} and
  \eqref{eq:horizontal-unbounded-deviations-negative-times} are
  equivalent.

  To prove this, it is enough to observe that, if we write $\tilde
  f=I_0+\Delta_{\tilde f}$, then it holds $\tilde
  f^{-1}=I-\Delta_{\tilde f}\circ \tilde f$. So we have
  \begin{displaymath}
    \Delta_{\tilde f^{-n}}= -\Delta_{\tilde f^n}\circ\tilde f^n,
    \quad\forall n\geq 1,
  \end{displaymath}
  and this clearly shows that
  \eqref{eq:horizontal-unbounded-deviations-positive-times} holds if
  and only if
  \eqref{eq:horizontal-unbounded-deviations-negative-times} holds.  
\end{proof}

Finally, a homeomorphism $f\in\Homeo0(\T^2)$ is said to be a
\emph{pseudo-rotation with uniformly bounded rotational deviations}
when $f$ is a pseudo-rotation and there is a constant $C>0$ such that
\begin{equation}
  \label{eq:pseudo-rot-unif-bound-dev}
  \norm{\tilde f^n(z) -z -n\bar\rho}\leq C,\quad\forall z\in\R^2,\
  \forall n\in\Z,
\end{equation}
where $\tilde f\colon\R^2\carr$ is a lift of $f$ and its rotation set
satisfies $\rho(\tilde f)=\{\bar\rho\}$.

Let us finish this section with two lemmas about torus homeomorphisms
exhibiting unbounded deviations in certain direction:

\begin{lemma}
  \label{lem:eventually-large-non-wandering-open-sets}
  Let $f\in\Homeo{k}(\T^2)$, $\tilde f\colon\R^2\carr$ be a lift of
  $f$ and suppose $f$ exhibits unbounded horizontal rotational
  deviations, \ie condition \eqref{eq:horizontal-unbounded-deviations}
  holds.  If $x\in\Omega(f)$ is a fully essential point for $f$ and
  $V$ is a connected neighborhood of $x$, then it holds
  \begin{equation}
    \label{eq:large-deformation-nw-sets}
    \sup_{n\in\N}\diam\Big(\pr{1}\big(\tilde f^n(\tilde V)\big)\Big) = 
    \infty, 
  \end{equation}
  for every connected component $\tilde V$ of $\pi^{-1}(V)$.
\end{lemma}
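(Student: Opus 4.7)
My plan is to argue by contradiction: assume $M:=\sup_{n\in\N}\diam\bigl(\pr{1}(\tilde f^n(\tilde V))\bigr)<\infty$. Setting $\phi_n(z):=\pr{1}(\tilde f^n(z)-z)$, the connectedness of each iterate $\tilde f^n(\tilde V)$ yields
\[
|\phi_n(z)-\phi_n(z')|\leq M+\diam\bigl(\pr{1}(\tilde V)\bigr)\qquad\forall z,z'\in\tilde V,\ n\in\N.
\]
Because the $\Z^2$-equivariance relation $\tilde f(z+w)=\tilde f(z)+I_k w$ preserves horizontal diameters of iterates, the analogous bound holds on every translate $\tilde V+w$, $w\in\Z^2$. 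Combining this with the cocycle identity $\phi_{n+m}=\phi_n\circ\tilde f^m+\phi_m$ produces a constant $C_0$ (depending only on $\tilde V$ and $M$) such that the oscillation of $\phi_n$ over any ``piece'' $P=\tilde f^m(\tilde V+w)$ is bounded by $C_0$, uniformly in $n\in\N$, $m\in\Z$ and $w\in\Z^2$.

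Next, I would invoke the full essentiality of $x$: the set $\U_f(V)$ is fully essential, so its lift $\tilde{\U}\ni\tilde V$ is invariant under the deck translations $T_w$, $w\in\Z^2$. Hence for every $w\in\Z^2$ one can find a finite chain of pieces inside $\tilde{\U}$ linking $\tilde V$ to $\tilde V+w$; telescoping the single-piece oscillation bound along such a chain yields a constant $C(w)$, independent of $n$, satisfying
\[
|\phi_n(p)-\phi_n(p+w)|\leq C(w)\qquad\forall p\in\tilde V,\ n\in\N.
\]

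In the case $k\neq 0$, the conclusion would follow at once: choosing $w=(0,1)$ and computing $\phi_n(p+(0,1))-\phi_n(p)=\pr{1}((I_k^n-I)(0,1))=nk$ gives $|nk|\leq C((0,1))$, absurd for large $n$. (Incidentally, for $k\neq 0$ the unbounded horizontal deviations hypothesis is automatic by Lemma~\ref{lem:unbounded-hor-deviations-positive-and-negative}.)

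The main obstacle lies in the case $k=0$: here $\phi_n$ descends to a continuous function on $\T^2$, and the chain estimate only gives uniform control of the oscillation of $\phi_n$ over compact subsets of $\U_f(V)$, not over all of $\T^2$. My plan to bridge this gap is to exploit the periodic point free setting of the paper and show that $\U_f(V)$ is in fact dense in $\T^2$: any nonempty open component $W$ of $\T^2\setminus\overline{\U_f(V)}$ would be $f^N$-invariant and inessential (since $\overline{\U_f(V)}$ remains fully essential), so its bounded connected lifts would be preserved by a suitable power of $\tilde f$, and Brouwer's translation theorem (Theorem~\ref{thm:Brouwer}) would produce a periodic point, a contradiction. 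Granted density, a Lindel\"of extraction of finite subcovers from $\{f^m(V)\}_{m\in\Z}$ over a compact exhaustion of $\U_f(V)$, combined with the continuity of $\phi_n$, propagates the compact-subset bound to all of $\T^2$, contradicting the unbounded horizontal deviations hypothesis.
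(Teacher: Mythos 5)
Your argument splits cleanly by the twist number $k$, and the two halves fare very differently.

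For $k\neq 0$ the cocycle/chain argument is sound and is a genuinely different route from the paper's. You reduce to a purely combinatorial contradiction: the chain estimate bounds $|\phi_n(p)-\phi_n(p+(0,1))|$ uniformly in $n$, while the $\Z^2$-equivariance forces $\phi_n(p+(0,1))-\phi_n(p)=nk$. (Two small points worth flagging: the per-piece oscillation bound is not literally uniform in $m\in\Z$ as stated, since $\diam\bigl(\pr{1}(\tilde f^{m}(\tilde V))\bigr)$ is controlled by hypothesis only for $m\geq 0$; but since any fixed chain uses finitely many $m_i$, the bound holds for $n\geq -\min_i m_i$ and the finitely many remaining $n$ can be absorbed into the constant. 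Also, you need the telescoping chain to run inside $\tilde{\U}$, which is legitimate precisely because fully essential means the component $\tilde{\U}$ of $\pi^{-1}\bigl(\U_f(V)\bigr)$ is invariant under all deck translations.)

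The $k=0$ case, however, has a genuine gap, and it is precisely the one your approach is structurally ill-equipped to handle. Three problems compound. First, the lemma as stated does \emph{not} assume $f$ is periodic point free; that hypothesis appears later in Theorem~\ref{thm:separating-sets-orbits-of-open-sets}, not here, so importing it means you are proving a weaker statement. Second, even granting it, the density argument does not go through as sketched: a connected component $W$ of $\T^2\setminus\overline{\U_f(V)}$ need not be $f^N$-invariant; the set $\T^2\setminus\overline{\U_f(V)}$ is $f^N$-invariant, but its (possibly infinitely many) components may be permuted with infinite orbit, in which case Brouwer gives nothing. Third, and most fundamentally, even if density held, knowing that the oscillation of $\phi_n$ is uniformly (in $n$) bounded on each compact subset of $\U_f(V)$ does not propagate to a global bound on $\T^2$: the constants $C(w)$ grow with the chain length, and a dense open set need not contain any single compact set whose $\pi$-image is all of $\T^2$, so no Lindelöf extraction closes the gap. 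The paper sidesteps every one of these difficulties with a single geometric device: it builds a bounded open set $Q\subset\R^2$ containing the fundamental domain $(0,1)^2$ whose \emph{boundary} lies in $\pi^{-1}\bigl(\U_f(V)\bigr)$, covers that boundary by finitely many translated pieces $\tilde f^{n_{z_i}}(\tilde V)+\boldsymbol{p}_{z_i}$, and observes that unbounded horizontal deviations force $\diam\bigl(\pr{1}(\tilde f^n(\partial Q))\bigr)\to\infty$ along a subsequence, whence one of the finitely many covering pieces must have unbounded horizontal diameter. Crucially, this requires only that $\partial Q$ meet $\U_f(V)$, not that $\U_f(V)$ be dense, and makes no appeal to Brouwer. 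That construction is the missing ingredient in your $k=0$ argument; without it, the compact-subset oscillation bound cannot be converted into the required global one.
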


\begin{proof}
  Without loss of generality we can assume $V$ is open and
  lift-bounded. Since $x$ is fully essential, there are two continuous
  simple closed curves $\alpha_1,\alpha_2\colon\T\to\T^2$ that
  generates the first homology group $H_1(\T^2,\Z)$ and such that
  their images are contained in $\U_f(V)$. This implies there exists
  $C>0$ such that
  \begin{displaymath}
    \diam D<C, \quad\forall
    D\in\pi_0\big(\R^2\setminus\pi^{-1}(\alpha_1\cup\alpha_2)\big), 
  \end{displaymath}
  where, by some abuse of notation, we are just writing $\alpha_i$ to
  denote the image of the curve $\alpha_i$, and $\pi_0(\cdot)$ denotes
  the set of connected components of the corresponding space.
  
  Then, let us consider the open set
  \begin{equation}
    \label{eq:Q-fundamental-domain-def}
    Q:=(0,1)^2\cup \bigcup\left\{D\in
      \pi_0\big(\R^2\setminus\pi^{-1}(\alpha_1\cup\alpha_2)\big),\
      D\cap (0,1)^2\neq\emptyset\right\}.
  \end{equation}
  Notice that, by the previous estimate, $Q$ is bounded in $\R^2$ and
  its boundary is contained in $\pi^{-1}(\alpha_1\cup\alpha_2)$.

  So, if $\tilde V$ is an arbitrary connected component of
  $\pi^{-1}(V)$, for each $z\in\partial Q$ there exist $n_z\in\Z$ and
  $\boldsymbol{p}_z\in\Z^2$ such that
  \begin{displaymath}
    z\in \tilde f^{n_z}(\tilde V) + \boldsymbol{p}_z,
  \end{displaymath}
  where we are using the notation introduced in
  Remark~\ref{rem:translations-especial-notation}

  By compactness of $\partial Q$, there exist finitely many points
  $z_1,\ldots,z_\ell\in\partial Q$ such that
  \begin{equation}
    \label{eq:boundary-Q-cover-translate-f-V}
    \partial Q \subset \bigcup_{i=1}^\ell \tilde f^{n_{z_i}}(\tilde V)
    + \boldsymbol{p}_{z_i}.
  \end{equation}
  This implies
  \begin{displaymath}
    \tilde f^n(\partial Q) \subset\bigcup_{i=1}^\ell \tilde
    f^{n+n_{z_i}}(\tilde V) + I_k^n \boldsymbol{p}_{z_i}, \quad\forall
    n\in\Z, 
  \end{displaymath}
  where $I_k$ denotes the integer matrix given by \eqref{eq:Im-def}.

  Then, since $Q$ contains a fundamental domain and
  \eqref{eq:horizontal-unbounded-deviations} holds, by
  \eqref{eq:horizontal-unbounded-deviations-positive-times} of
  Lemma~\ref{lem:unbounded-hor-deviations-positive-and-negative} we
  conclude that
  \begin{displaymath}
    \sup_{n\in\N} \diam \Big(\pr{1}\big(\tilde f^n(\partial
    Q)\big)\Big) = \infty. 
  \end{displaymath}
  Now, for each $n\in\Z$, $\tilde f^n(\partial Q)$ is covered by
  $\ell$ (integer translations of) images of $\tilde V$, so estimate
  \eqref{eq:large-deformation-nw-sets} must hold.
\end{proof}

\begin{lemma}
  \label{lem:essential-annular-images-open-sets}
  Let $f\in\Homeo{k}(\T^2)$, $\tilde f\colon\R^2\carr$ be a lift of
  $f$ and suppose $f$ exhibits uniformly bounded vertical rotational
  deviations and unbounded horizontal rotational deviations. Let
  $x\in\Omega(f)$ be a fully essential point for $f$, $\tilde{V}$ a
  neighborhood of some point $\tilde x\in\pi^{-1}(x)\subset\R^2$, and
  $N_0$ be a natural number. Then there exists $m\in\N$, such that for
  every $j\in\{m,m+1,\ldots, m+N_0\}$, there are $p_j,p_j',q_j\in\Z$,
  with $p_j\neq p_j'$, such that
  \begin{displaymath}
    \tilde f^j(\tilde V)\cap T_{(p_j,q_j)}(\tilde V)\neq\emptyset,\
    \text{and}\quad \tilde f^j(\tilde V)\cap T_{(p_j',q_j)}(\tilde 
    V)\neq\emptyset, 
  \end{displaymath}
  for every $j\in\{m,m+1,\ldots,m+N_0\}$.
\end{lemma}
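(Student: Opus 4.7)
The plan is to combine Lemma~\ref{lem:eventually-large-non-wandering-open-sets} with a pigeonhole argument based on the bounded vertical deviations. Without loss of generality, $\tilde V$ is a bounded open connected neighborhood of $\tilde x$ containing a ball $B_r(\tilde x)$ for some $r>0$, on which $\pi$ is injective; this reduction is harmless since for any smaller neighborhood $\tilde V'\subseteq\tilde V$ one has $\tilde f^j(\tilde V')\cap T_{(p,q)}(\tilde V')\subseteq\tilde f^j(\tilde V)\cap T_{(p,q)}(\tilde V)$, so the conclusion for $\tilde V'$ implies that for $\tilde V$. Let $C_V$ denote the uniform vertical rotational deviation bound, so that $\tilde f^j(\tilde V)\subset\R\times[j\rho-C_V-\diam\tilde V,\,j\rho+C_V+\diam\tilde V]$. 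Whenever $\tilde f^j(\tilde V)\cap T_{(p,q)}(\tilde V)\neq\emptyset$, the integer $q$ is constrained to an interval of fixed length around $j\rho$, so only at most $L:=\lceil 2C_V+3\diam\tilde V\rceil+1$ distinct values of $q$ can appear in the set
\begin{displaymath}
I_j:=\bigl\{(p,q)\in\Z^2: \tilde f^j(\tilde V)\cap T_{(p,q)}(\tilde V)\neq\emptyset\bigr\}.
\end{displaymath}

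Given any prescribed $D>0$, Lemma~\ref{lem:eventually-large-non-wandering-open-sets} allows us to choose $m$ with $\diam\pr{1}(\tilde f^m(\tilde V))\geq D$. Writing $\tilde f^j=\tilde f^{j-m}\circ\tilde f^m$ for $j\in\{m,\ldots,m+N_0\}$ and decomposing $\tilde f^{j-m}=I_k^{j-m}+\Delta_{\tilde f^{j-m}}$, the shear $I_k^{j-m}$ distorts first coordinates by at most $|k|N_0$ times the (bounded) vertical extent of $\tilde f^m(\tilde V)$, while $\|\Delta_{\tilde f^{j-m}}\|_\infty$ is uniformly bounded by some $M$ for $j-m\in\{0,\ldots,N_0\}$. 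A direct computation then gives $\diam\pr{1}(\tilde f^j(\tilde V))\geq D-C_0$ for every $j\in\{m,\ldots,m+N_0\}$, where $C_0=C_0(f,N_0,\tilde V)$ is independent of $m$. Thus, by choosing $m$ with $D$ large, every set in the window $\{\tilde f^j(\tilde V)\}_{j=m}^{m+N_0}$ is simultaneously very wide horizontally and contained in a strip of fixed vertical width.

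It remains to show that, once $D-C_0$ is sufficiently large, $|I_j|>L$ for each $j$ in the window; the pigeonhole principle over the at most $L$ admissible $q$-values then yields the two pairs $(p_j,q_j),(p_j',q_j)\in I_j$ with $p_j\neq p_j'$ asserted by the lemma. Since $\tilde V\supset B_r(\tilde x)$, the inclusion $(p,q)\in I_j$ is implied by $d(\tilde x+(p,q),\tilde f^j(\tilde V))<r$. Using the path-connectedness of $\tilde V$, I would choose a continuous path inside $\tilde f^j(\tilde V)$ whose first coordinate traverses an interval of length at least $D-C_0-\diam\tilde V$; applying the intermediate value theorem on $\pr{1}$ along the path, and exploiting that $\pr{2}$ stays within a strip of bounded height, one counts lattice points of $\tilde x+\Z^2$ lying within distance $r$ of the image and obtains a lower bound for $|I_j|$ of order $D-C_0$, which exceeds $L$ for $D$ sufficiently large.

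The main obstacle lies in this final counting step: for small $r$, a horizontally wide but vertically thin curve inside $\tilde f^j(\tilde V)$ might evade many lattice points, so merely tracking a single path is insufficient. To push the count through, one exploits that $\tilde f^j(\tilde V)$ is the homeomorphic image of an open disk (hence has nonempty interior and its boundary $\tilde f^j(\partial\tilde V)$ is a Jordan curve of horizontal extent $\geq D-C_0$), and invokes the full essentiality of $x$ to rule out degenerate tubular geometries for $\tilde f^j(\tilde V)$. Together with the bounded vertical deviations---which ensure the strip containing $\tilde f^j(\tilde V)$ is aligned with integer rows uniformly in $j$---these ingredients guarantee that enough lattice points of $\tilde x+\Z^2$ lie within $r$ of $\tilde f^j(\tilde V)$ to force $|I_j|>L$.
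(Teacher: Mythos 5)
Your setup is sound up to the last step: shrinking $\tilde V$ to a small ball, using the bounded vertical deviations to constrain the admissible $q$'s to at most $L$ values per iterate, and observing that a large horizontal diameter at time $m$ (from Lemma~\ref{lem:eventually-large-non-wandering-open-sets}) persists across the window $\{m,\ldots,m+N_0\}$ after a uniformly bounded loss. But the final counting step, which you yourself flag as the obstacle, has a genuine gap. A connected open subset of $\R^2$ with huge horizontal extent and uniformly bounded vertical extent can be a thin meandering tube that avoids the entire lattice $\tilde x+\Z^2$ (and all balls $B_r(\tilde x)+\Z^2$ for small $r$), so no lower bound on $\lvert I_j\rvert$ follows from the geometry of $\tilde f^j(\tilde V)$ alone. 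Full essentiality of $x$ has already been spent producing the horizontal width via Lemma~\ref{lem:eventually-large-non-wandering-open-sets}; it does not control the fine-scale thickness of a single iterate, and the Jordan curve $\tilde f^j(\partial\tilde V)$ can perfectly well avoid every translate of $B_r(\tilde x)$.

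The paper's remedy is not to aim at translates of $\tilde V$ directly. Following the proof of Lemma~\ref{lem:eventually-large-non-wandering-open-sets}, one builds a bounded open set $Q\supset(0,1)^2$ whose boundary $\partial Q$ is covered by finitely many integer translates $\tilde f^{n_{z_i}}(\tilde V)+\boldsymbol p_{z_i}$, $i=1,\ldots,\ell$; full essentiality of $x$ enters precisely here, through the two closed curves $\alpha_1,\alpha_2$ generating $H_1(\T^2,\Z)$ contained in $\U_f(\pi(\tilde V))$. Connectedness of $\tilde f^j(\tilde V)$ then forces it to cross $\partial Q+\boldsymbol p$ for many far-apart lattice vectors $\boldsymbol p$, hence to meet some $\tilde f^{n_{z_i}}(\tilde V)+\boldsymbol p$ --- an intersection that cannot be evaded, unlike the discrete set $\tilde x+\Z^2$. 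Pigeonholing over both the index $i\in\{1,\ldots,\ell\}$ and the admissible second coordinate of $\boldsymbol p$ (uniformly bounded by your vertical-strip observation) yields two such intersections with the same $i$ and the same vertical coordinate, and applying $\tilde f^{-n_{z_i}}$ produces the required double intersection, but at the shifted time $j-n_{z_i}$. This shift, absent from your scheme, is why the paper needs the combinatorial Lemma~\ref{lem:no-gap-integers}: one starts from a longer window $\{m',\ldots,m'+M_0\}$ chosen so that the resulting set of shifted exponents $\{j-n_{z_{i(j)}}\}$ still contains $N_0+1$ consecutive integers.
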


Before proving Lemma~\ref{lem:essential-annular-images-open-sets}, we
need the following elementary combinatorial result:

\begin{lemma}
  \label{lem:no-gap-integers}
  Let $A=\{n_1,n_2,\ldots, n_\ell\}$ be an arbitrary nonempty set of
  integer numbers and $N_0$ be a natural number. Then, there exists an
  integer $M_0\geq N_0$ such that for every $m'\in\N$ and any function
  $\xi\colon\{m',m'+1,\ldots,m'+M_0\}\to A$, there is $m\in\Z$ such
  that
  \begin{displaymath}
    \{m,m+1,\ldots,m+N_0\} \subset \left\{ j-\xi(j) :
      j\in\{m',m'+1,\ldots,m'+M_0\}\right\}.
  \end{displaymath}
\end{lemma}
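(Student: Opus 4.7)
My approach rests on a simple reduction. Suppose we can locate a subinterval
$J=\{j_0,j_0+1,\ldots,j_0+N_0\}\subset\{m',m'+1,\ldots,m'+M_0\}$ of length $N_0+1$ on which $\xi$ is constant, say $\xi|_J\equiv a\in A$. Then on $J$ one has $j-\xi(j)=j-a$, so the image contains
\[
\{j_0-a,\; j_0+1-a,\;\ldots,\; j_0+N_0-a\},
\]
which is exactly a set of $N_0+1$ consecutive integers. Taking $m:=j_0-a$ yields the desired inclusion and the conclusion of the lemma.

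The task therefore reduces to the purely combinatorial fact that, for $M_0$ sufficiently large in terms of $\ell:=|A|$ and $N_0$, every function $\xi\colon\{m',\ldots,m'+M_0\}\to A$ must admit a constant block of length at least $N_0+1$. I would prove this by induction on $\ell$. The base case $\ell=1$ is immediate: $\xi$ is forced to be constant on its entire domain, so any $M_0\geq N_0$ works. For the inductive step, assuming the claim holds for alphabets of size $\ell-1$ with some bound $M_{\ell-1}=M_{\ell-1}(N_0)$, I would set $M_\ell$ to be of the order of $\ell\cdot\bigl(M_{\ell-1}+N_0+1\bigr)$ and partition the domain into $\ell$ consecutive sub-blocks, each of length $M_{\ell-1}+N_0+1$. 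A pigeonhole on the symbols appearing in each sub-block then splits into two cases: either some sub-block omits a symbol of $A$, in which case the inductive hypothesis applied within that sub-block (over the remaining $\ell-1$ symbols) delivers a constant run of length $N_0+1$; or every sub-block uses all $\ell$ symbols, in which case a direct count of occurrences of a dominant symbol across the sub-blocks produces the required run.

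The main obstacle is precisely this combinatorial step: while a naive pigeonhole readily shows that some symbol of $A$ is attained on at least $\lceil(M_0+1)/\ell\rceil$ positions of the domain, guaranteeing that $N_0+1$ of them occur at \emph{consecutive} indices is substantially more subtle and is what dictates the tower-type dependence of $M_0$ on $\ell$ and $N_0$. Once the combinatorial claim is in place with an explicit choice of $M_0=M_0(\ell,N_0)\geq N_0$, the lemma follows from the reduction of the first paragraph; crucially, the bound $M_0$ is independent of $m'$, as required.
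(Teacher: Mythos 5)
Your reduction is a genuinely different route from the paper's, and unfortunately it rests on a combinatorial claim that is false. You correctly observe that if $\xi$ were constant on a sub-block of length $N_0+1$, then on that block $j\mapsto j-\xi(j)$ produces $N_0+1$ consecutive integers; but you then assert that for $M_0$ large enough (in terms of $\ell$ and $N_0$) every $\xi\colon\{m',\ldots,m'+M_0\}\to A$ must contain such a constant run. This is not so: already for $\ell=2$ and $N_0=1$, take $A=\{a,b\}$ with $a\neq b$ and let $\xi$ alternate $a,b,a,b,\ldots$. No matter how large $M_0$ is, $\xi$ has no constant block of length $2$, so there is no bound $M_0(\ell,N_0)$ of the kind you postulate and the inductive scheme cannot be completed.

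The paper's argument avoids constant runs entirely. After ordering $n_1\leq\cdots\leq n_\ell$, it takes the explicit (and much smaller) value $M_0:=N_0+n_\ell-n_1$, sets $m:=m'-n_1$, and considers the shifted copies $B_i:=\{j-n_i : j\in\{m',\ldots,m'+M_0\}\}$ of the domain; the observation used is that $\bigcap_{i=1}^\ell B_i=B_1\cap B_\ell=\{m,m+1,\ldots,m+N_0\}$, an interval of $N_0+1$ consecutive integers contained in every $B_i$. Note this bound is linear in the diameter $n_\ell-n_1$ of $A$ and independent of $\ell$, which is already a strong hint that a constant-block argument with tower-type bounds is not what is intended. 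You should also be warned that your alternating example with $A=\{0,1\}$ is more troublesome than you may have realized: it forces $j-\xi(j)$ to lie in a single residue class modulo $2$, so its image contains no two consecutive integers at all. The interaction between $\xi$ and the arithmetic structure of $A$ is therefore delicate here, and any successful proof must engage with it directly rather than pigeonhole on $\xi$ alone.
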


\begin{proof}[Proof of Lemma~\ref{lem:no-gap-integers}]
  Let us suppose the elements of $A$ are ordered in the following way:
  $n_1\leq n_2\leq \ldots\leq n_\ell$. Then, let us define
  \begin{align*}
    M_0&:= N_0+n_\ell-n_1, \\
    m&:=m'-n_1.
  \end{align*}

  For each $1\leq i\leq \ell$, consider the set
  $B_i:=\{j-n_i : j\in\{m',\ldots,m'+M_0\}\}$.

  Then, the lemma easily follows from the following simple remark:
  \begin{align*}
    B_i\cap B_k \supset B_1\cap B_\ell
    &= \{n\in\Z : m'-n_1\leq n\leq m'+M_0-n_\ell\}  \\
    &=\{m,m+1,\ldots,m+M_0\},
  \end{align*}
  for every $1\leq i\leq k\leq\ell$.
\end{proof}

\begin{proof}
  Without loss of generality we can assume $\tilde V=B_r(\tilde x)$,
  with $0<r<1/4$. Then, $V:=\pi(\tilde V)=B_r(x)\subset\T^2$, where
  $x=\pi(\tilde x)\in\Omega(f)$ is a fully essential point for $f$.

  Repeating the argument we used in the proof of
  Lemma~\ref{lem:eventually-large-non-wandering-open-sets}, there are
  two continuous simple closed curves
  $\alpha_1,\alpha_2\colon\T\to\T^2$ that generate the first homology
  group $H_1(\T^2,\Z)$ and such that their images are contained in
  $\U_f(V)$.

  Then let us consider the fundamental domain $Q$ given by
  \eqref{eq:Q-fundamental-domain-def}. Let
  $z_1,\ldots,z_\ell\in\partial Q$, $n_{z_1},\ldots,n_{z_\ell}\in\Z$
  and $\boldsymbol{p}_{z_1},\ldots,\boldsymbol{p}_{z_\ell}\in\Z^2$
  such that condition \eqref{eq:boundary-Q-cover-translate-f-V}
  holds. Let $M_0$ be the natural number given by
  Lemma~\ref{lem:no-gap-integers} associated to the natural numbers
  $n_{z_1}, \ldots, n_{z_\ell}$ and $N_0$.

  Let us consider the set
  \begin{equation}
    \label{eq:Q-hat-definition}
    \check{Q} := Q \cup \bigcup_{i=1}^\ell \tilde f^{n_{z_i}}(\tilde V)
    + \boldsymbol{p}_{z_i}. 
  \end{equation}

  Now, since $f$ exhibits bounded vertical rotational deviations, we
  know that its vertical rotation set given by
  \eqref{eq:rot-vertical-def} is a singleton and there exists a real
  constants $C_0>0$ such that
  \begin{equation}
    \label{eq:C0-bound-for-vertical-deviation}
    \abs{\pr{2}\big(\tilde f^n(z)-z\big) - n\rho}\leq C_0, \quad\forall
    z\in\R^2,\ \forall n\in\Z,
  \end{equation}
  where $\rho_{\mathrm{V}}(\tilde f)=\{\rho\}$.
  
  This implies that
  \begin{equation}
    \label{eq:Sn-strip-def}
    \tilde f^n(\tilde V)\subset  S_n:=\left\{ z\in\R^2 :
      \abs{\pr{2}(z-\tilde x) - n\rho}\leq C_0+1/4\right\},
    \quad\forall n\in\Z.
  \end{equation}

  Then, consider the set of integer numbers
  \begin{equation}
    \label{eq:En-vertical-integers}
    E_n:=\{q\in\Z : \exists p\in\Z,\ T_{(p,q)}(\check Q)\cap
    S_n\neq\emptyset\}, \quad\forall n\in\Z.
  \end{equation}
  Since $\check{Q}$ has finite diameter, we know that
  \begin{equation}
    \label{eq:En-cardinality-estimate}
    \sharp E_n\leq 2C_0+\diam \check{Q} +4, \quad\forall n\in\Z,
  \end{equation}
  where $\sharp(\cdot)$ denotes the cardinality of the set, and on the
  other hand it clearly holds
  \begin{equation}
    \label{eq:Q-disjoint-integer-translates}
    \check{Q}\cap T_{\boldsymbol{p}}(\check{Q})=\emptyset,
    \quad\forall \boldsymbol{p}\in\Z^2,\ \text{with }
    \norm{\boldsymbol{p}}>\diam\check{Q}.
  \end{equation}

  Taking into account \eqref{eq:En-cardinality-estimate},
  \eqref{eq:Q-disjoint-integer-translates}, we consider an integer
  number $L\in\N$ such that
  \begin{equation}
    \label{eq:L-estimate-from-below}
    L> \ell(2C_0+\diam\check{Q} + 4) + 1. 
  \end{equation}
  
  Now, by Lemma~\ref{lem:eventually-large-non-wandering-open-sets}
  \begin{displaymath}
    \sup_{n\in\N} \diam\Big(\pr{1}\big(\tilde f^n(\tilde V)\big)\Big)
    = \infty. 
  \end{displaymath}
  So, we can find a positive integer $m'$ such that for every
  $j\in\{m',m'+1,\ldots,m'+M_0\}$, there are
  $\boldsymbol{p}^{(j)}_1,\ldots,\boldsymbol{p}^{(j)}_L\in\Z^2$ with
  $\norm{\boldsymbol{p}^{(j)}_r-\boldsymbol{p}^{(j)}_s}>
  \diam\check{Q}$, for every $1\leq r<s\leq L$, and satisfying
  \begin{displaymath}
    \tilde f^j\big(\tilde V\big)\cap
    T_{\boldsymbol{p}^{(j)}_r}(Q)\neq\emptyset, \quad\forall
    j\in\{m',\ldots,m'+M_0\},\ \forall r\in\{1,\ldots,L\}, 
  \end{displaymath}
  where
  \begin{displaymath}
    \partial T_{\boldsymbol{p}^{(j)}_r}(Q)\subset \bigcup_{i=1}^\ell
    \tilde f^{n_{z_i}} (\tilde V) + \boldsymbol{p}_{z_i} +
    \boldsymbol{p}^{(j)} _r.
  \end{displaymath}
  This means that for each $j\in\{m',\ldots,m'+M_0\}$ and every
  $r\in\{1,\ldots,L\}$, there exists $z_{j,r}\in\{z_1,\ldots,z_\ell\}$
  such that
  \begin{equation}
    \label{eq:fj-inter-fznjr}
    \tilde f^j(\tilde V)\cap \Big(\tilde f^{n_{z_{j,r}}}(\tilde
    V) + \boldsymbol{p}_{z_{j,r}} +
    \boldsymbol{p}_r^{(j)}\Big)\neq\emptyset.
  \end{equation}

  Observe that, by \eqref{eq:Q-disjoint-integer-translates}, the sets
  $T_{\boldsymbol{p}^{(j)}_1}(\check{Q}),
  T_{\boldsymbol{p}^{(j)}_2}(\check{Q}), \dots,
  T_{\boldsymbol{p}^{(j)}_L}(\check{Q})$ are two-by-two disjoint which
  implies that
  \begin{equation}
    \label{eq:fnzjr-disjoint}
    \Big(\tilde f^{n_{z_{j,r}}}(\tilde
    V) + \boldsymbol{p}_{z_{j,r}} + \boldsymbol{p}_r^{(j)}\Big) \cap
    \Big(\tilde f^{n_{z_{j,s}}}(\tilde V) + \boldsymbol{p}_{z_{j,s}} +
    \boldsymbol{p}_s^{(j)}\Big) =\emptyset,
  \end{equation}
  for all $j\in\{m',\ldots,m'+M_0\}$, and any $1\leq r<s\leq L$.

  Then, by \eqref{eq:Sn-strip-def} and \eqref{eq:En-vertical-integers}
  we yield
  \begin{displaymath}
    \pr{2}\Big(\boldsymbol{p}^{(j)}_r\Big)\in E_j, \quad\forall
    r\in\{1,\ldots,L\},
  \end{displaymath}
  and any $m'\leq j\leq m'+M_0$.  Therefore, putting together this
  last relation with \eqref{eq:En-cardinality-estimate},
  \eqref{eq:fj-inter-fznjr}, \eqref{eq:fnzjr-disjoint} and
  \eqref{eq:L-estimate-from-below}, we conclude that for each
  $j\in\{m',\ldots,m'+M_0\}$, there are $1\leq r<s\leq L$ such that
  $z_{j,r}=z_{j,s}$ and
  $\pr{2}\big(\boldsymbol{p}^{(j)}_r\big)=
  \pr{2}\big(\boldsymbol{p}^{(j)}_s\big)$. So, for each $j$ we get
  \begin{align*}
    \tilde f^{j-n_{z_{j,r}}}(\tilde V)\cap \Big(\tilde V +
    I_k^{-n_{z_{j,r}}}\big(\boldsymbol{p}_{z_{j,r}}-
    \boldsymbol{p}_r^{(j)}\big)\Big) &\neq\emptyset, \\
    \tilde f^{j-n_{z_{j,s}}}(\tilde V)\cap \Big(\tilde V +
    I_k^{-n_{z_{j,s}}}\big(\boldsymbol{p}_{z_{j,s}}-
    \boldsymbol{p}_s^{(j)}\big)\Big) &\neq\emptyset,
  \end{align*}
  where $I_k$ is the matrix given by \eqref{eq:Im-def},
  $\boldsymbol{p}_{z_{j,r}}-\boldsymbol{p}_r^{(j)}\neq
  \boldsymbol{p}_{z_{j,s}}-\boldsymbol{p}_s^{(j)}$ but their second
  coordinates coincide, \ie
  \begin{displaymath}
    \begin{split}
      \pr{2}\big(\boldsymbol{p}_{z_{j,r}}- \boldsymbol{p}_r^{(j)}\big)
      &= \pr{2}\big(\boldsymbol{p}_{z_{j,s}}-
      \boldsymbol{p}_s^{(j)}\big) \\
      &= \pr{2}\Big(I_k^{-n_{j,r}}\big(\boldsymbol{p}_{z_{j,r}}-
      \boldsymbol{p}_r^{(j)}\big)\Big) = \pr{2}\Big(I_k^{-n_{j,s}}
      \big(\boldsymbol{p}_{z_{j,s}}- \boldsymbol{p}_s^{(j)}\big)\Big).
    \end{split}
  \end{displaymath}
  Then, the existence of the natural number $m$ and the conclusion of
  the lemma itself follow by Lemma~\ref{lem:no-gap-integers}.
\end{proof}

\section{Wandering points as an obstruction to irrational circle
  factors}
\label{sec:wand-points-obstruction-Kronecker}

The main purpose of this section consists in describing the
construction of three examples of totally irrational pseudo-rotations
with uniformly bounded rotational deviations but which do not admit
irrational circle rotations as topological factors.

In these three examples the geometry of the wandering sets plays a
fundamental role, showing that the \emph{small wandering domains}
hypothesis is fundamental and sharp in Theorem A.

First we shall perform a general construction, which is a slight
modification of classical suspensions, that will be used in
\S\S\ref{sec:unbounded-iness-wandering-set} and
\ref{sec:fully-essent-wandering}.

\subsection{Suspending circle homeomorphisms}
\label{sec:susp-circle-homeos}

Given arbitrary homeomorphisms $g_1,g_2\in\Homeo0(\T)$ and a lift
$\tilde g_1\colon\R\carr$ of $g_1$, we will construct the
\emph{``time-$\tilde g_1$ of the suspension flow of $g_2$,''} a
homeomorphism $g\in\Homeo0(\T^2)$ which is defined as follows.

First, consider the equivalence relation $\sim$ on $\R\times\T$ given
by
\begin{equation}
  \label{eq:suspension-relation-def}
  (s,x)\sim (s',x') \iff s'-s\in\Z\ \text{and } x=g_2^{s'-s}(x').
\end{equation}
Since $g_2$ is homotopic to the identity, the quotient space
$(\R\times\T)/\!\sim$ is indeed homeomorphic to $\T^2$, and we shall
just identify them without any further reference.

As usual, we define the \emph{suspension flow}
$\Phi\colon \R\times\T^2\to\T^2$ by
\begin{equation}
  \label{eq:suspension-flow-of-g2}
  \Phi^t[s,x]:=[t+s,x], \quad\forall (s,x)\in\R\times\T,\
  \forall t\in\R, 
\end{equation}
where $[s,x]$ denotes the equivalence class of the point $(s,x)$.

Then, we define the homeomorphism $g\colon\T^2\carr$ as the
\emph{``time-$\tilde g_1$''} of $\Phi$. More precisely, we write
\begin{equation}
  \label{eq:g-time-g1-of-suspension}
  g[t,x]:=[\tilde g_1(t),x], \quad\forall (t,x)\in\R\times\T, 
\end{equation}
where $\tilde g_1\colon\R\carr$ is a lift of $g_1$.

In order to verify that $g$ is indeed well defined, it is enough to
notice that $\tilde g_1$ commutes with any integer translation on
$\R$.

Then we have the following
\begin{proposition}
  \label{pro:g-homeo-general-properties}
  The homeomorphism $g\colon\T^2\carr$ we constructed above exhibits
  the following properties:
  \begin{enumerate}[(i)]
  \item\label{cond:g-skew-prod-struc} $g$ is a skew-product, \ie it
    leaves invariant the ``vertical circle foliation'' given by the
    $\big\{\{x\}\times\T : x\in\T\big\}$;
  \item\label{cond:g-tot-irr-pseudo} $g$ is a pseudo-rotation and its
    rotation vector is totally irrational if and only if the numbers
    $1,\rho(\tilde g_1),\rho(\tilde g_1)\rho(\tilde g_2)$ are linearly
    independent over $\Q$, for any lift $\tilde g_2\colon\R\carr$ of
    $g_2$;
  \item\label{cond:g-unif-bound-rot-dev} $g$ exhibits uniformly
    bounded rotational deviations (\ie condition
    \eqref{eq:pseudo-rot-unif-bound-dev} holds);
  \item\label{cond:g-inness-wand-set} if $g_2$ is a Denjoy
    homeomorphism (\ie it is periodic point free and
    $\W(g_2)\neq\emptyset$), $\tilde g_1$ is fixed point free and
    $\Omega(g_1)=\T$, then it holds
    \begin{displaymath}
      \W(g)= \bigcup_{t\in\R} \Phi^t\big(\{0\}\times\W(g_2)\big).
    \end{displaymath}
    In particular, every connected component of the wandering set
    $\W(g)$ is lift-unbounded and inessential.
  \item\label{cond:g-fully-ess-wand-set} If $g_1$ and $g_2$ are Denjoy
    homeomorphisms, then
    \begin{displaymath}
      \W(g):= \W(g_1)\times\T \cup \bigcup_{t\in\R}
      \Phi^t\big(\{0\}\times\W(g_2)\big).  
    \end{displaymath}
    In particular, $\W(g)$ is connected and fully essential.
  \item\label{cond:Kronecker-factor-minimal-set} when the rotation
    vector of $g$ is totally irrational, the homeomorphism $g$ is a
    topological extension of a minimal translation of $\T^2$ and
    $\Omega(g)$ is the only minimal set for $g$.
  \end{enumerate}
\end{proposition}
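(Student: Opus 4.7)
For items (i)--(iii) I would carry out a direct lift computation. The skew-product structure (i) is immediate from the defining formula $g[t,x]=[\tilde g_1(t),x]$, which preserves the second coordinate. To handle (ii) and (iii) I would work on the universal cover of $M=(\R\times\T)/\!\sim$, realised as $\R^2$ with deck group generated by $(s,x)\mapsto(s,x+1)$ and $(s,x)\mapsto(s+1,\tilde g_2^{-1}(x))$, where $\tilde g_2$ is any lift of $g_2$; in these coordinates the map $\tilde g(s,x):=(\tilde g_1(s),x)$ is a lift of $g$ (commutation with the two generators is automatic since $\tilde g_1,\tilde g_2$ are themselves lifts of circle homeomorphisms). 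After trivialising the suspension bundle through an isotopy from $\mathrm{id}_\T$ to $g_2$ (which exists because $g_2\in\Homeo0(\T)$) and transporting $\tilde g$ to a lift $\tilde G\colon\R^2\carr$ of $g$ through the standard cover $\pi\colon\R^2\to\T^2$, the $n^{\mathrm{th}}$-displacement takes the form
\[\tilde G^n(z)-z=\bigl(\tilde g_1^n(s)-s,\ m_n(s)\rho_2+O(1)\bigr),\]
with $m_n(s)=\lfloor\tilde g_1^n(s)\rfloor-\lfloor s\rfloor=n\rho_1+O(1)$ by the Poincaré bound $\abs{\tilde g_i^n(\cdot)-\cdot-n\rho_i}\leq 1$ (writing $\rho_i:=\rho(\tilde g_i)$). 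This simultaneously proves that $g$ is a pseudo-rotation with rotation vector $(\rho_1,\rho_1\rho_2)$ (so the total-irrationality criterion of (ii) becomes the $\Q$-linear independence of $1,\rho_1,\rho_1\rho_2$) and that the deviations are uniformly bounded by a constant depending only on the Poincaré constants and the trivialisation, yielding (iii).

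For (iv) and (v) the crucial identity is $g^n[s,x]=\Phi^{\tilde g_1^n(s)-s}[s,x]$, so the $g$-orbit samples the flow orbit of $[s,x]$. A standard cross-section argument for the section $\{0\}\times\T$ and return map $g_2$ shows that $\W(\Phi)=\bigcup_{t\in\R}\Phi^t(\{0\}\times\W(g_2))$. Since $\tilde g_1$ is fixed point free, $\abs{\tilde g_1^n(s)-s}\to\infty$ uniformly on compact sets in $s$, so a flow-wandering neighbourhood of $[s,x]$ with $x\in\W(g_2)$ is automatically $g$-wandering. For the reverse inclusion, given $x\in\Omega(g_2)$ I pick $m_k\to\infty$ with $g_2^{m_k}(x)\to x$; the hypothesis $\Omega(g_1)=\T$ provides, for each $k$, an iterate $n_k$ with $\tilde g_1^{n_k}(s)$ arbitrarily close to $s+m_k$, so that
\[g^{n_k}[s,x]=[\tilde g_1^{n_k}(s),x]\longrightarrow [s+m_k,x]=[s,g_2^{m_k}(x)]\longrightarrow[s,x],\]
witnessing $[s,x]\in\Omega(g)$. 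This proves (iv); its components are flow-orbits of wandering intervals of $g_2$ crossing $\{0\}\times\T$ just once, so they wind once around the base, hence are lift-unbounded and inessential. For (v), the same argument applied to the base (now Denjoy) yields an additional ``horizontal'' wandering region corresponding to $\W(g_1)\times\T$; gluing the two pieces along their common boundary points (which belong to $\Omega(g)$) makes $\W(g)$ connected and realises both generators of $H_1(\T^2,\Z)$, hence fully essential.

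For (vi), let $\phi_i\colon\T\to\T$ be the Poincaré semi-conjugacies with $\phi_i\circ g_i=R_{\rho_i}\circ\phi_i$ and fix a lift $\tilde\phi_1\colon\R\to\R$ satisfying $\tilde\phi_1\circ\tilde g_1=\tilde\phi_1+\rho_1$. I would define
\[H[s,x]:=\bigl(\phi_1(s),\,\tilde\phi_1(s)\rho_2+\phi_2(x)\bigr)\bmod\Z^2,\]
and verify that the identities $\tilde\phi_1(s+1)=\tilde\phi_1(s)+1$ and $\phi_2\circ g_2^{-1}=\phi_2-\rho_2\pmod{\Z}$ make $H$ well defined on $M$, while a direct substitution yields $H\circ g=T_{(\rho_1,\rho_1\rho_2)}\circ H$; surjectivity is immediate from the fibrewise surjectivity of $\phi_2$. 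For the uniqueness of the minimal set, any minimal set of $g$ lies in $\Omega(g)$ and projects through $H$ onto the minimal target $\T^2$; combining the description of $\Omega(g)$ from (iv)--(v) as a Cantor-valued fibration over the minimal set $\Omega(g_1)$ with the minimality of each $g_i|_{\Omega(g_i)}$ and a standard skew-product argument over a minimal base gives that $g|_{\Omega(g)}$ is itself minimal, so it is the unique minimal set.

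The main obstacle I anticipate is the rotation-vector computation in (ii): I must reconcile the simple lift $\tilde g(s,x)=(\tilde g_1(s),x)$ on the twisted universal cover of $M$ with a genuine lift through the standard cover $\pi\colon\R^2\to\T^2$, and the trivialisation of the suspension bundle contributes an $O(1)$ error that must be tracked carefully to show bounded deviations. A secondary subtlety is the synchronisation step in (iv), where the density of return times of $\tilde g_1^n(s)$ near integer shifts of $s$ must be matched with recurrence times of $g_2$ at $x\in\Omega(g_2)$; the fixed-point-freeness of $\tilde g_1$ together with $\Omega(g_1)=\T$ is what makes this matching possible.
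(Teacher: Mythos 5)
Your handling of (i), (ii), (iii) and (vi) is genuinely different from, and more explicit than, the paper's. The paper disposes of (ii) with ``one can easily verify,'' of (iii) by quoting an external result on pairs of transverse invariant foliations, and of (vi) by saying that uniqueness of the minimal set ``passes to the flow by suspension.'' Your computation on the twisted universal cover of $(\R\times\T)/\!\sim$, the identification $m_n(s)=\lfloor\tilde g_1^n(s)\rfloor-\lfloor s\rfloor=n\rho(\tilde g_1)+O(1)$ via the Poincaré bound, and the explicit semi-conjugacy $H[s,x]=(\phi_1(\pi(s)),\tilde\phi_1(s)\rho_2+\phi_2(x))\bmod\Z^2$ all check out (the equivariance identities you list are exactly what is needed for $H$ to descend and to intertwine $g$ with $T_{(\rho_1,\rho_1\rho_2)}$), and this buys you a self-contained argument where the paper is terse. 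The one bookkeeping slip is in the final sentence of (iv): for a Denjoy $g_2$ with several components in one $\W(g_2)$-orbit, a component of $\W(g)$ is the full flow tube $\bigcup_{t\in\R}\Phi^t(\{0\}\times J)$, which crosses $\{0\}\times\T$ infinitely often rather than once; the conclusion (lift-unbounded, inessential) is still correct, but not for the reason you give.

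The genuine gap is the synchronisation step in the reverse inclusion of (iv). You fix recurrence times $m_k$ of $g_2$ at $x$ and then assert that $\Omega(g_1)=\T$ produces $n_k$ with $\tilde g_1^{n_k}(s)$ ``arbitrarily close to $s+m_k$.'' This cannot be true for a prescribed sequence $(m_k)$: the numbers $\tilde g_1^n(s)-s$ form a strictly increasing sequence whose consecutive gaps are bounded below (by $\min_t(\tilde g_1(t)-t)>0$), so for a fixed integer $m$ the distance from $m$ to the set $\{\tilde g_1^n(s)-s\}_{n\ge1}$ is a fixed positive number and does not tend to $0$. What you actually need is a \emph{joint} recurrence: for every $\varepsilon>0$, integers $n,m$ with $\abs{\tilde g_1^n(s)-s-m}<\varepsilon$ \emph{and} $d_\T(g_2^{m}(x),x)<\varepsilon$. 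Existence of such pairs is not a formal consequence of $x\in\Omega(g_2)$ and $\pi(s)\in\Omega(g_1)$ taken separately; it requires arguing on the product (or, more cleanly, observing that $\Lambda:=\bigcup_{t}\Phi^t(\{0\}\times\Omega(g_2))$ is a compact $g$-invariant set on which $g$ is a time-change of the minimal suspension flow $\Phi|_\Lambda$, and then showing this restricted system is minimal — which is in fact the content of (vi) — so that $\Lambda\subset\Omega(g)$). The paper also omits this verification (``straightforward consequences of classical results''), so the gap is shared, but your specific synchronisation claim is false as stated and would need to be replaced by one of these arguments. You flag the issue yourself at the end, which is the right instinct; the point is that it is not merely a subtlety of matching return times but a place where the naive statement fails.
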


\begin{proof}
  In order to prove \eqref{cond:g-skew-prod-struc}, it is enough to
  notice that the vertical circle foliation
  $\big\{\{t\}\times\T : t\in\R\big\}$ of $\R\times\T$ is invariant by
  the quotient map $(t,x)\mapsto \big(t+1,g_2^{-1}(x)\big)$ and the
  flow
  $\widetilde{\Phi}\colon \R\times(\R\times\T) \ni \big(t,(s,x)\big)
  \mapsto (t+s,x)$, which is the lift of flow $\Phi$.

  One can easily verify that the there exists a lift
  $\tilde g\colon\R^2\carr$ of $g$ such that
  $\rho(\tilde g)=\Big\{\big(\rho(\tilde g_1),\rho(\tilde
  g_1)\rho(\tilde g_2)\big)\Big\}$. Then this implies
  \eqref{cond:g-tot-irr-pseudo}.

  To prove \eqref{cond:g-unif-bound-rot-dev} it is enough to notice
  that $g$ leaves invariant two topologically transverse foliations
  with different asymptotic homological directions (see
  \cite[Proposition 4.1]{KocKorFoliations} and \cite[Theorem
  5.4]{KocRotDevPerPFree} for details): one of them is the vertical
  circle foliations of the skew-product structure given by
  \eqref{cond:g-skew-prod-struc}; the other one is given by the orbits
  of the (singularity free) flow $\Phi$.

  Properties \eqref{cond:g-inness-wand-set} and
  \eqref{cond:g-fully-ess-wand-set} are straightforward consequences
  of classical results about the non-wandering set of suspension
  flows.

  Finally, in order to prove
  \eqref{cond:Kronecker-factor-minimal-set}, observe that, by
  \eqref{cond:g-tot-irr-pseudo}, if $g$ is a totally irrational pseudo
  rotation, then $g_1$ and $g_2$ have irrational rotation numbers. So,
  by classical Poincaré theory, both of them are topological extension
  of irrational rotations of the circle, and $\Omega(g_1)$ and
  $\Omega(g_2)$ are the only minimal sets for $g_1$ and $g_2$,
  respectively. These properties pass to the flow $\Phi$ by
  suspension.
\end{proof}

\subsection{Unbounded inessential wandering set}
\label{sec:unbounded-iness-wandering-set}

In this paragraph we show the existence of a totally irrational
pseudo-rotation $f\in\Homeo0(\T^2)$, exhibiting uniformly bounded
rotational deviations (in every direction), whose wandering set
$\W(f)$ is connected, inessential and lift-unbounded and such that $f$
does not admit any irrational circle rotation as a topological factor.
 
To do that, let $g_2\colon\T\carr$ be a Denjoy homeomorphism with just
one orbit of wandering domains (\ie $\Per(g_2)=\emptyset$,
$\W(g_2)\neq\emptyset$ and $\W(g_2)=\bigcup_{n}g_2^n(I)$, for any
connected component $I$ of $\W(g_2)$) and
$\tilde g_1=T_{\rho_1}\colon\R\carr$ be a translation such that the
map $g\in\Homeo0(\T^2)$ constructed in \S~\ref{sec:susp-circle-homeos}
is a totally irrational pseudo-rotation (see condition
\eqref{cond:g-tot-irr-pseudo} of
Proposition~\ref{pro:g-homeo-general-properties}). Since $g_2$ has
just one orbit of wandering domains and property
\eqref{cond:g-inness-wand-set} of
Proposition~\ref{pro:g-homeo-general-properties} holds, we have that
$\W(g)$ is connected.

By \eqref{cond:Kronecker-factor-minimal-set} of
Proposition~\ref{pro:g-homeo-general-properties}, we know that $g$ is
a topological extension of a minimal rotation of $\T^2$. One can
easily show that two different points $z,z'\in\T^2$ are Kronecker
equivalent for $g$ (see Definition~\ref{def:Kronecker-equivalence}) if
and only if $\pr{1}(z)=\pr{1}(z')$ and there exists a connected
component $J$ of the set $\W(g)\cap \{\pr{1}(z)\}\times\T$ such that
$z,z'\in\overline{J}$.

So, given any topological open disc $U\subset\W(g)$, we can clearly
find two points $w_0,w_1\in U$ which are not Kronecker equivalent; and
thus, they are in fact Kronecker separated (again see
Definition~\ref{def:Kronecker-equivalence} for details).  Moreover,
taking $U$ small enough, we can assume $U$ is a wandering set for
$g$. Then, since $U$ is open and connected, there exists
$\ell\in\Homeo0(\T^2)$ so that $\supp\ell\subset U\subset\W(g)$ and
$\ell(w_0)=w_1$.

Then we just define $f:=g\circ\ell$. Since $\ell$ is supported in a
$g$-wandering set, it holds $\Omega(f)=\Omega(g)$, and therefore, $f$
and $g$ coincide on this set. So, $f$ has a connected, inessential and
lift-unbounded wandering set as well. Moreover, since $g$ exhibits
uniformly bounded rotational deviations and $\ell$ is supported on a
$g$-wandering lift-bounded set, this implies $f$ exhibits uniformly
bounded rotations deviations as well.

We claim that $f$ does not admit any irrational circle rotation as
topological factor. To prove this, reasoning by contradiction, let us
suppose there is a semi-conjugacy $h\colon\T^2\to\T$ and a minimal
rotation $T_\rho\colon\T\carr$ such that $h\circ f=T_\rho\circ
h$. Since $\Omega(f)$ is a minimal set for $f$, we have that two
points of $\Omega(f)=\Omega(g)$ are Kronecker equivalent for $f$ if
and only if so they are for $g$. Then, for each $i\in\{0,1\}$, we can
consider an arbitrary point
\begin{displaymath}
  w_i'\in\Omega(g)\cap
  \overline{\cc\left(\W(g)\cap\big\{\pr{1}(w_i)\big\}
      \times\T,w_i\right)}.   
\end{displaymath}
Since $w_0$ and $w_1$ are Kronecker separated for $g$, and $w_i$ and
$w_i'$ are $g$-proximal, for $i\in\{0,1\}$, we conclude $w_0'$ and
$w_1'$ are Kronecker separated for $g$. So, $w_0'$ and $w_1'$ are
Kronecker separated for $f$, too; but $w_0$ and $w_1'$ are
$f$-proximal and $w_0$ and $w_0'$ are $f^{-1}$-proximal. This clearly
contradicts the existence of an irrational circle factor.

\subsection{Fully essential wandering set}
\label{sec:fully-essent-wandering}

In this paragraph we describe the construction of a totally irrational
pseudo-rotation $f\in\Homeo0(\T^2)$ with uniformly bounded rotational
deviations, such that the wandering set $\W(f)$ is fully essential and
such that $f$ does not admit any irrational circle factor.

The construction is very similar to that one performed in
\S\ref{sec:unbounded-iness-wandering-set}. In this case we start
considering two Denjoy maps $g_1,g_2\in\Homeo0(\T)$, with lifts
$\tilde g_1,\tilde g_2\colon\R\carr$ such that $1$, $\rho(\tilde g_1)$
and $\rho(\tilde g_1)\rho(\tilde g_2)$ are linearly independent over
$\Q$.

Then, let $g\in\Homeo0(\T^2)$ be the homeomorphism given by the
construction described at the beginning of
\S\ref{sec:susp-circle-homeos}, associated to $\tilde g_1$ and
$g_2$. By \eqref{cond:g-tot-irr-pseudo} and
\eqref{cond:g-fully-ess-wand-set} of
Proposition~\ref{pro:g-homeo-general-properties}, we know that $g$ is
a totally irrational pseudo-rotation and $\W(g)$ is a fully essential
connected set.

Then, by \eqref{cond:g-fully-ess-wand-set} we can choose a point
$w_0\in\W(g)$ such that $\pr{1}(w_0)\in\Omega(g_1)$. So we know there
exist $I\subset\W(g_2)$, $s_0\in\R$ and $y_0\in I$ such that
\begin{displaymath}
  w_0=\Phi^{s_0}(0,y_0)\in\Phi^{s_0}\big(\{0\}\times I\big).
\end{displaymath}
Then, for any $\eps>0$ sufficiently small, the open set
\begin{displaymath}
  U:=\bigcup_{\abs{s-s_0}<\eps}\Phi^{s}\big(\{0\}\times I\big) 
\end{displaymath}
is a wandering set for $g$.

Since $\Omega(g_1)$ has no isolated points and $U$ is an open
topological disc, we know there exists a real number $s_1$ with
$0<\abs{s_1-s_0}<\eps$ such that the point
$w_1:=\Phi^{s_1}(0,y_0)\in U$, $\pr{1}(w_1)\in\Omega(g_1)$ and there
are infinitely many points in the arc flow of $\Phi$ between the
points $w_0$ and $w_1$ such that their projections on the firs
coordinate belongs to $\Omega(g_1)$, \ie the set
\begin{equation}
  \label{eq:infty-many-points-Omega-g2-arc-flow}
  \left\{t\in (0,1) :
    \pr{1}\Big(\Phi^{ts_0+(1-t)s_1}(0,y_0)\Big)\in\Omega(g_1)\right\}
\end{equation}
is not empty, (and in fact, $s_1$ can be chosen such that this set is
infinite).

Now observe that given any point $y\in\Omega(g_2)\subset\T$ and any
pair of real numbers $s<s'$, we have that $\Phi^{s}(0,y)$ and
$\Phi^{s'}(0,y)$ are Kronecker separated if an only if there exists
$t\in (s,s')$ such that $\pr{1}\big(\Phi^t(0,y)\big)\in\Omega(g_1)$.

So, if $y'\in\Omega(g_2)$ be any end point of the interval
$I\subset\W(g_2)$ we considered above, we know that the points
$\Phi^{s_i}(0,y')$ and $w_i=\Phi^{s_i}(0,y_0)$ are $g$-proximal, for
$i=0,1$. Due to our previous remark and property
\eqref{eq:infty-many-points-Omega-g2-arc-flow}, we know that the
points $\Phi^{s_0}(0,y')$ and $\Phi^{s_1}(0,y')$ are Kronecker
separated, and consequently, the points $w_0$ and $w_1$ are Kronecker
separated as well.

As we did in \S\ref{sec:unbounded-iness-wandering-set}, we consider a
homeomorphism $\ell\in\Homeo0(\T^2)$ such that $\ell(w_0)=w_1$ and
$\supp\ell\subset U\subset\W(g)$. Again, we define $f:=g\circ\ell$. By
the very same argument we exposed in
\S\ref{sec:unbounded-iness-wandering-set}, one can show $f$ is a
totally irrational pseudo-rotation exhibiting uniformly bounded
rotational deviations, fully essential wandering set and having no
irrational circle factor.

\subsection{Inessential bounded non-small wandering domains}
\label{sec:large-wandering-domains}

In this paragraph we describe the construction of a totally irrational
pseudo-rotation $f\colon\T^2\carr$ not admitting any irrational circle
factor, and such that it exhibits uniformly bounded rotational
deviations, its wandering set is the union of countably many
inessential lift-bounded wandering domains, all of them having the
same diameter and hence, not satisfying the small wandering domain
hypothesis (see Definition~\ref{def:small-wandering-domains}).

To do that, let us start by considering a totally irrational vector
$\boldsymbol{\alpha}\in\R^2$ and let
$T_{\boldsymbol{\alpha}}\colon\T^2\carr$ be the corresponding rigid
rotation. Given any $\gamma\in\R\setminus\Q$ and $\delta>0$, let us
define
\begin{displaymath}
  \mathscr{F}^\gamma_\delta:=\pi\left\{(t,t\gamma)\in\R^2 : t\in
    (-\delta,\delta)\right\}\subset \T^2.
\end{displaymath}

Notice that fixing the totally irrational vector
$\boldsymbol{\alpha}\in\R^2$, there exists $\delta_0>0$ such that
\begin{displaymath}
  T_{\boldsymbol{\alpha}}^n\big(\mathscr{F}_\delta^\gamma\big)\cap
  \mathscr{F}_\delta^\gamma =\emptyset, \quad\forall\delta\in
  (0,\delta_0),\ \forall n\in\Z\setminus\{0\}.
\end{displaymath}
Then let us fix such a $\delta$.

By classical \emph{``à la Denjoy''} surgery procedures, we can
construct a topological extension $g\in\Homeo0(\T^2)$ of
$T_{\boldsymbol{\alpha}}$ satisfying the following properties: there
exist a continuous map $h\colon\T^2\carr$ in the identity homotopy
class and a sequence $\{\alpha_n\}_{n\in\Z}$ of points of $\T^2$ such
that $h\circ g=T_{\boldsymbol{\alpha}}\circ h$, each fiber $h^{-1}(z)$
is a singleton if and only if
$z\in\T^2\setminus\bigcup_{n\in\Z}
T_{\boldsymbol{\alpha}}^n(\mathscr{F}_\delta^\gamma)$, and
\begin{displaymath}
  h^{-1}(z) = \left\{z + \alpha_n + \pi(-\gamma t, t) : t\in
    \big(-\delta_n(z),\delta_n(z)\big)\right\}, 
\end{displaymath} 
whenever $z\in T_{\boldsymbol{\alpha}}^n(\mathscr{F}_\delta^\gamma)$,
and where
$\delta_n(z):= 2^{-\abs{n}-10}\Big(\delta-d_{\T^2}\big(z,
T_{\boldsymbol{\alpha}}^n(0)\big)\Big)$ and $d_{\T^2}(\cdot,\cdot)$
denotes the distance function given by \eqref{eq:d-Td-dist-def}.

Notice that the wandering set of $g$ is given by
\begin{displaymath}
  \W(g)= \bigsqcup_{n\in\Z}
  h^{-1}\big(T_{\boldsymbol{\alpha}}^n(\mathscr{F}_\delta^\gamma)\big),
\end{displaymath}
where $\bigsqcup$ denotes the disjoint union operator, and
\begin{displaymath}
  \diam\Big(h^{-1}\big(T_{\boldsymbol{\alpha}}^n(\mathscr{F}_\delta^\gamma)
  \big)\Big) = 2\delta, \quad\forall n\in\Z.
\end{displaymath}
So, $g$ does not satisfies the small wandering domain hypothesis given
by Definition~\ref{def:small-wandering-domains}.

Then observe that two different points of $w_0,w_1\in\T^2$ are
Kronecker equivalent for $g$ if and only if there exist $n\in\Z$,
$z\in T_{\boldsymbol{\alpha}}^n(\mathscr{F}_\delta^\gamma)$ and
$t_0,t_1\in[-\delta_n(z),\delta_n(z)]$ satisfying
\begin{displaymath}
  w_i= z+\alpha_n + \pi(-\gamma t_i,t_i), \quad\text{for }
  i\in\{0,1\}.  
\end{displaymath}

In particular, inside the wandering domain
$h^{-1}(\mathscr{F}_\delta^\gamma)$ we can find two points $w_0$ and
$w_1$ which are not Kronecker equivalent. Moreover, since $\gamma$ is
irrational, this implies $w_0$ and $w_1$ are indeed Kronecker
separated.

Then, as we did in \S\S\ref{sec:unbounded-iness-wandering-set} and
\ref{sec:fully-essent-wandering}, and observing
$h^{-1}(\mathscr{F}_\delta^\gamma)$ is an open topological disc, we
can find a homeomorphism $\ell\in\Homeo0(\T^2)$ such that $h(w_0)=w_1$
and $\supp\ell\subset h^{-1}(\mathscr{F}_\delta^\gamma)$. Then we
define $f:=g\circ\ell$.

Again, $g$ clearly exhibits uniformly bounded rotational deviations
and $\ell$ is supported on a lift-bounded wandering domain, so $f$
exhibits uniformly bounded rotational deviations, too. Regarding the
wandering set, it clearly holds $\W(f)=\W(g)$. So, $f$ does not
satisfy the small wandering condition either. There exist points
$w_i'\in\Omega(g)$ such that $w_i$ is Kronecker equivalent to $w_i'$,
for $i\in\{0,1\}$ and $w_0$ is $f$-proximal to $w_1'$ and
$f^{-1}$-proximal to $w_0'$, which are Kronecker separated points. So,
$f$ does not admit any irrational circle factor.

\section{Proofs of Corollaries~\ref{cor:Jaeger-generalization} and
  \ref{cor:Dehn-twist-isotopy-class}}
\label{sec:proofs-corollaries}

In this section we prove both corollaries assuming Theorem A.

\begin{proof}[Proof of Corollary~\ref{cor:Jaeger-generalization}]
  Since $f$ is a totally irrational pseudo-rotation, we know it is
  non-eventually annular. On the other hand, since $f$ exhibits
  uniformly bounded rotational deviations, \ie condition
  \eqref{eq:pseudo-rot-unif-bound-dev} holds, we know that $f$
  satisfies the hypotheses of Theorem A, for every $v\in\bb{S}^1$ of
  rational slope.

  Then, we apply Theorem A twice in order to construct a
  ``horizontal'' and a ``vertical'' irrational circle factor, \ie we
  first consider $v=(1,0)$ and then $v=(0,1)$ to get two continuous
  semi-conjugacies $h_i\colon\T^2\to\T$, with $i=1,2$, such that
  \begin{displaymath}
    h_i\circ f = T_{\rho_i}\circ h_i, \quad\text{for } i=1,2,
  \end{displaymath}
  where $\rho(\tilde f)=(\rho_1,\rho_2)\in\R^2$ is the rotation vector
  of $\tilde f$, and $T_{\rho_i}\colon\T\carr$ is the corresponding
  irrational circle rotation.

  Finally we just define $H\colon\T^2\carr$ by
  \begin{displaymath}
    H(x,y):=\big(h_1(x),h_2(y)\big), \quad\forall (x,y)\in\T^2, 
  \end{displaymath}
  and one can easily check that $H\circ f=T_{\rho(\tilde f)}\circ H$,
  as desired.
\end{proof}

\begin{proof}[Proof of Corollary~\ref{cor:Dehn-twist-isotopy-class}]
  If $f$ is a topological extension of an irrational circle rotation,
  then by Lemma~\ref{lem:non-triv-Kron-factor-implies-BRD} we know
  that given any lift $\tilde f\colon\R^2\carr$ of $f$, there exit
  $C>0$, $v\in\bb{S}^1$ and $\rho\in\R\setminus\Q$ such that the
  estimate of that lemma holds. Since $f$ is homotopic to a Dehn twist
  $I_k$ with $k\neq 0$, we conclude that $v$ is equal to $\pm(0,1)$,
  and so, estimate \eqref{eq:bounded-vertical-rot-dev} holds.

  Reciprocally, if \eqref{eq:bounded-vertical-rot-dev} holds and
  taking into account $f$ is homotopic to a Dehn twist, we conclude
  $f$ is not eventually annular and hence we can directly apply
  Theorem A to conclude the existence of an irrational circle factor.
\end{proof}

\section{$\rho$-centralized skew-product}
\label{sec:rho-centralized-skew}

In this section we introduce the main character of this work: the
$\rho$-centralized skew-product induced by the torus homeomorphism
$f$. This is a generalization of that one we introduced in
\cite{KocMinimalHomeosNotPseudo,KocRotDevPerPFree} to study rotational
deviations for periodic point free homeomorphisms in the identity
homotopy class.

The intuitive idea behind the definition of this skew-product is to
separate the rotational component of a given torus homeomorphism from
its complement which is responsible for the sub-linear rotational
deviations. This idea can be easily formalized regarding the group of
area-preserving $2$-torus homeomorphisms which are homotopic to the
identity. In fact, this group can be written as the semi-direct
product of the group of rotations (which is isomorphic to $\T^2$
itself) and the group of Hamiltonian homeomorphisms, \ie the group of
area-preserving homeomorphisms in the identity homotopy class with
vanishing Lebesgue rotation vector (also called \emph{flux} in
symplectic geometry); and this group-theoretically decomposition can
be taken to the dynamical system setting defining a skew-product
homeomorphism on $\T^2\times\R^2$ which is just a rotation on the base
and acts by (lifts of) Hamiltonian homeomorphisms on the fibers.

The main heuristic of this skew-product can be summarized as follows:
bounded orbits of this skew-product correspond to orbits of the
original homeomorphism with bounded rotational deviations with respect
to the rotation vector that acts on the base of the skew-product. The
reader can find in \cite[\S 6.1]{KocMinimalHomeosNotPseudo} all the
details of this construction, especially those relating the
semi-direct group decomposition of the group of area-preserving
$2$-torus homeomorphism and the definition of the induced
skew-product, that we called \emph{fiberwise Hamiltonian skew-product}
there. Later, in \cite{KocRotDevPerPFree} we extended this
construction for any homeomorphism in the identity isotopy class and
any $\rho\in\R^2$, and we started calling them
\emph{$\rho$-centralized skew-product,} which is the name we shall
continue using from now on.

In this work we will introduce a slight modification of the
construction we performed in \cite{KocRotDevPerPFree} in order to deal
with homeomorphisms in Dehn twist isotopy classes. In fact, as we saw
in \S\ref{sec:rot-set-rot-vect}, when the homeomorphism
$f\in\Homeo{k}(\T^2)$ with $k\neq 0$, one cannot define rotation
vectors as elements of $\R^2$, but just vertical rotation numbers as
given by \eqref{eq:rot-vertical-def}. So, here 
$\rho$-centralized skew-products will be defined on $\T^2\times\A$ and
not on $\T^2\times\R^2$ as done in \cite{KocMinimalHomeosNotPseudo,
  KocRotDevPerPFree}.

Summarizing the results we are going to find in this section, let
$f\in\Homeo{k}(\T^2)$ be a given homeomorphism and
$F\colon\T^2\times\A\carr$ denote the induced $\rho$-centralized
skew-product (that we will formally introduce just after this brief
summary). In Proposition~\ref{pro:F-Hs-commutes} we study the group of
symmetries of $F$, showing it includes a full flow called
$\Gamma$. Then, in Proposition~\ref{pro:F-orbit-bound-vert-dir} we
prove that there is a close relation between points of $\T^2$
exhibiting bounded vertical rotational deviations for $f$ and points
of $\T^2\times\A$ whose $F$-orbits are bounded. One of the main
results of this section is Theorem~\ref{thm:Omega-f-vs-Omega-F} where
we show that under the $\Omega$-recurrence hypothesis, there is
natural correspondence between non-wandering points $f$ and
non-wandering points of $F$, showing in particular that $f$ is a
non-wandering homeomorphism if and only if $F$ is. Finally, in
Theorem~\ref{thm:separating-sets-orbits-of-open-sets} we study the
topology of $F$-invariant bounded connected sets, showing that under
the hypotheses of Theorem A, the complement of theses sets have
exactly two unbounded connected components. The boundary of these sets
will be used in \S\ref{sec:proof-thm-A} to construct the fibers of the
semi-conjugacy with the irrational circle factor.


Let us state now the precise definition of the induced
\emph{$\rho$-centralized skew-product}. Let $k$ be any integer number,
$f\in\Homeo{k}(\T^2)$ an arbitrary homeomorphism and
$\tilde f\colon\R^2\carr$ be a lift of $f$. We know that the
displacement function $\Delta_{\tilde f}:=\tilde f-I_k\colon\R^2\carr$
is $\Z^2$-periodic, and thus, it can be considered as an element of
$C^0(\T^2,\R^2)$, where $I_k\in\SL(2,\Z)$ is given by
\eqref{eq:Im-def}.

For the sake of simplicity, let us write
\begin{displaymath}
  \Delta_i:=\pr{i}\circ\Delta_{\tilde f}, \quad\text{for } i=1,2.
\end{displaymath}

On the other hand, let $\hat f\colon\A\carr$ be the only annulus
homeomorphism such that $\tilde\pi\circ\tilde f=\hat f\circ\tilde\pi$,
where $\tilde\pi$ is the covering map given by
\eqref{eq:tilde-pi-def}.

Then, given any $\rho\in\R$ let us define the \emph{$\rho$-centralized
  skew-product} induced by $\tilde f$ as the homeomorphism
$F\colon\T\times\A\carr$ given by
\begin{equation}
  \label{eq:rho-centr-skew-prod-def}
  F(t,x,\tilde y):=\Big(T_\rho(t), x + k(y+t) +
  \pi\Big(\Delta_1\big(x,y+t\big)\Big), \tilde y +
  \Delta_2(x,y+t)-\rho \Big),
\end{equation}
for every $(t,x,\tilde y)\in\T\times\A = \T\times\T\times\R$ and where
$y:=\pi(\tilde y)\in\T$.

The usefulness of map $F$ can be briefly summarized with the following
simple but important property:

\begin{equation}
  \label{eq:F-fundamental-prop}
  F^n(t,\hat z) = \Big(T_\rho^n(t), \hat T_{\tilde t}^{-1}\circ
  \big(\hat T_\rho^{-n}\circ\hat f^n\big) \circ\hat T_{\tilde
    t}(\hat z)\Big), 
\end{equation}
for any $(t,\hat z)\in\T\times\A$, any $\tilde t\in\pi^{-1}(t)$ and
every $n\in\Z$.

On the other hand, let us notice that the skew-product $F$ has a full
flow of symmetries: for each $s\in\R$ consider the map
$\Gamma^s\colon\T\times\A\carr$ given by
$\Gamma^s:=T_s\times\hat T_{-s}$, \ie
\begin{equation}
  \label{eq:Hs-definition}
  \Gamma^s(t,x,\tilde y):=\big(t+\pi(s), x, \tilde y-s) = \big(T_s(t),
  \hat{T}_{-s}(x,\tilde y)\big), \quad\forall
  (t,x,\tilde y)\in\T\times\A.
\end{equation}
Then, we have the following

\begin{proposition}
  \label{pro:F-Hs-commutes}
  The map $\Gamma\colon\R\times\T\times\A\to \T\times\A$ is a flow
  and, for each $s\in\R$, the homeomorphisms $\Gamma^s$ and $F$
  commute, and $\Gamma^s$ is an isometry of the metric space
  $(\T\times\A, d_{\T\times\A})$, where the distance function is given
  by
  \begin{displaymath}
    d_{\T\times\A}\big((t,z),(t',z')\big) = d_\T(t,t')+d_\A(z,z'),
    \quad\forall (t,z),(t',z')\in\T\times\A.  
  \end{displaymath}
\end{proposition}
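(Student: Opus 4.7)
The proposition bundles three essentially independent claims, and the plan is to dispatch each of them by direct computation from the definitions, with the commutation identity being the only one that requires any care.

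For the flow statement, I would simply observe that $\Gamma^s$ is the product $T_s\times\hat T_{-s}$ of two one-parameter subgroups: $s\mapsto T_s$ in $\Homeo{}(\T)$ and $s\mapsto \hat T_{-s}$ in $\Homeo{}(\A)$. Both are jointly continuous in $(s,\cdot)$, satisfy the identity at $s=0$, and compose additively in the parameter, so $\Gamma$ is a continuous $\R$-action on $\T\times\A$. The isometry claim is equally transparent: the metric $d_{\T\times\A}$ is the sum of $d_\T$ and $d_\A$, and both $d_\T$ and $d_\A$ are defined as the minimum over lifts of the corresponding Euclidean distance on $\R$, respectively $\R^2$ (see \eqref{eq:d-Td-dist-def} and the definition of $d_\A$); since Euclidean translation preserves Euclidean distance, $T_s$ and $\hat T_{-s}$ are isometries, and hence so is their product $\Gamma^s$.

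The substantive step is the commutation $\Gamma^s\circ F=F\circ\Gamma^s$. My plan is to just evaluate both sides on a generic $(t,x,\tilde y)\in\T\times\A$ from \eqref{eq:rho-centr-skew-prod-def} and \eqref{eq:Hs-definition}. Applying $\Gamma^s$ after $F$ only modifies the first coordinate by $+\pi(s)$ and the third coordinate by $-s$, leaving the second coordinate untouched. Applying $F$ after $\Gamma^s$ gives the same shift in the first coordinate (since $T_\rho\circ T_s=T_s\circ T_\rho$ on $\T$), while in the second and third coordinates one has to verify that the arguments of $\Delta_1$, $\Delta_2$ and of the shear term $k(y+t)$ are unchanged. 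The key point is that $\Gamma^s$ shifts $\tilde y$ by $-s$ and $t$ by $+\pi(s)$, so at the level of $\T$ we get $y\mapsto y-\pi(s)$ and $t\mapsto t+\pi(s)$; therefore the combination $y+t\in\T$ that appears in every nontrivial part of the formula for $F$ is invariant. Granted this invariance, the remaining computation reduces to the fact that adding $-s$ commutes with adding $\Delta_2(x,y+t)-\rho$ in $\R$.

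The only place where one might stumble is a bookkeeping confusion between lifts and projections — $\tilde y\in\R$ versus $y=\pi(\tilde y)\in\T$ — so I would take a moment to write explicitly that $\pi(\tilde y-s)=\pi(\tilde y)-\pi(s)$ before unfolding the formulas, which turns the whole verification into a one-line cancellation $(y-\pi(s))+(t+\pi(s))=y+t$ in $\T$. No further obstacles arise.
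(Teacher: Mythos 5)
Your proposal is correct and follows essentially the same route as the paper: verify the one-parameter group law for $\Gamma$ coordinatewise, check $F\circ\Gamma^s=\Gamma^s\circ F$ by plugging into \eqref{eq:rho-centr-skew-prod-def} and \eqref{eq:Hs-definition}, and observe that $T_s$ and $\hat T_{-s}$ are isometries. The one thing you add is the explicit identification of the invariant combination $(y-\pi(s))+(t+\pi(s))=y+t$, which the paper's displayed computation uses but leaves implicit; that is a helpful clarification, not a different argument.
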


\begin{proof}
  This follows from straightforward computations. In fact,
  \begin{displaymath}
    \Gamma^{s+s'}= T_{s+s'}\times \hat{T}_{-s-s'} =
    \big(T_s\times\hat{T}_{-s}\big)\circ
    \big(T_{s'}\times\hat{T}_{-s'}\big) =
    \Gamma^s\circ\Gamma^{s'}, \quad\forall s,s'\in\R.
  \end{displaymath}

  So, given any $s\in\R$ and any
  $(t,x,\tilde y)\in\T\times\A$, it holds
  \begin{displaymath}
    \begin{split}
      &F\big(\Gamma^s(t,x,\tilde y)\big) = F\big(t+\pi(s),x,\tilde y
      -s\big) \\
      &=\Big(T_{\rho}\big(T_s(t)\big), x + k(y+t) +
      \pi\circ\Delta_1(x,y+t), \tilde y -s
      +\Delta_2(x,y+t)-\rho\Big)  \\
      & = \Gamma^s\big(F(t,x,\tilde y)\big).
    \end{split}
  \end{displaymath}

  Finally, $\Gamma^s$ is an isometry because $T_s$ is an isometry of
  $(\T,d_\T)$ and $\hat T_ s$ of $(\A,d_\A)$.
\end{proof}

Then, let us fix some terminology. Given any
$(t,\hat z)\in\T\times\A$, the \emph{$\Gamma$-line through}
$(t,\hat z)$ is its flow line, \ie it is given by
\begin{displaymath}
  \Gamma(t,\hat z):=\left\{\Gamma^s(t,\hat z) \in\T\times\A :
    s\in\R\right\}. 
\end{displaymath}
On the other hand, we introduce the concept of \emph{blocks} of
$\T\times\A$, which are a particular kind of open subsets of
$\T\times\A$: given an open set $V\subset\A$, a point $t\in\T$ and a
positive real number $r$, we define the corresponding \emph{$r$-block
  centered at $t$} associated to $V$ by
\begin{equation}
  \label{eq:V-r-block}
  V^{r,t}:=\bigcup_{\abs{s}<r} \Gamma^s\big(\{t\}\times
  V\big)\subset\T\times\A. 
\end{equation}

A rather simple but important property of blocks is given by the
following
\begin{proposition}
  \label{pro:F-image-of-r-block}
  If $V\subset\A$ is an open subset, $t\in\T$ and $r>0$, and $V^{r,t}$
  is the $r$-block centered at $t$ associated to $V$ given by
  \eqref{eq:V-r-block}, then it holds
  \begin{displaymath}
    F^n\big(V^{r,t}\big)=\big(\hat T_{\tilde t}^{-1}\circ\hat
    T_{\rho}^{-n}\circ \hat f^n\circ\hat T_{\tilde
      t}(V)\big)^{r,T_\rho^n(t)}, \quad\forall n\in\Z,\ \forall\tilde
    t\in\pi^{-1}(t).
  \end{displaymath}
  In particular, the $F$-image of any $r$-block is another $r$-block.
\end{proposition}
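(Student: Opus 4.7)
The plan is to derive the identity directly from the two properties of $F$ already established in this subsection: the commutation with the $\Gamma$-flow (Proposition \ref{pro:F-Hs-commutes}) and the explicit conjugacy formula \eqref{eq:F-fundamental-prop}. No delicate estimate is needed; the proposition is essentially a bookkeeping exercise.

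First, I would unfold the definition of $V^{r,t}$ and use the fact that $F^n\circ\Gamma^s=\Gamma^s\circ F^n$ for every $s\in\R$ and every $n\in\Z$, which follows by induction from Proposition \ref{pro:F-Hs-commutes}. This gives
\begin{displaymath}
  F^n\big(V^{r,t}\big)=\bigcup_{\abs{s}<r}F^n\big(\Gamma^s(\{t\}\times V)\big)
  =\bigcup_{\abs{s}<r}\Gamma^s\big(F^n(\{t\}\times V)\big).
\end{displaymath}
So the problem reduces to computing $F^n(\{t\}\times V)$.

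Next, I would apply \eqref{eq:F-fundamental-prop} pointwise: choosing any $\tilde t\in\pi^{-1}(t)$, for each $\hat z\in V$ we have $F^n(t,\hat z)=(T_\rho^n(t),\hat T_{\tilde t}^{-1}\circ\hat T_\rho^{-n}\circ\hat f^n\circ\hat T_{\tilde t}(\hat z))$, hence
\begin{displaymath}
  F^n(\{t\}\times V)=\{T_\rho^n(t)\}\times
  \hat T_{\tilde t}^{-1}\circ\hat T_\rho^{-n}\circ\hat f^n\circ\hat T_{\tilde t}(V).
\end{displaymath}
Plugging this into the previous display and recognising the right-hand side as the $r$-block centered at $T_\rho^n(t)$ associated to the set $\hat T_{\tilde t}^{-1}\circ\hat T_\rho^{-n}\circ\hat f^n\circ\hat T_{\tilde t}(V)$ — in the sense of \eqref{eq:V-r-block} — yields the claimed equality.

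The only point that deserves a short verification is that the expression on the right-hand side does not depend on the chosen lift $\tilde t$; this is what makes the formula well posed on $\T\times\A$. Replacing $\tilde t$ by $\tilde t+m$ with $m\in\Z$ changes $\hat T_{\tilde t}$ to $\hat T_m\circ\hat T_{\tilde t}$, and since $f\in\Homeo{k}(\T^2)$ the lift $\hat f\colon\A\carr$ satisfies $\hat f\circ\hat T_1=\hat T_1\circ\hat f$ (the integer shift in the $\R$-coordinate of $\A$ commutes with $\hat f$, independently of $k$), so $\hat T_m$ cancels out in the conjugation. Once this invariance is recorded, the proposition — and its final sentence that the $F$-image of any $r$-block is again an $r$-block — is immediate. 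I do not anticipate any real obstacle here.
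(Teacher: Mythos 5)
Your argument is correct and follows exactly the route the paper indicates: unfold the block definition \eqref{eq:V-r-block}, commute $F^n$ past $\Gamma^s$ via Proposition~\ref{pro:F-Hs-commutes}, and then apply \eqref{eq:F-fundamental-prop} pointwise to $\{t\}\times V$. The extra check that the right-hand side is independent of the chosen lift $\tilde t$ (using $\hat f\circ\hat T_1=\hat T_1\circ\hat f$) is a sensible addition the paper leaves implicit.
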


\begin{proof}
  It easily follows from \eqref{eq:F-fundamental-prop},
  Proposition~\ref{pro:F-Hs-commutes} and \eqref{eq:V-r-block}.
\end{proof}

Finally, as a straightforward consequence of
\eqref{eq:F-fundamental-prop} we get the following
\begin{proposition}
  \label{pro:f-vs-F-dynamics}
  Given any $z\in\T^2$, any $n\in\Z$, any $\hat z\in\hat\pi^{-1}(z)$
  and any $\hat w\in\hat\pi^{-1}\big(f^n(z)\big)$ (where projection
  $\hat\pi$ is given by \eqref{eq:hat-pi-def}), the point
  $F^n(0,\hat z)$ belongs to the $\Gamma$-line through $(0,\hat w)$.
\end{proposition}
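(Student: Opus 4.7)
The plan is to rely almost entirely on the fundamental identity \eqref{eq:F-fundamental-prop} and the definition \eqref{eq:Hs-definition} of the flow $\Gamma$. Setting $t=0$ (so one may take $\tilde t = 0$) in \eqref{eq:F-fundamental-prop} yields the clean formula
\begin{equation*}
  F^n(0,\hat z) = \bigl(T_\rho^n(0),\; \hat T_\rho^{-n}\circ\hat f^n(\hat z)\bigr) = \bigl(\pi(n\rho),\; \hat T_\rho^{-n}(\hat f^n(\hat z))\bigr).
\end{equation*}
So the first step is simply to record this identity.

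Next I would exploit the fact that both $\hat w$ and $\hat f^n(\hat z)$ are points of $\hat\pi^{-1}(f^n(z))$. Because the covering map $\hat\pi\colon\A\to\T^2$ defined by \eqref{eq:hat-pi-def} has deck transformation group generated by the vertical unit translation $\hat T_1$, there exists a (unique) integer $m\in\Z$ such that
\begin{equation*}
  \hat w = \hat T_m\bigl(\hat f^n(\hat z)\bigr).
\end{equation*}

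The final step is to produce an explicit parameter $s\in\R$ for which $\Gamma^s(0,\hat w) = F^n(0,\hat z)$. Guided by the two displayed formulas above, I choose $s := n\rho + m$. Then on the base $\T$ we have $\pi(s) = \pi(n\rho) = T_\rho^n(0)$ since $m\in\Z$, which matches the first coordinate of $F^n(0,\hat z)$. On the annular factor,
\begin{equation*}
  \hat T_{-s}(\hat w) = \hat T_{-n\rho - m}\circ \hat T_m\bigl(\hat f^n(\hat z)\bigr) = \hat T_\rho^{-n}\bigl(\hat f^n(\hat z)\bigr),
\end{equation*}
which matches the second coordinate. Hence $F^n(0,\hat z) = \Gamma^s(0,\hat w)$, so $F^n(0,\hat z)$ lies on the $\Gamma$-line through $(0,\hat w)$.

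There is essentially no obstacle here: the statement is a direct unpacking of \eqref{eq:F-fundamental-prop} combined with the observation that two lifts of a common point of $\T^2$ via $\hat\pi$ differ by an element of $\hat T_\Z$, which is precisely the ambiguity absorbed by the $\Gamma$-flow. The only care needed is to remember that one may pick $\tilde t = 0 \in \pi^{-1}(0)$ when applying \eqref{eq:F-fundamental-prop}, so that $\hat T_{\tilde t}$ is the identity and the formula collapses to the simple form above.
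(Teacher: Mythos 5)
Your proof is correct and follows essentially the same route as the paper's: apply \eqref{eq:F-fundamental-prop} with $\tilde t=0$, use the deck-transformation observation that $\hat w$ and $\hat f^n(\hat z)$ (both lying over $f^n(z)$, since $\hat\pi\circ\hat f=f\circ\hat\pi$) differ by $\hat T_m$ for some $m\in\Z$, and read off the flow parameter $s$ from the definition \eqref{eq:Hs-definition} of $\Gamma$. The only cosmetic difference is the sign convention for $m$, and consequently whether the parameter is written $n\rho+m$ or $n\rho-m$.
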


\begin{proof}
  Since $\hat\pi\circ\hat f=f\circ\hat\pi$, we have
  \begin{displaymath}
    \hat\pi\big(\hat f^n(\hat z)\big) = f^n\big(\hat\pi(\hat z)\big) =
    f^n(z) = \hat\pi(\hat w).
  \end{displaymath}
  So there exists $m\in\Z$ such that
  $\hat T_m(\hat w) = \hat f^n(\hat z)$, or equivalently we can write
  \begin{displaymath}
    \Gamma^{-m}(0,\hat w)= \big(0,\hat f^n(\hat z)\big). 
  \end{displaymath}
  
  Then, by \eqref{eq:F-fundamental-prop} we know that
  \begin{displaymath}
    F^n(0,\hat z)= \big(n\pi(\rho), \hat T_\rho^{-n}\circ\hat f^n(\hat
    z)\big) = \Gamma^{n\rho}\big(0,\hat f^n(\hat z)\big) =
    \Gamma^{n\rho-m}(0,\hat w). 
  \end{displaymath}
\end{proof}

\subsection{The $\rho$-centralized skew-product and rotational
  deviations}
\label{sec:rho-centr-skew-prod-rot-dev}

From now on and until the end of this section we suppose
$\tilde f\colon\R^2\carr$ is a lift of a homeomorphism
$f\in\Homeo{k}(\T^2)$ exhibiting uniformly bounded vertical
deviations, with $\rho$, $C$ and $v=(0,1)$ as in
\eqref{eq:unif-bound-v-dev-def}. Let
$F\colon\T\times\A\carr$ be the $\rho$-centralized skew-product
induced by $\tilde f$ as defined in
\eqref{eq:rho-centr-skew-prod-def}.

Then we have the following
\begin{proposition}
  \label{pro:F-orbit-bound-vert-dir}
  Every $F$-orbit is bounded in the vertical direction, \ie if
  $(t,x,\tilde y)\in\T\times A$ is an arbitrary point and we define
  $(t_n,x_n,\tilde y_n):=F^n(t,x,\tilde y)$, then it holds
  \begin{displaymath}
    \abs{\tilde y_m-\tilde y_n}\leq 2C, \quad\forall m,n\in\Z,
  \end{displaymath}
  where $C$ is the constant given by \eqref{eq:unif-bound-v-dev-def}.
\end{proposition}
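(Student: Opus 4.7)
The plan is to read off the vertical coordinate $\tilde y_n$ of $F^n(t,x,\tilde y)$ directly from the fundamental formula \eqref{eq:F-fundamental-prop} and then apply the bounded vertical deviation hypothesis \eqref{eq:unif-bound-v-dev-def} to a single carefully chosen lift. The whole proposition is, at its core, a bookkeeping exercise: once one identifies $\tilde y_n$ with the vertical displacement of a lift of $f^n$ minus $n\rho$, the estimate is handed to us by hypothesis.

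First I would fix $\tilde t\in\pi^{-1}(t)$ and $\tilde x\in\pi^{-1}(x)$, set $z:=(\tilde x,\tilde y+\tilde t)\in\R^2$, and unwind \eqref{eq:F-fundamental-prop}. Since $\hat\pi\circ\tilde\pi=\pi$ and $\tilde\pi\circ\tilde f=\hat f\circ\tilde\pi$, the point $\hat f^n\circ\hat T_{\tilde t}(x,\tilde y)=\tilde\pi(\tilde f^n(z))$ has second coordinate $\pr{2}(\tilde f^n(z))$. Applying $\hat T_\rho^{-n}$ subtracts $n\rho$ from this second coordinate, and applying $\hat T_{\tilde t}^{-1}$ subtracts $\tilde t$. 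Thus
\begin{displaymath}
  \tilde y_n = \pr{2}\big(\tilde f^n(z)\big) - n\rho - \tilde t,
\end{displaymath}
and in particular $\tilde y_0=\pr{2}(z)-\tilde t=\tilde y$, as it should be.

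Next I would rearrange this as
\begin{displaymath}
  \tilde y_n - \tilde y = \pr{2}\big(\tilde f^n(z)\big) - \pr{2}(z) - n\rho = \scprod{\tilde f^n(z)-z}{(0,1)} - n\rho,
\end{displaymath}
so the hypothesis \eqref{eq:unif-bound-v-dev-def} with $v=(0,1)$ yields $|\tilde y_n-\tilde y|\leq C$ for every $n\in\Z$. The triangle inequality
\begin{displaymath}
  |\tilde y_m - \tilde y_n| \leq |\tilde y_m-\tilde y| + |\tilde y-\tilde y_n| \leq 2C
\end{displaymath}
finishes the proof.

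The only delicate point is the first step: one has to be careful that both $\tilde\pi$ and $\hat\pi$ act as the identity on the second coordinate, so that vertical displacements in $\R^2$, $\A$ and the $\R$-factor of $\T\times\A$ can be identified without ambiguity. Once that observation is absorbed, no further ideas are needed.
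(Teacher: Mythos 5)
Your proof is correct and is precisely the argument the paper compresses into the one-line remark that the proposition ``is a straightforward consequence of estimate \eqref{eq:unif-bound-v-dev-def} and property \eqref{eq:F-fundamental-prop}.'' You have simply unwound \eqref{eq:F-fundamental-prop} to identify $\tilde y_n$ with the vertical displacement $\pr{2}(\tilde f^n(z)-z)-n\rho$ and then applied the hypothesis with $v=(0,1)$ plus the triangle inequality, which is exactly the intended route.
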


\begin{proof}
  This is a straightforward consequence of estimate
  \eqref{eq:unif-bound-v-dev-def} and property
  \eqref{eq:F-fundamental-prop}.
\end{proof}

The following result will play a key role in our work:

\begin{theorem}
  \label{thm:Omega-f-vs-Omega-F}
  If $f$ is $\Omega$-recurrent (see
  Definition~\ref{def:non-wandering-recurrent}), then a point
  $(x,y)\in\T^2$ is non-wandering for $f$ if and only if
  $\Gamma^s(0,x,\tilde y)\in\Omega(F)$, for every
  $\tilde y\in\pi^{-1}(y)$ and all $s\in\R$.
\end{theorem}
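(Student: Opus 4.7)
My plan is to introduce the natural semi-conjugacy $p\colon\T\times\A\to\T^2$ defined by $p(t,x,\tilde y):=(x,\,t+\pi(\tilde y))$, where the sum is taken in $\T$. A direct computation using \eqref{eq:rho-centr-skew-prod-def} shows $p\circ F=f\circ p$, and \eqref{eq:Hs-definition} immediately gives $p\circ\Gamma^s=p$ for every $s\in\R$. Moreover, given any $(x,y)\in\T^2$, the fiber $p^{-1}(x,y)$ coincides precisely with the common $\Gamma$-orbit of every point $(0,x,\tilde y)$ with $\tilde y\in\pi^{-1}(y)$: writing a point $(t,x,\tilde y)\in p^{-1}(x,y)$ and any lift $\tilde t\in\pi^{-1}(t)$, one verifies $(t,x,\tilde y)=\Gamma^{\tilde t}(0,x,\tilde y+\tilde t)$ and $\tilde y+\tilde t\in\pi^{-1}(y)$. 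Since $F$ and $\Gamma^s$ commute by Proposition~\ref{pro:F-Hs-commutes}, the set $\Omega(F)$ is $\Gamma$-invariant, and the thesis becomes the equivalent statement: $(x,y)\in\Omega(f)$ if and only if $p^{-1}(x,y)\subset\Omega(F)$.

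The backward direction is immediate. Pick any $(t_0,\hat z_0)\in p^{-1}(x,y)\cap\Omega(F)$ and any open neighborhood $U$ of $(x,y)$ in $\T^2$, and consider the open neighborhood $p^{-1}(U)$ of $(t_0,\hat z_0)$. Since $F$ is a homeomorphism satisfying $p\circ F=f\circ p$, one has $F^n(p^{-1}(U))=p^{-1}(f^n(U))$ for every $n\in\Z$. Hence $F^n(p^{-1}(U))\cap p^{-1}(U)=p^{-1}(f^n(U)\cap U)\neq\emptyset$ for some $n\neq 0$, so $(x,y)\in\Omega(f)$.

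For the forward direction I would first treat the case where $(x,y)$ is a recurrent point of $f$. Fix $\tilde y_0\in\pi^{-1}(y)$ and $n_k\to\infty$ with $f^{n_k}(x,y)\to(x,y)$. Property \eqref{eq:F-fundamental-prop} gives $F^{n_k}(0,x,\tilde y_0)=\bigl(n_k\rho\bmod 1,\,x_k,\,\tilde y_k-n_k\rho\bigr)$ where $(x_k,\tilde y_k)=\hat f^{n_k}(x,\tilde y_0)$. The uniformly bounded vertical deviation hypothesis confines $\tilde y_k-n_k\rho$ to $[\tilde y_0-C,\tilde y_0+C]$, so along a subsequence $F^{n_k}(0,x,\tilde y_0)\to(t^*,x,\tilde y^*)$; continuity of $p$ applied to $p\circ F^{n_k}(0,x,\tilde y_0)=f^{n_k}(x,y)\to(x,y)$ then forces $(t^*,x,\tilde y^*)\in p^{-1}(x,y)$. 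A standard Cauchy-type argument places the limit in $\Omega(F)$: for every neighborhood $\mathcal W$ of $(t^*,x,\tilde y^*)$ and all sufficiently large $j<k$, both $F^{n_j}(0,x,\tilde y_0)$ and $F^{n_k}(0,x,\tilde y_0)$ lie in $\mathcal W$, so $F^{n_k-n_j}$ carries a point of $\mathcal W$ into $\mathcal W$. The $\Gamma$-invariance of $\Omega(F)$ then promotes this to $p^{-1}(x,y)\subset\Omega(F)$.

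To conclude for an arbitrary $(x,y)\in\Omega(f)$, I would invoke Lemma~\ref{lem:non-wandering-vs-recurrent-pts} (available precisely because of the $\Omega$-recurrence hypothesis) to produce recurrent points $(x_j,y_j)\in\Omega(f)$ approaching $(x,y)$. The previous step gives $p^{-1}(x_j,y_j)\subset\Omega(F)$ for each $j$. For any $(t,x,\tilde y)\in p^{-1}(x,y)$, choose $\tilde y_j\in\pi^{-1}(y_j-t)$ close to $\tilde y$ (possible since $y_j-t\to y-t=\pi(\tilde y)$); then $(t,x_j,\tilde y_j)\in p^{-1}(x_j,y_j)\subset\Omega(F)$ and $(t,x_j,\tilde y_j)\to(t,x,\tilde y)$, so closedness of $\Omega(F)$ finishes the proof. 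The main technical hurdle is the algebraic setup of the first paragraph---verifying simultaneously that $p$ semi-conjugates $F$ to $f$, is $\Gamma$-invariant, and has the $\Gamma$-orbits as fibers---after which the dynamical argument reduces to a clean subsequence extraction powered by the bounded vertical deviations.
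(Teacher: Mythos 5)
Your proposal is correct, and it takes a genuinely cleaner route than the paper by making the underlying semi-conjugacy explicit. The map $p(t,x,\tilde y):=(x,t+\pi(\tilde y))$ indeed satisfies $p\circ F=f\circ p$ (direct computation matching the displacement functions $\Delta_1,\Delta_2$ on both sides) and $p\circ\Gamma^s=p$, and its fibers are exactly the $\Gamma$-lines; this identification is essentially the content of Proposition~\ref{pro:f-vs-F-dynamics}, but the paper never promotes it to a genuine projection map. With $p$ in hand, your backward direction (``if $(0,x,\tilde y)\in\Omega(F)$ then $(x,y)\in\Omega(f)$'') becomes a one-line argument via $F^n\big(p^{-1}(U)\big)=p^{-1}\big(f^n(U)\big)$, replacing the paper's unwinding of the $r$-block structure and the analysis of the intersection fiber with the explicit relation $n\rho+s=s'+p$. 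For the forward direction, both proofs are built from the same two ingredients --- density of recurrent points inside $\Omega(f)$ (Lemma~\ref{lem:non-wandering-vs-recurrent-pts}, available by $\Omega$-recurrence) and boundedness of the vertical coordinate along $F$-orbits (Proposition~\ref{pro:F-orbit-bound-vert-dir}) --- but you organize them differently: where the paper applies a pigeonhole argument to the $\Gamma$-segments $\bigcup_{|s|<r}\Gamma^s\big(F^{-n_j}(0,x_j',\tilde y_j')\big)$ inside a fixed segment of bounded length, you extract a convergent subsequence of $F^{n_k}(0,x,\tilde y_0)$, observe that the limit lies in $p^{-1}(x,y)$ (by continuity of $p$ and recurrence of $(x,y)$) and is non-wandering as an $\omega$-limit point, then invoke $\Gamma$-invariance of $\Omega(F)$. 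The final density argument to pass from recurrent to arbitrary non-wandering $(x,y)$ --- approximating a point of $p^{-1}(x,y)$ by points of $p^{-1}(x_j,y_j)$ with $(x_j,y_j)$ recurrent and using closedness of $\Omega(F)$ --- is sound. Overall, your version buys transparency (the semi-conjugacy picture makes both directions conceptual), while the paper's buys concreteness (a direct return-time estimate on explicit blocks, which aligns with the block calculus used throughout Section~4).
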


\begin{proof}
  First observe that, by Proposition~\ref{pro:F-Hs-commutes}, the set
  $\Omega(F)$ is $\Gamma$-invariant. So it is enough to show that a
  point $(x,y)\in\Omega(f)$ if and only if
  $(0,x,\tilde y)\in\Omega(F)$, for some $\tilde y\in\pi^{-1}(y)$.
  
  Then, let us prove the ``if'' direction, which holds without the
  boundedness of rotational deviations and the $\Omega$-recurrence
  assumptions. So, let $(0,x,\tilde y)$ be any point of $\Omega(F)$.

  Let us fix a positive number $\delta$ and write just $B$ for the
  open ball $B_\delta(x,\tilde y)\subset\A$. Then, since
  $(0,x,\tilde y)$ is a non-wandering point, there exists $n\geq 1$
  such that
  \begin{displaymath}
    F^n\big(B^{\delta,0}\big)\cap B^{\delta,0}\neq\emptyset,
  \end{displaymath}
  where $B^{\delta,0}$ denotes the $\delta$-block centered at $0\in\T$
  and associated to $B$, as defined by \eqref{eq:V-r-block}. So, there
  are $s,s'\in (-\delta,\delta)$ such that
  $T_\rho^n\big(\pi(s)\big)=\pi(s')$ and the sets $F^n(B^{\delta,0})$
  and $B^{\delta,0}$ intersect on the fiber $\{s'\}\times\A$. Thus,
  by \eqref{eq:F-fundamental-prop}, we have
  \begin{displaymath}
    \hat{T}_{s}^{-1}\circ\hat{T}_\rho^{-n}\circ\hat{f}^n\circ\hat{T}_s
    \big(\hat{T}_{-s}(B)\big)\cap\hat{T}_{-s'}(B)\neq\emptyset, 
  \end{displaymath}
  and hence,
  \begin{displaymath}
    \hat{f}^n(B)\cap\hat{T}_{n\rho+s-s'}(B) \neq\emptyset.  
  \end{displaymath}
  
  However, since $T_\rho^n\big(\pi(s)\big)=\pi(s')$, we know there
  exists $p\in\Z$ such that $n\rho+s=s'+p$, and then, recalling $B$
  denotes the ball of radius $\delta$ and center $(x,\tilde y)$ in
  $\A$, we get
  \begin{displaymath}
    f^n\big(B_{\delta}(x,y)\big)\cap B_{\delta}(x,y)\neq\emptyset,
  \end{displaymath}
  where $y=\pi(\tilde y)$. Therefore, $(x,y)\in\Omega(f)$, as desired.
  
  Now, let $(x,y)$ be any point of $\Omega(f)$ and let us fix a point
  $\tilde y\in\pi^{-1}(y)$. Given any real number $r>0$, let $B_r$
  denote the open ball $B_r(x,\tilde y)\subset\A$ and $B_r^{r,0}$ be
  the $r$-block centered at $0$ associated to $B_r$. Observe the
  family of blocks $\big\{ B_r^{r,0} : r>0\big\}$ is a local base of
  neighborhoods at the point $(0,x,\tilde y)$. So, let us fix a real
  number $r>0$ that, without loss of generality, we can suppose is
  less than $1/4$, and let us show there exists $n\in\N$ such that
  $B_r^{r,0}\cap F^{-n}(B_r^{r,0})\neq\emptyset$.

  Since $f$ is $\Omega$-recurrent, by
  Lemma~\ref{lem:non-wandering-vs-recurrent-pts} there exists an
  $f$-recurrent point $(x',y')\in B_r(x,y)$ and a strictly increasing
  sequence of positive integers $(n_j)_{j\geq 1}$ such that
  \begin{equation}
    \label{eq:xj-yj-recurrent-points}
    (x_j',y_j'):=f^{n_j}(x',y')\in B_r(x,y), \quad\forall j\in\N.
  \end{equation}

  Since $r<1/4$, there is a unique point
  $\tilde y'\in\pi^{-1}(y')\cap B_{r}(\tilde y)$, and for each
  $j\geq 1$, a unique point
  $\tilde y_j'\in\pi^{-1}(y_j')\cap B_r(\tilde y)$.

  By Proposition~\ref{pro:f-vs-F-dynamics}, for each $j\geq 1$ there
  is a real number $s_j$ such that
  \begin{displaymath}
    F^{n_j}(0,x',\tilde y')=\Gamma^{s_j}(0,x'_j,\tilde y'_j), \quad\forall
    j\geq 1. 
  \end{displaymath}
  On the other hand, by Proposition~\ref{pro:F-orbit-bound-vert-dir}
  we know that
  \begin{displaymath}
    \abs{s_j}\leq 2C,\quad\forall j\in\N. 
  \end{displaymath}
  Now, invoking Proposition~\ref{pro:F-Hs-commutes} we get
  \begin{displaymath}
    F^{-n_j}\bigg(\bigcup_{\abs{s}<r} \Gamma^s(0,x_j',\tilde y_j')\bigg)
    = \bigcup_{\abs{s}<r} \Gamma^s\big(F^{-n_j}(0,x_j',\tilde y_j')\big)    
    \subset \bigcup_{\abs{s}\leq 2C+r} \Gamma^s(0,x',\tilde y'),
  \end{displaymath}
  for every $j\geq 1$.

  Now, recalling $\Gamma$ is an isometric flow (\ie for every $s$,
  $\Gamma^s $ leaves invariant the distance $d_{\T\times\A}$ defined
  in Proposition~\ref{pro:F-Hs-commutes}), we observe the arc on the
  right side of the equation has finite length. On the other hand, on
  the left side of the equation, we have infinitely many constant
  length segments. So we can conclude there exist $k>j\geq 1$ such
  that
  \begin{displaymath}
    F^{-n_k}\bigg(\bigcup_{\abs{s}<r} \Gamma^s\big(0,x_k',\tilde
    y_k'\big)\bigg) \cap F^{-n_j}\bigg(\bigcup_{\abs{s}<r}
    \Gamma^s\big(0,x_j',\tilde y_j'\big)\bigg) \neq\emptyset. 
  \end{displaymath}
  Since $(0,x_j',\tilde y_j'), (0,x_k',\tilde y_k')\in B_r^{r,0}$,
  this implies $F^{n_k-n_j}(B_r^{r,0})\cap B_r^{r,0}\neq\emptyset$,
  with $n_k-n_j>0$. So, $(0,x,\tilde y)\in\Omega(F)$.
\end{proof}

Now we can state the main result of this section:

\begin{theorem}
  \label{thm:separating-sets-orbits-of-open-sets}
  Let us suppose $f\in\Homeo{k}(\T^2)$ is $\Omega$-recurrent, periodic
  point free, exhibits uniformly bounded vertical rotational
  deviations and is not eventually annular. Let $V\subset\T\times\A$
  be a nonempty connected bounded open set such that
  $V\cap\Omega(F)\neq\emptyset$. Then, for each $t\in\T$, the open set
  \begin{displaymath}
    \A\setminus\overline{\U_F(V)}_t
  \end{displaymath}
  has exactly two unbounded connected components, where
  $\U_f(V)\subset\T\times\A$ is the set given by \eqref{eq:U-eps-def}
  and $\U_F(V)_t$ denotes the fiber of $\U_F(V)$ over $t$ given by
  \eqref{eq:fiber-A-over-x}.

  As a consequence of this, one concludes the set $\U_F(V)$ is in fact
  $F$-invariant.
\end{theorem}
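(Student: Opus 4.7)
My approach decomposes the statement into two parts: fiberwise essentiality of $\overline{\U_F(V)}_t$ in $\A$, and then deducing $F$-invariance from it. The starting observation is that, by Proposition~\ref{pro:F-orbit-bound-vert-dir}, every $F$-orbit remains in a vertical strip of width $2C$, so $\bigcup_{n\in\Z}F^n(V)\subset\T\times\T\times[-R,R]$ for some $R=R(V)>0$ depending only on the vertical diameter of $V$ and the constant $C$. Consequently $\overline{\U_F(V)}_t$ is compact in $\A$ for every $t\in\T$, and $\A\setminus\overline{\U_F(V)}_t$ already contains the two unbounded regions $\T\times(R,+\infty)$ and $\T\times(-\infty,-R)$, furnishing at least two unbounded connected components. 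Having \emph{exactly} two is equivalent to $\overline{\U_F(V)}_t$ separating the two ends of $\A$, i.e.\ to its being essential in $\A$.

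To exhibit such a separator in every fiber, pick $(t^*,x^*,\tilde y^*)\in V\cap\Omega(F)$. By Theorem~\ref{thm:Omega-f-vs-Omega-F}, the associated $\T^2$-point $(x^*,\pi(\tilde y^*+\tilde t^*))$ (with $\tilde t^*\in\pi^{-1}(t^*)$) lies in $\Omega(f)$, and Proposition~\ref{pro:ppf-nonannular-homeos-fully-essential-points} upgrades it to a fully essential point for $f$. Since $f\in\Homeo{k}(\T^2)$ is not eventually annular with bounded vertical deviations of irrational rotation $\rho$, one checks that $f$ exhibits unbounded horizontal deviations (automatic when $k\ne 0$; the residual case $k=0$ forces a totally irrational pseudo-rotation with bounded deviations in all directions, which is handled separately). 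Applying Lemma~\ref{lem:essential-annular-images-open-sets} to a small lifted neighborhood of the chosen point yields, for every $N_0\in\N$, a window $\{m,m+1,\ldots,m+N_0\}$ of consecutive integers $j$ for which $\hat f^j(\hat V)$ wraps horizontally around $\A$. Transferring through the identity~\eqref{eq:F-fundamental-prop}, these wraps become horizontal wraps of $F^j(V)$ in the fibers over $T_\rho^j(t^*)$; the irrationality of $\rho$, combined with the $r$-block structure provided by Proposition~\ref{pro:F-image-of-r-block}, then ensures that for every $t\in\T$ some iterate $F^j(V)$ contains a horizontal wrap in the fiber over $t$.

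The central difficulty, which I expect to be the main obstacle, is to promote ``wrap in $\bigcup_n F^n(V)$'' to ``wrap in $\U_F(V)$'', i.e., to guarantee that the wraps lie in the connected component of the orbit containing $V$. I would invoke Lemma~\ref{lem:non-wandering-vs-recurrent-pts} together with the $\Omega$-recurrent hypothesis to locate a recurrent point in $V$ and thereby a sequence $n_k\to\infty$ with $F^{n_k}(V)\cap V\ne\emptyset$; each such iterate then sits inside $\U_F(V)$. A careful coordination of these recurrence times $n_k$ with the windows from Lemma~\ref{lem:essential-annular-images-open-sets} is required so that the horizontal wraps cover every fiber from within $\U_F(V)$, and this interplay between the two densities (of recurrence times and of $\{T_\rho^j(t^*)\}$) is the delicate heart of the argument. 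Once fiberwise essentiality of $\overline{\U_F(V)}_t$ is established, the two unbounded complements are canonically labeled as upper and lower by vertical boundedness, and $F$-invariance follows from a topological rigidity argument: $F$ is orientation-preserving on $\T\times\A$ and preserves the ends of each $\A$-fiber (again by bounded vertical deviations), so it cannot interchange the labels nor cyclically permute disjoint essential components; combined with $F^{n_k}(\U_F(V))=\U_F(V)$, this forces $F(\U_F(V))=\U_F(V)$.
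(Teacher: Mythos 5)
Your opening setup (vertical boundedness via Proposition~\ref{pro:F-orbit-bound-vert-dir}, the reduction of ``exactly two unbounded components'' to fiberwise essentiality of $\overline{\U_F(V)}_t$, and the case split between totally irrational pseudo-rotations and the unbounded-horizontal case) matches the paper. However, for the unbounded-horizontal case you take a genuinely different route from the paper, and that route has a gap you yourself flag without closing.

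You try to prove essentiality \emph{directly}: use Lemma~\ref{lem:essential-annular-images-open-sets} to manufacture horizontal wraps and then place them inside $\U_F(V)$ over every fiber by ``coordinating'' the windows $\{m,\ldots,m+N_0\}$ with the recurrence times of a point of $\Omega(F)\cap V$. You explicitly call this coordination ``the delicate heart of the argument'' and leave it unresolved; as written it is not a proof. It is not at all clear that the essential unions produced by Lemma~\ref{lem:essential-annular-images-open-sets} (which are of the form $\hat T_{q_j}(\hat V)\cup\hat f^j(\hat V)$, not ``$\hat f^j(\hat V)$ wraps'' by itself) can be guaranteed to land in the connected component $\U_F(V)$, nor that by juggling densities one covers \emph{every} $t\in\T$ from within that single component. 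This is a real missing step, not bookkeeping.

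The paper sidesteps this entirely by arguing by \emph{contradiction} and in the opposite direction. It assumes that for some $t_0$ the complement $\A\setminus\overline{\U_F(V)}_{t_0}$ has only one unbounded component, extracts a curve $\gamma$ crossing the strip and disjoint from $\overline{\U_F(V)}_{t_0}$, thickens it to an open window $B_\eps(t_0)$, and then uses irrationality of $\rho$ plus the $\Gamma$-block structure (Propositions~\ref{pro:F-Hs-commutes} and \ref{pro:F-image-of-r-block}) to push this separating curve to \emph{every} fiber. That forces every connected component of $\U_F(V)_t$ to be uniformly lift-bounded for every $t$, and then a single application of the \emph{weaker} Lemma~\ref{lem:eventually-large-non-wandering-open-sets} (via Proposition~\ref{pro:ppf-nonannular-homeos-fully-essential-points}, $\Omega$-recurrence, and Theorem~\ref{thm:Omega-f-vs-Omega-F}) gives $\sup_n\diam\pr{1}(\tilde f^n(\tilde V))=\infty$, contradicting uniform lift-boundedness. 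No density coordination is required, and Lemma~\ref{lem:essential-annular-images-open-sets} is not needed here at all --- the paper saves it for the later disjointness argument for the continua $\ms{C}^s$ in \S\ref{sec:construction-Cs}. Your sketch of the $F$-invariance conclusion is roughly in the right spirit but is vaguer than the paper's: the paper's point is not merely that $F$ preserves ends and orientation, but that a strict inclusion $F(\overline{U^+})\subset U^+$ is incompatible with the $N$-periodicity $F^N(\U_F(V))=\U_F(V)$ guaranteed by the definition of $\U_F$.

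In short: your decomposition is right, but the direct wrap-construction argument is not carried through, and the coordination step you defer is precisely the difficulty the paper's contradiction argument was designed to avoid. Either fill in that coordination (which looks genuinely hard) or switch to the paper's propagation-of-a-separating-curve argument.
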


In order to prove
Theorem~\ref{thm:separating-sets-orbits-of-open-sets}, let us start
considering the following

\begin{lemma}
  \label{lem:separating-sets-orbits-of-open-sets-tot-irrat-bound-dev}
  If $f\in\Homeo0(\T^2)$ is an $\Omega$-recurrent totally irrational
  pseudo-rotation exhibiting uniformly bounded rotational deviations,
  \ie estimate \eqref{eq:pseudo-rot-unif-bound-dev} holds, and $V$ is
  as in Theorem~\ref{thm:separating-sets-orbits-of-open-sets}, then
  $\U_F(V)_t\subset\A$ is an annular set, for every $t\in\T$.
\end{lemma}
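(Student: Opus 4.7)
The strategy exploits the fact that, since $f$ is a totally irrational pseudo-rotation exhibiting uniformly bounded rotational deviations, estimate~\eqref{eq:pseudo-rot-unif-bound-dev} with $\bar\rho=(\rho_1,\rho)$ controls both the vertical and the horizontal components of $F$-orbits: in particular,
\[
  \abs{\pr{1}\big(\tilde f^n(z)-z\big) - n\rho_1} \leq C, \quad\forall z\in\R^2,\ \forall n\in\Z,
\]
so in the annulus $\A$ the $F$-orbits drift horizontally at average rate $\rho_1$ per iterate, up to a uniformly bounded error. Combined with Proposition~\ref{pro:F-orbit-bound-vert-dir}, every $F$-orbit is thus confined to a bounded vertical strip while simultaneously rotating in the horizontal $\T$-factor of $\A$.

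The plan is first to prove that $\bigcup_{n\in\Z} F^n(V)$ is already connected, so that $\U_F(V)$ coincides with the entire orbit of $V$, and then to deduce annularity of every fiber from equidistribution. For the connectivity, I would use the total irrationality of $\bar\rho$: the simultaneous Diophantine approximations $n\rho\equiv 0$ and $n\rho_1\equiv 0\pmod 1$ can be made arbitrarily accurate along a relatively dense set of integers $n$, so by bounded deviations $F^n(V)\cap V\neq\emptyset$ whenever the accuracy is finer than $\diam V$. More generally, any two iterates $F^m(V)$ and $F^n(V)$ can be connected by a finite chain of overlapping iterates, establishing $\U_F(V)=\bigcup_{n\in\Z}F^n(V)$.

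For the annularity of a fixed fiber $\U_F(V)_t$, I would consider the infinite set $N_t:=\{n\in\Z:T_\rho^n(\pr{1}(V))\ni t\}$ provided by minimality of $T_\rho$. For each $n\in N_t$, the fiber $(F^n(V))_t\subset\A$ is a nonempty open set located, up to bounded error, at horizontal position $n\rho_1\bmod 1$ inside a fixed bounded vertical strip. By equidistribution of $\{(n\rho,n\rho_1)\bmod\Z^2\}$, conditioned on $n\rho\in t-\pr{1}(V)$, the horizontal positions $\{n\rho_1\bmod 1:n\in N_t\}$ are equidistributed in $\T$, and since every $(F^n(V))_t$ has positive horizontal width, their union covers $\T$ in the horizontal direction.

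The main obstacle I anticipate is converting this horizontal coverage of $\T$ into a genuine non-contractible loop inside $\U_F(V)_t$, since the pieces $(F^n(V))_t$ sit at varying vertical heights within the common bounded strip and are not a priori aligned. To handle this, I plan to use the flow $\Gamma$: by Proposition~\ref{pro:F-Hs-commutes}, $\Gamma$ commutes with $F$ and acts isometrically, so fibers at nearby base points differ from $\U_F(V)_t$ by a small vertical translation. Combining the horizontal density argument with this vertical flexibility provided by $\Gamma$, I would produce a connected ``ribbon'' inside $\U_F(V)$ whose intersection with $\{t\}\times\A$ contains an essential loop of $\A$, thereby yielding the annularity of $\U_F(V)_t$.
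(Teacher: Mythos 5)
There is a genuine gap in the connectivity step, and the ``ribbon'' alignment step is not salvageable as sketched. First, the claim that ``by bounded deviations $F^n(V)\cap V\neq\emptyset$ whenever the accuracy is finer than $\diam V$'' is false. Estimate \eqref{eq:pseudo-rot-unif-bound-dev} gives $\norm{\tilde f^n(z)-z-n\bar\rho}\leq C$ with a \emph{fixed} constant $C$; so even when $n\bar\rho$ is within $\eps$ of $\Z^2$, the iterate $F^n(V)$ only lands within distance $C+\eps$ of $V$ (plus a controlled base shift), and $C$ is not small. When $\diam V < C$ (which is the generic situation, since $V$ is an arbitrary connected open set with $V\cap\Omega(F)\neq\emptyset$), Diophantine approximation alone cannot force a self-intersection of the orbit. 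The hypothesis $V\cap\Omega(F)\neq\emptyset$ is precisely what supplies recurrence, and your proposal never invokes it. Second, even granting connectivity, your horizontal-equidistribution argument only shows that $\pr{1}$ of the pieces $(F^n(V))_t$ covers $\T$; the pieces sit at different, unaligned heights inside the bounded strip, and a set of this sort can easily be inessential in $\A$ (think of a horizontally dense but vertically staggered family of small disks). Your proposed remedy via the $\Gamma$-flow does not close this gap: by \eqref{eq:Hs-definition}, $\Gamma^s$ moves the base point by $\pi(s)$, so the only values of $s$ that preserve the fiber $\{t\}\times\A$ are integers, which produce a full unit vertical shift rather than the fine vertical adjustment you would need.

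The paper argues in the opposite direction, by contradiction, and this is where total irrationality is actually used. Assuming some fiber $\U_F(V)_t$ is inessential, one lifts $\U_F(V)$ to $\R\times\R^2$; because $\pr{3}$ of the lift is bounded (by Proposition~\ref{pro:F-orbit-bound-vert-dir}), but $\pr{1}$ is not, the lifted component carries a well-defined ``holonomy'' integer $\ell$ with $\widetilde{\U_F(V)}_{\tilde t+1} = T_{(\ell,0)}\big(\widetilde{\U_F(V)}_{\tilde t}\big)$. Using a recurrence sequence $(n_j)$ supplied by $V\cap\Omega(F)\neq\emptyset$ together with estimate \eqref{eq:pseudo-rot-unif-bound-dev}, one gets integers $m_j,p_j$ with $m_j/n_j\to\pr{2}(\bar\rho)$ and $p_j/n_j\to\pr{1}(\bar\rho)$, while the holonomy forces $p_j=\ell m_j$. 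This yields $\pr{1}(\bar\rho)=\ell\,\pr{2}(\bar\rho)$, a rational relation contradicting total irrationality of $\bar\rho$. Your proposal gestures at equidistribution (which indeed uses irrationality) but never produces any such rational obstruction; it attempts a direct construction of an essential loop, which runs into the vertical-alignment problem above, whereas the paper sidesteps that entirely by showing inessentiality forces a forbidden linear relation on the rotation vector.
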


\begin{proof}[Proof of
  Lemma
  \ref{lem:separating-sets-orbits-of-open-sets-tot-irrat-bound-dev}] 
  Let $\tilde f\colon\R^2\carr$ be a lift of $f$ and $\bar\rho\in\R^2$
  be the totally irrational vector appearing in estimate
  \eqref{eq:pseudo-rot-unif-bound-dev}. So, $\rho=\pr{2}(\bar\rho)$ is
  the irrational number we are considering to define the
  $\rho$-centralized skew-product given by
  \eqref{eq:rho-centr-skew-prod-def}.

  Reasoning by contradiction, suppose there exists a nonempty open
  connected subset $V\subset\T\times\A$ and $t\in\T$ such $\U_F(V)_t$
  is not annular, \ie is inessential in $\A$. Since $\rho$ is
  irrational and $\U_F(V)$ is open, we conclude $\U_F(V)_t$ is
  inessential in $\A$, for every $t\in\T$.
  
  Consider the covering map
  $\pi\times\tilde\pi\colon\R\times\R^2\to\T\times\A$ given by
  \begin{equation}
    \label{eq:pi-R3-covering-T-A}
    \pi\times\tilde\pi(\tilde t,\tilde z):=\big(\pi(\tilde t),
    \tilde\pi(\tilde z)\big), \quad\forall (\tilde t,\tilde
    z)\in\R\times\R^2,
  \end{equation}
  and let $\widetilde{\U_f(V)}\subset\R^3$ be a connected component of
  $(\pi\times\tilde\pi)^{-1}\big(\U_F(V)\big)$. Again by irrationality
  of $\rho$, it must hold $\pr{1}\big(\U_F(V)\big)=\T$. On the other
  hand, since $\pi\times\tilde\pi$ is a covering map, and both sets
  $\U_F(V)$ and $\widetilde{\U_F(V)}$ are open and connected, we get
  \begin{equation}
    \label{eq:pi-image-tilde-U-is-U}
    \pi\times\tilde\pi\Big(\widetilde{\U_F(V)}\Big) = \U_F(V). 
  \end{equation}
  So we have $\pr{1}\Big(\widetilde{\U_F(V)}\Big)=\R$.

  On the other hand, since $\U_F(V)$ is open, connected and bounded in
  $\T\times\A$, we know that $\pr{3}\Big(\widetilde{\U_F(V)}\Big)$ is
  also bounded in $\R$. Hence, there exists $\ell\in\Z$, which is
  unique and independent of the choice of the connected component
  $\widetilde{\U_F(V)}$, such that
  \begin{equation}
    \label{eq:ell-holonom-UFV-tilde}
    \widetilde{\U_F(V)}_{\tilde t+1} =
    T_{(\ell,0)}\Big(\widetilde{\U_F(V)}_{\tilde t}\Big), \quad\forall
    \tilde t\in\R.
  \end{equation}

  Since $V$ intersects $\Omega(F)$ and
  \eqref{eq:pi-image-tilde-U-is-U} holds, there is
  $(\tilde t,\tilde z)\in\widetilde{\U_F(V)}$ so that the point
  $(t,z):=\pi\times\tilde\pi(\tilde t,\tilde z)$ belongs to
  $\Omega(F)$. This implies there exists a sequence of positive
  integers $(n_j)_{j\geq 1}$ and a sequence of points
  $\big((t_j,z_j)\big)_{j\geq 1}$ such that
  $(t_j,z_j)\in B_{1/4j}(t,z)\subset\T\times\A$ and
  $F^{n_j}(t_j,z_j)\in B_{1/4j}(t,z)$, for each $j\geq 1$. Observe,
  since $F$ is periodic point free, it necessarily holds
  $n_j\to+\infty$, as $j\to\infty$. So, by
  \eqref{eq:F-fundamental-prop}, we can conclude that for each
  $j\geq 1$ there exist unique numbers $m_j,p_j\in\Z$ such that
  $\abs{m_j-n_j\rho}<1/2j$ and
  \begin{equation}
    \label{eq:f-nj-tj-z-j-displacement-estimate}
    \frac{1}{n_j}\norm{\tilde f^{n_j}(\tilde z_j+(0,\tilde t_j))-
      \tilde z_j - (0,\tilde t_j) - n_j(0,\rho) - (p_j,0)}\leq
    \frac{1}{2jn_j},  
  \end{equation}
  where $(\tilde t_j,\tilde z_j)$ is the only point in
  $(\pi\times\tilde\pi)^{-1}(t_j,z_j)\cap B_{1/j}(\tilde t,\tilde z)$,
  provided $j\geq 4$.
  
  Now, recalling we are assuming $f$ is a pseudo-rotation and
  $\rho(\tilde f)=\{\bar\rho\}$ is the rotation set given by
  \eqref{eq:rho-set-def}, as a consequence of
  \eqref{eq:f-nj-tj-z-j-displacement-estimate} we get
  $p_j/n_j\to\pr{1}(\bar\rho)=\rho$ and $m_j/n_j\to\pr{2}(\bar\rho)$,
  as $j\to\infty$.

  However observe that, by \eqref{eq:ell-holonom-UFV-tilde},
  $p_j=\ell m_j$, for all $j\geq 1$.  This implies that
  $\pr{1}(\bar\rho)=\ell\pr{2}(\bar\rho)$, contradicting the fact that
  $\overline{\rho}$ was totally irrational. So, $\U_F(V)_t$ is annular
  for every $t\in\T$.
\end{proof}

Then we can finish the proof of
Theorem~\ref{thm:separating-sets-orbits-of-open-sets}:

\begin{proof}[Proof of Theorem~\ref{thm:separating-sets-orbits-of-open-sets}]
  If $f\in\Homeo0(\T^2)$ is a pseudo-rotation and exhibits uniformly
  bounded horizontal deviations, recalling vertical deviations are
  uniformly bounded as well, then it exhibits uniformly bounded
  rotational deviations in every direction. Since $f$ is not
  eventually annular, this implies the rotation set of any lift of $f$
  is singleton containing a totally irrational vector. Hence we can
  invoke
  Lemma~\ref{lem:separating-sets-orbits-of-open-sets-tot-irrat-bound-dev}
  to guarantee that $\U_F(V)_t$ is annular for every $t\in\T$, and
  thus, $\A\setminus\overline{\U_F(V)}_t$ has exactly two unbounded
  connected components; and the theorem is proven in this case.
  
  So, now we can assume that $f$ exhibits unbounded horizontal
  rotational deviations, \ie if $\tilde f\colon\R^2\carr$ is a lift of
  $f$, then condition \eqref{eq:horizontal-unbounded-deviations}
  holds.

  Reasoning by contradiction, let us suppose there exists a nonempty
  open connected bounded subset $V\subset\T\times\A$ and $t_0\in\T$,
  such that $V\cap\Omega(F)\neq\emptyset$ and
  $\A\setminus\overline{\U_F(V)}_{t_0}$ has just one unbounded
  connected component.

  Since $f$ exhibits uniformly bounded vertical deviations, by
  \eqref{eq:F-fundamental-prop} we know the set $\U_F(V)$ is bounded
  in $\T\times\A$ and so there exists a constant $C_v>0$ such that
  \begin{equation}
    \label{eq:U-F-vertical-boundedness}
    \U_F(V)\subset \T\times \big(\T\times (-C_v,C_v)\big). 
  \end{equation}

  Thus, since $\A\setminus\overline{\U_F(V)}_{t_0}$ is open, its
  unique unbounded connected component is arc-wise connected as
  well. So there exists a continuous curve $\gamma\colon [0,1]\to\A$
  such that $\gamma(0)\in \T\times (-\infty, -C_v-1)$,
  $\gamma(1)\in\T\times (C_v+1,+\infty)$ and
  $\gamma(s)\not\in\overline{\U_F(V)}_{t_0}$, for every $s\in
  [0,1]$. By compactness of the (image of the) curve $\gamma$ and the
  set $\overline{\U_F(V)}$, there exists $\eps>0$ such that
  \begin{displaymath}
    \gamma(s)\not\in\overline{\U_F(V)}_t, \quad\forall t\in
    B_\eps(t_0),\ \forall s\in [0,1], 
  \end{displaymath}
  where $B_\eps(t_0):=\{t\in\T : d_\T(t,t_0)<\eps\}$. Since the number
  $\rho$ appearing in the base dynamics of the skew-product $F$ is
  irrational, this implies that there exists $N\in\N$ such that
  \begin{displaymath}
    \bigcup_{j=0}^N T_\rho^j\big(B_\eps(t_0)\big) = \T. 
  \end{displaymath}
  So, for every $t\in\T$ there are $n_t\in\{0,1,\ldots, N\}$ and
  $t'\in B_\eps(t_0)$ such that $T_\rho^{n_t}(t')=t$ and then,
  \begin{equation}
    \label{eq:gamma-interate-separetate-vertically-U-F}
    F^{n_t}\big(t',\gamma(s)\big)\not\in\overline{\U_F(V)},
    \quad\forall s\in [0,1]. 
  \end{equation}

  Then, putting together \eqref{eq:U-F-vertical-boundedness} and
  \eqref{eq:gamma-interate-separetate-vertically-U-F} we conclude that
  for every $t\in\T$, every connected component of the set $\U_F(V)_t$
  is lift-bounded in $\A$. Moreover, they are uniformly bounded, \ie
  there is a real constant $C>0$ such that $\diam \widetilde{U}<C$,
  for every $t\in\T$ and every connected component $\widetilde U$ of
  the open set $\tilde\pi^{-1}\big(\U_F(V)_t\big)\subset\R^2$, where
  $\tilde\pi\colon\R^2\to\A$ denotes the covering map given by
  \eqref{eq:hat-pi-def}.

  Let us see this leads us to a contradiction. In fact, by
  Proposition~\ref{pro:F-Hs-commutes} we know the set $\Omega(F)$ is
  $\Gamma$-invariant, and we are assuming
  $V\cap\Omega(F)\neq\emptyset$. So, there exists $s\in\R$ such that
  $\Gamma^s(V)\cap\Omega(F)\cap\big(\{0\}\times\A\big)\neq\emptyset$. Let
  us consider the fiber of $\Gamma^s(V)$ over the point $0\in\T$, \ie
  the set $\big(\Gamma^s(V)\big)_0$. Since $f$ is $\Omega$-recurrent,
  by Theorem \ref{thm:Omega-f-vs-Omega-F} we know that
  \begin{displaymath}
    \big(\Gamma^s(V)\big)_0\cap\hat\pi^{-1}\big(\Omega(f)\big)
    \neq\emptyset. 
  \end{displaymath}
  Hence, there is a connected component $\tilde{V}$ of the open set
  $\tilde\pi^{-1}\Big(\big(\Gamma^s(V)\big)_0\Big)\subset\R^2$ such
  that $\pi(\tilde V)\cap\Omega(f)\neq\emptyset$, where
  $\tilde\pi\colon\R^2\to\A$ and $\pi\colon\R^2\to\T^2$ are the
  natural covering maps.
  
  Now notice, since we are assuming $f$ is periodic point free and
  non-eventually annular, by
  Proposition~\ref{pro:ppf-nonannular-homeos-fully-essential-points},
  every non-wandering point is fully essential, and recalling $f$
  exhibits unbounded horizontal rotational deviations (\ie condition
  \eqref{eq:horizontal-unbounded-deviations} holds), we can invoke
  Lemma~\ref{lem:eventually-large-non-wandering-open-sets} to conclude
  that
  \begin{equation}
    \label{eq:V0-infinite-diameter-estimate}
    \sup_{n\in\Z} \diam\Big(\pr{1}\big(\tilde
    f^n(\tilde V)\big)\Big) = \infty,
  \end{equation}

  Finally, by \eqref{eq:F-fundamental-prop} we know that
  \eqref{eq:V0-infinite-diameter-estimate} contradicts the fact that
  every connected components of $\overline{\U_F(V)}_t$ are uniformly
  lift-bounded, for each $t\in\T$. Thus, we have proved that
  $\A\setminus\overline{\U_F(V)}_t$ has exactly two unbounded
  components for every $t\in\T$.

  In order to finish the proof of
  Theorem~\ref{thm:separating-sets-orbits-of-open-sets}, it just
  remains to show that the set $\U_F(V)$ is $F$-invariant. To do this,
  by the very definition of $\U_F(V)$ (given by \eqref{eq:U-eps-def}),
  we know that $\U_F(V)$ is a periodic set and
  $F\big(\U_F(V)\big)\cap \U_F(V)\neq\emptyset$ if and only if
  $F\big(\U_F(V)\big)=\U_F(V)$.

  So, reasoning by contradiction, if we suppose $\U_F(V)$ is not
  $F$-invariant, then we get that $F\big(\U_F(V)\big)$ and $\U_F(V)$
  are disjoint. We have already proved that the complement of the
  closed set $\overline{\U_F(V)}_t$ has exactly two unbounded
  components on the annulus $\A$, for every $t\in\T$, which are called
  the upper and lower components and are denoted by
  $U^+\big(\overline{\U_F(V)}_t\big)$ and
  $U^-\big(\overline{\U_F(V)}_t\big)$, respectively. So, if $\U_F(V)$
  is not invariant by $F$, since $\overline{\U_F(V)}$ is connected
  then we have either
  $F\big(\U_F(V)_t\big)\subset
  U^+\big(\overline{\U_F(V)}_{T_\rho(t)}\big)$ or
  $F\big(\U_F(V)_t\big)\subset
  U^-\big(\overline{\U_F(V)}_{T_\rho(t)}\big)$, for every
  $t\in\T$. Without loss of generality, if we assume the first
  inclusion holds for every $t\in\T$, we conclude that
  \begin{displaymath}
    F\Big(\overline{U^+\big(\overline{\U_F(V)}_t\big)}\Big) \subset
    U^+\big(\overline{\U_F(V)}_{T_\rho(t)}\big), \quad\forall t\in\T.
  \end{displaymath}

  This implies the open set $U^+$ given by
  \begin{displaymath}
    U^+:=\bigcup_{t\in\T} \{t\}\times U^+\big(\overline{\U_F(V)}_t\big)
    \subset\T\times\A, 
  \end{displaymath}
  satisfies $F\big(\overline{U^+}\big)\subset U^+$ and this clearly
  contradicts the fact there exists $n>1$ such that
  $F^n\big(\U_F(V)\big)=\U_F(V)$.
\end{proof}

\section{Proof of Theorem A}
\label{sec:proof-thm-A}

In order to show that estimate \eqref{eq:v-bounded-deviation} is a
necessary condition for the existence of an irrational circle factor,
no assumption about the non-wandering set is required. In fact, the
proof is based in rather classical arguments and all the details can
be found for instance in \cite[Lemma 3.1]{JaegerTalIrratRotFact}. Let
us just mention that in this case the irrationality of $\rho$ follows
from the non-annularity hypothesis.

To finish the proof of Theorem A, from now on let us suppose
$\tilde f$ is a lift of $f$ and satisfies condition
\eqref{eq:v-bounded-deviation}. By
Proposition~\ref{pro:periodic-point-free-homeos-conj-Hk} and the
remark about rotational deviations at the beginning of
\S\ref{sec:rotat-deviations}, there is no loss of generality assuming
$f\in\Homeo{k}(\T^2)$, for some $k\in\Z$, and $v=(0,1)$ in estimate
\eqref{eq:v-bounded-deviation}. So, the lift $\tilde f\colon\R^2\carr$
of $f$ commutes with the horizontal translation
$T_{(1,0)}\colon\R^2\carr$ and, consequently, induces an annulus
homeomorphism $\hat f\colon\A\carr$ characterized by the
semi-conjugacy equation
$\tilde\pi\circ \tilde f = \hat f\circ\tilde\pi$.

Now, we want to construct a surjective continuous map
$h\colon\T^2\to\T$ such that $h\circ f=T_{\rho}\circ h$, where $\rho$
is the irrational number given by \eqref{eq:v-bounded-deviation}. By
\cite[Theorem 1]{JagerPasseggiTorHomeosSemiConj} we know that there is
no loss of generality assuming that each fiber $h^{-1}(y)$, with
$y\in\T$, is an annular continuum as defined in
\S\ref{sec:dimension-two}. So, following
Jäger~\cite{JaegerLinearConsTorus}, we will define the semi-conjugacy
$h$ starting from its family of fibers.

More precisely, due to technical reasons we will start working on the
annulus $\A$ instead of $\T^2$ and we will construct a family of
continua $\{\ms{C}^s\subset\A : s\in\R\}$ such that each $\ms{C}^s$ is
an essential annular continuum in $\A$, they are well ordered
according to the index, \ie
\begin{equation}
  \label{eq:Cs-well-ordered}
  \ms{C}^s\subset U^-(\ms{C}^r), \quad\text{for all } s<r,
\end{equation}
where $U^-(\cdot)$ denotes the lower connected component of the
complement in $\A$ of the corresponding essential annular continuum as
defined in \S\ref{sec:dimension-two}; and the family satisfies the
following equivariant properties:
\begin{align}
  \label{eq:hat-T-on-C-s}
  \hat T_1\big(\mathscr{C}^s\big)&=\mathscr{C}^{s+1}, \\
  \label{eq:hat-f-on-C-s}
  \hat f\big(\mathscr{C}^s\big)&=\mathscr{C}^{s+\rho}, \quad\forall
                                 s\in\R.
\end{align}
Observe that, in particular, \eqref{eq:Cs-well-ordered} implies that
the continua are two-by-two disjoint, \ie
$\ms{C}^s\cap\ms{C}^r=\emptyset$, whenever $r\neq s$.

Then, we define the map $\hat h\colon\A\to\R$ by
\begin{equation}
  \label{eq:hat-h-definition}
  \hat h(z):=\inf\left\{s\in\R : z\in U^-\big(\ms{C}^s\big)\right\},
  \quad\forall z\in\A.
\end{equation}
By \eqref{eq:hat-T-on-C-s} we know that
$\hat h\circ\hat T_1=T_1\circ \hat h$ and, by \eqref{eq:hat-f-on-C-s},
$\hat h\circ \hat f=T_\rho\circ\hat h$. Thus, $\hat h$ is the lift of
a map $h\colon \T^2\to\T$ and it clearly holds
$h\circ f=T_\rho\circ h$. So, in order to show that $h$ is indeed a
semi-conjugacy, it just remains to prove that $\hat h$, and then $h$
as well, is continuous. This is a consequence of the irrationality of
$\rho$ and the fact that the essential annular continua
$\{\ms{C}^s : s\in\R\}$ are two-by-two disjoint. The reader can find
more details about the proof of the continuity of $\hat h$ given by
\eqref{eq:hat-h-definition}, assuming \eqref{eq:Cs-well-ordered},
\eqref{eq:hat-T-on-C-s} and \eqref{eq:hat-f-on-C-s}, in either
\cite[page 615]{JaegerLinearConsTorus} or \cite[Lemma
3.2]{JagerPasseggiTorHomeosSemiConj}.

\subsection{The construction of continua $\ms{C}^s $}
\label{sec:construction-Cs}

In order to finish the proof of Theorem A, it remains to construct the
family of essential annular continua $\{\ms{C}^s : s\in\R\}$
satisfying properties \eqref{eq:Cs-well-ordered},
\eqref{eq:hat-T-on-C-s} and \eqref{eq:hat-f-on-C-s}.

To do that, let $F\colon\T\times\A\carr$ be the $\rho$-centralized
skew-product induced by $\tilde f$ given by
\eqref{eq:rho-centr-skew-prod-def} and where $\rho$ is the irrational
number appearing in \eqref{eq:v-bounded-deviation}. Since $f$ exhibits
small wandering domains (Definition~\ref{def:small-wandering-domains})
and is periodic point free, by
Proposition~\ref{pro:small-wand-domains-implies-Omega-recurr} we know
that $f$ is $\Omega$-recurrent, and hence, by
Theorem~\ref{thm:Omega-f-vs-Omega-F} we know that $z\in\Omega(f)$ if
and only $(0,\hat z)\in\Omega(F)$, for every
$\hat z\in\hat\pi^{-1}(z)$.

The main new idea of the proof of Theorem A is given by the following
lemma, which could seem at first glance to be rather technical, but is
the core of the method:

\begin{lemma}
  \label{lem:Tau-F-invariant-construction}
  There exists an open bounded connected $F$-invariant subset
  $\Tt\subset\T\times\A$ such that
  \begin{equation}
    \label{eq:Tt-open-set-main-prop}
    \Big(\Tt\setminus\Gamma^{-s}\big(\overline{\Tt}\big)\Big)
    \cap\Omega(F)\neq\emptyset, \quad\forall s\in\R\setminus\{0\}.   
  \end{equation}
\end{lemma}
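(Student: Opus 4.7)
\emph{Proof plan.} I would realize $\Tt$ as an orbit-component $\U_F(V)$ for a carefully chosen small block $V$ centered at a non-wandering point of $F$, then verify \eqref{eq:Tt-open-set-main-prop} by contradiction, exploiting the small-wandering-domains hypothesis.

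First, since $f$ is periodic point free and non-eventually annular, Proposition~\ref{pro:ppf-nonannular-homeos-fully-essential-points} yields a fully essential point $z_0 \in \Omega(f)$. Fix a lift $\hat z_0 \in \hat\pi^{-1}(z_0) \subset \A$. Because $f$ has small wandering domains, Proposition~\ref{pro:small-wand-domains-implies-Omega-recurr} gives $\Omega$-recurrence, and hence by Theorem~\ref{thm:Omega-f-vs-Omega-F} one has $(0,\hat z_0) \in \Omega(F)$. For $r>0$ small, form the block $V_r := B_r(\hat z_0)^{r,0}$ (see \eqref{eq:V-r-block}) and set $\Tt_r := \U_F(V_r)$. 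Theorem~\ref{thm:separating-sets-orbits-of-open-sets}, combined with Proposition~\ref{pro:F-orbit-bound-vert-dir} and \eqref{eq:v-bounded-deviation}, guarantees that $\Tt_r$ is open, bounded, connected, $F$-invariant, and that each fiber-complement of $\overline{\Tt_r}$ in $\A$ has exactly two unbounded components.

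Next I would show that, for $r$ sufficiently small, $\Tt := \Tt_r$ satisfies \eqref{eq:Tt-open-set-main-prop}. Suppose for contradiction this fails for every small $r$; then there exist $s_r \in \R \setminus \{0\}$ with $\Gamma^{s_r}\bigl(\Omega(F) \cap \Tt_r\bigr) \subset \overline{\Tt_r}$. Since $\Gamma^s$ is an isometry of $(\T \times \A, d_{\T \times \A})$ (Proposition~\ref{pro:F-Hs-commutes}) and $\overline{\Tt_r}$ is uniformly bounded, the quantities $|s_r|$ are uniformly bounded, so a subsequence converges to some $s_\infty \in \R$.

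The hard part will be ruling out both possibilities for $s_\infty$. When $s_\infty \neq 0$, passing to the limit yields a nontrivial $\Gamma^{s_\infty}$-quasi-symmetry of the limit set $\bigcap_r \overline{\Tt_r}$ intersected with $\Omega(F)$. I would contradict this by combining the full essentiality of $z_0$ with Lemma~\ref{lem:essential-annular-images-open-sets} (in the case $f$ exhibits unbounded horizontal deviations) or with Lemma~\ref{lem:separating-sets-orbits-of-open-sets-tot-irrat-bound-dev} (in the pseudo-rotation case), following the dichotomy used in the second half of the proof of Theorem~\ref{thm:separating-sets-orbits-of-open-sets}: forced visits of $V_r$ to many distinct integer horizontal translates are incompatible with a genuine $\Gamma^{s_\infty}$-invariance of its $\Omega(F)$-part. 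When $s_\infty = 0$, I would exploit Lemma~\ref{lem:non-wandering-vs-recurrent-pts}: density of recurrent points within $\Omega(f)$ together with small wandering domains supplies non-wandering points of $F$ arbitrarily close to the $\Gamma$-transverse boundary of $\Tt_r$, so that the infinitesimal shift $\Gamma^{s_r}$ must cross that boundary at some non-wandering point. The small-wandering-domains hypothesis is crucial here to prevent large wandering regions from absorbing all such boundary crossings.
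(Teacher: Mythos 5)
You correctly set up the construction — pass from $z_0\in\Omega(f)$ to $(0,\hat z_0)\in\Omega(F)$ via $\Omega$-recurrence and Theorem~\ref{thm:Omega-f-vs-Omega-F}, take a block around $(0,\hat z_0)$, and form $\Tt=\U_F(\cdot)$, which by Theorem~\ref{thm:separating-sets-orbits-of-open-sets} and Proposition~\ref{pro:F-orbit-bound-vert-dir} is open, bounded, connected and $F$-invariant. (Incidentally, the full essentiality of $z_0$ plays no role at this stage; any $z_0\in\Omega(f)$ will do.) When $\Omega(f)$ has nonempty interior this already finishes the proof, since then one can take $\hat V\subset\hat\pi^{-1}\big(\Omega(f)\big)$ and $\Tt\subset\Omega(F)$, so that $\Tt\setminus\Gamma^{-s}(\overline\Tt)$ is automatically a nonempty subset of $\Omega(F)$. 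The real content of the lemma is the case where $\Omega(f)$ has empty interior, and here your proposal has a genuine gap.

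The contradiction scheme you sketch — varying the block radius $r$, extracting a bad shift $s_r$ for each $r$, passing to a subsequence $s_r\to s_\infty$, and ruling out both $s_\infty\neq 0$ and $s_\infty=0$ — does not clearly close. For $s_\infty\neq 0$ you propose to appeal to full essentiality plus Lemma~\ref{lem:essential-annular-images-open-sets} or Lemma~\ref{lem:separating-sets-orbits-of-open-sets-tot-irrat-bound-dev}, but those lemmas concern horizontal spreading of images of balls under $\tilde f^n$; they say nothing about a ``$\Gamma^{s_\infty}$-quasi-symmetry'' of a limit set, and since $\Omega(F)$ is itself $\Gamma$-invariant, the relation $\Gamma^{s_\infty}(\Tt\cap\Omega(F))\subset\overline\Tt$ does not iterate to anything obviously contradictory. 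For $s_\infty=0$ you write that the infinitesimal shift ``must cross the boundary at some non-wandering point'' — but whether the thin annular shell $\Tt_r\setminus\Gamma^{-s_r}(\overline{\Tt_r})$ contains a non-wandering point is precisely the question being asked, and density of recurrent points in $\Omega(f)$ does not by itself prevent that shell from lying entirely in the wandering set when $\Omega(f)$ has empty interior.

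What is missing is the paper's \emph{wandering-completion} trick: instead of bare balls $B_r(\hat z_0)$, one takes
$\hat V:=B_1(\hat z_0)\cup\bigcup\big\{\hat\pi^{-1}(W):W\in\pi_0(\W(f)),\ \hat\pi^{-1}(W)\cap B_1(\hat z_0)\neq\emptyset\big\}$,
which by the small-wandering-domains hypothesis is still bounded, open, connected. With $\Tt=\U_F(\hat V^{1/2,0})$, each connected component of $\Tt\cap\W(F)$ is then a full $1/2$-block $\check W_n$ attached to a single wandering domain $W_n$ of $f$ (use Propositions~\ref{pro:lift-bounded-iness-wandering-domains-ppf-homeos} and \ref{pro:F-image-of-r-block}), and every such block has on its $\Gamma$-transverse boundary a point of $\Omega(F)\cap\Tt$. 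If for some $s>0$ the set $D:=\Tt\setminus\Gamma^{-s}(\overline\Tt)$ misses $\Omega(F)$, then $D$ decomposes into thin $\Gamma$-slabs $D_n\subset\check W_n$, and following a chain of these slabs — at each stage landing within $2^{-k}$ of $\Gamma^s$ of the previous boundary point of $\Omega(F)$ — produces points $w_k\in\overline{D}$ whose third coordinate satisfies $\pr{3}(w_{k+1})\le\pr{3}(w_k)-s+\diam W_{n_{k+1}}+2^{-k}$. Since $\diam W_n\to 0$ (small wandering domains), these sums diverge to $-\infty$, contradicting the boundedness of $\Tt$. This descent argument is the quantitative heart of the lemma; neither the $s_r$-subsequence extraction nor the appeals to Lemmas~\ref{lem:essential-annular-images-open-sets} or \ref{lem:non-wandering-vs-recurrent-pts} substitute for it.
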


Before proving Lemma~\ref{lem:Tau-F-invariant-construction}, let us
see how the open set $\Tt$ can be used to construct our family of
annular continua $\{\ms{C}^s : s\in\R\}$.

First, observe that since $\Tt$ is bounded, connected and
$F$-invariant, and intersects $\Omega(F)$, it holds
$\Tt=\U_F(\Tt)$. So, by
Theorem~\ref{thm:separating-sets-orbits-of-open-sets} we know the set
$\A\setminus\Gamma^{-s}\big(\overline{\Tt}\big)_t$ has exactly two
unbounded connected components, for every $t\in\T$ and any
$s\in\R$. Hence, we can define the sets
\begin{displaymath}
  \Tt_t^{s,-}:=
  U^-\Big(\Gamma^{-s}\big(\overline{\Tt}\big)_t\Big)\subset \A, 
  \quad\forall t\in\T,\ \forall s\in\R, 
\end{displaymath}
\ie it is the lower unbounded connected component of the set
$\A\setminus\Gamma^{-s}\big(\overline{\Tt}\big)_t$. Then we write
\begin{equation}
  \label{eq:C-s-boundary-of-Tt-s-t}
  \mathscr{C}^s:=\partial_\A\big(\Tt_0^{s,-}\big)\subset\A, \quad
  \forall s\in\R. 
\end{equation}

Notice that each $\mathscr{C}^s$ is an essential annular
continuum. Moreover, since $F$ commutes with $\Gamma^{-1}$ and
$\Gamma^{-1}(t,z)=(t,z+1)$, for all $(t,z)\in\T\times\A$, then
\eqref{eq:hat-T-on-C-s} clearly holds.

In order to prove \eqref{eq:hat-f-on-C-s}, observe that, since
$\Gamma^{-s}(\Tt)$ is $F$-invariant and $F$ preserves the ends of the
space $\T\times\A$, we get
\begin{equation}
  \label{eq:F-image-of-Tau-minus}
  F\big(\{t\}\times
  \Tt_t^{s,-}\big)=\{T_\rho(t)\}\times\Tt_{T_\rho(t)}^{s,-},  
  \quad\forall t\in\T,\ \forall s\in\R, 
\end{equation}
and on the other hand,
\begin{equation}
  \label{eq:Gamma-minus-rho-image-Tau-minus}
  \Gamma^{-\rho}\big(\{T_\rho(t)\}\times\Tt_{T_\rho(t)}^{s,-}\big) =
  \{t\}\times \Tt_t^{s+\rho,-}, \quad\forall t\in\T,\ \forall s\in\R.  
\end{equation}
Putting together \eqref{eq:F-fundamental-prop},
\eqref{eq:F-image-of-Tau-minus} and
\eqref{eq:Gamma-minus-rho-image-Tau-minus} we get
\begin{displaymath}
  \hat f(\ms{C}^s) = \ms{C}^{s+\rho}, \quad\forall s\in\R,
\end{displaymath}
and \eqref{eq:hat-f-on-C-s} is proven.

So, it remains to prove that the continua $\{\ms{C}^s: s\in\R\}$ given
by \eqref{eq:C-s-boundary-of-Tt-s-t} satisfies condition
\eqref{eq:Cs-well-ordered}. To do that, it is important to notice that
is enough to show that
\begin{equation}
  \label{eq:Cs-disjoint-2-by-2}
  \ms{C}^s\cap\ms{C}^r=\emptyset, \quad\forall r,s\in\R,\ s\neq r. 
\end{equation}
In fact, one can easily check that condition
\eqref{eq:Cs-well-ordered} follows from \eqref{eq:Cs-disjoint-2-by-2}
and the combinatorics of these essential annular continua given by
conditions \eqref{eq:hat-T-on-C-s}, \eqref{eq:hat-f-on-C-s} and the
fact that
\begin{displaymath}
  \frac{\pr{2}\big(\hat f^n-id\big)}{n}\to\rho, \quad\text{as }
  n\to\infty
\end{displaymath}
uniformly on $\A$, which is a direct consequence of
\eqref{eq:v-bounded-deviation}.

So, condition \eqref{eq:Cs-disjoint-2-by-2} is the only remaining step
to prove that Theorem A is consequence of
Lemma~\ref{lem:Tau-F-invariant-construction}. To do that, let $r$ be
an arbitrary real number and $s\in (0,1)$. We will show that
$\ms{C}^r$ and $\ms{C}^{r+s}$ are disjoint. Then, recalling that
$\ms{C}^r$ and $\ms{C}^{r+s}$ are the boundary of the open set
$\Tt_0^{r,-}$ and $\Tt_0^{r+s,-}$, respectively, we consider the open
set
\begin{displaymath}
  \check{V}:=\Gamma^{-r}(\Tt)\setminus\Gamma^{-(r+s)}
  \big(\overline{\Tt}\big) = \Gamma^{-r}\Big(\Tt\setminus \Gamma^{-s} 
  \big(\overline{\Tt}\big)\Big)\subset\T\times\A.   
\end{displaymath}
Since $\Tt$ and $\Omega(F)$ are $F$-invariant, and by
Proposition~\ref{pro:F-Hs-commutes} $F$ and $\Gamma$ commute, we get
that $\check{V}$ is $F$-invariant, and by
Lemma~\ref{lem:Tau-F-invariant-construction}, it holds
$\check{V}\cap\Omega(F)\neq\emptyset$. Moreover, we claim there exists
a connected component $V\in\pi_0(\check{V})$ such that
\begin{equation}
  \label{eq:cc-V-inter-OmegaF-0-fiber}
  V\cap\Omega(F)\cap \big(\{0\}\times\A\big)\neq\emptyset.
\end{equation}
To prove this, first let $V'$ be any connected component of
$\check{V}$ that intersects $\Omega(F)$. If $(t,\hat z)$ is an
arbitrary point of $V'\cap\Omega(F)$, since $F$ and $\Gamma$ commute
and $V'$ is open, there is a positive number $\eps>0$ such that
\begin{displaymath}
  \Gamma^u(t,\hat z)\in V'\cap\Omega(F)\cap\{t+\pi(u)\}\times\A,
  \quad\forall u\in (-\eps,\eps).  
\end{displaymath}
Then, since $\rho$ is irrational, there is $n\in\Z$ such that
$0\in T^n_\rho\big(B_\eps(t)\big)\subset\T$; and hence,
$V:=F^n(V')\in\pi_0(\check{V})$ satisfies
\eqref{eq:cc-V-inter-OmegaF-0-fiber}.

Now, recalling we are assuming $f$ is not eventually annular, there
are two possible cases to be considered: either $f$ is a totally
irrational pseudo-rotation exhibiting uniformly bounded rotational
deviations (\ie estimate \eqref{eq:pseudo-rot-unif-bound-dev} holds);
or $f$ exhibits uniformly bounded vertical rotational deviations and
unbounded horizontal deviations (\ie estimate
\eqref{eq:horizontal-unbounded-deviations} holds).

In the first case, \ie when $f$ is a totally irrational
pseudo-rotation and \eqref{eq:pseudo-rot-unif-bound-dev} holds, the
disjointness of continua $\ms{C}^r$ and $\ms{C}^{r+s}$ easily follows
from
Lemma~\ref{lem:separating-sets-orbits-of-open-sets-tot-irrat-bound-dev}.
In fact, $V$ is a connected component of an $F$-invariant set and
intersects $\Omega(F)$. So, $V=\U_F(V)$ and by
Lemma~\ref{lem:separating-sets-orbits-of-open-sets-tot-irrat-bound-dev}
we know that $V_t$ is annular, for every $t\in\T$. This implies $V_t$
separates $\ms{C}^r$ and $\ms{C}^{r+s}$.

The second case, \ie when $f$ exhibits unbounded horizontal
deviations, will follow as consequence of
Lemma~\ref{lem:essential-annular-images-open-sets}, but it is a little
more involved.

Let $(0,\hat z)$ be any point belonging to $V\cap\Omega(F)$, and
$\delta>0$ be a sufficiently small number such that
\begin{equation}
  \label{eq:B-delta-block-in-V-def}
  B:= \big(B_\delta(0,\hat z)\big)^{\delta,0} =
  \bigcup_{\abs{u}<\delta} \Gamma^u\big(B_\delta(0,\hat z)\big)
  \subset V. 
\end{equation}
Without loss of generality we can assume $\delta<1/4$. We will show
that there exists $i\in\N$ and $q\in\Z$ such that
\begin{equation}
  \label{eq:Gamma-q-B-cup-Fi-B-separates-Cr-Crs}
  \big(\Gamma^{-q}(B)\cap F^i(B)\big)_0\subset\A
\end{equation}
is an essential set, and this will prove that $\ms{C}^r$ and
$\ms{C}^{r+s}$ are disjoint. In fact, we know that
$B_0\cap \ms{C}^r\subset V_0\cap\ms{C}^r=\emptyset$, and hence, by
\eqref{eq:hat-T-on-C-s},
$\big(\Gamma^{-q}(B)\big)_0\cap\hat T_q(\ms{C}^r) = \hat
T_q(B_0)\cap\ms{C}^{r+q}=\emptyset$, where $B_0$ denotes the fiber of
$B\subset\T\times\A$ over the point $0\in\T$. Analogously, one can
show that $\big(\Gamma^{-q}(B)\big)_0\cap \ms{C}^{r+s+q}=\emptyset$;
and since we are assuming $0<s<1$, we know that
$B_0\cap\ms{C}^{r+q}= B_0\cap\ms{C}^{r+s+q}=\emptyset$, for any
$q\in\Z$. So, condition \eqref{eq:Gamma-q-B-cup-Fi-B-separates-Cr-Crs}
clearly shows that $\ms{C}^{r+q}\cap\ms{C}^{r+s+q}=\emptyset$, and by
\eqref{eq:hat-T-on-C-s}, this holds if and only if
$\ms{C}^r\cap\ms{C}^{r+s}=\emptyset$.

So, let us show \eqref{eq:Gamma-q-B-cup-Fi-B-separates-Cr-Crs}
holds. By the minimality of $T_\rho\colon\T\carr$, we know there
exists a natural number $N$ such that for any $m\in\Z$, there is
$i\in\{m,m+1,\ldots,m+N\}$ such that
$T_\rho^i(0)\in B_{\delta/4}(0)\subset\T$.

On the other hand, if we write $z:=\hat\pi(\hat z)$, by
Theorem~\ref{thm:Omega-f-vs-Omega-F} we know that $z\in\Omega(f)$. Let
$\tilde z$ be any point in $\pi^{-1}(z)=\tilde\pi^{-1}(\hat
z)$. Applying Lemma~\ref{lem:essential-annular-images-open-sets} for
the open set $B_{\delta/4}(\tilde z)\subset\R^2$ and the natural
number $N$ given by the above condition, we know that there is
$m\in\N$ such that for every $j\in\{m,m+1,\ldots,m+N\}$, there are
$p_j,p_j',q_j\in\Z$ with $p_j\neq p_j'$ and such that
$\tilde f^j\big(B_{\delta/4}(\tilde z)\big)\cap
T_{(p_j,q_j)}\big(B_{\delta/4}(\tilde z)\big)\neq\emptyset$ and
$\tilde f^j\big(B_{\delta/4}(\tilde z)\big)\cap
T_{(p_j',q_j)}\big(B_{\delta/4}(\tilde z)\big)\neq\emptyset$; or in
other words, we get that the open set
$\hat T_{q_j}\Big(\tilde\pi\big(B_{\delta/4}(\tilde
z)\big)\Big)\cup\hat f^j\Big(\tilde\pi\big(B_{\delta/4}(\tilde
z)\big)\Big)$ is essential in $\A$, for every $j\in\{m,\ldots,m+N\}$.

Then, let $i$ be a natural number such that $i\in\{m,\ldots,m+N\}$ and
$T_\rho^i(0)\in B_{\delta/4}(0)$, and let us consider the open set
\begin{displaymath}
  \hat{B}^{(i)}:=\hat T_{q_i}\Big(\tilde\pi\big(B_{\delta/4}(\tilde
  z)\big)\Big)\cup\hat f^i\Big(\tilde\pi\big(B_{\delta/4}(\tilde
  z)\big)\Big) = \hat T_{q_i}\big(B_{\delta/4}(\hat z)\big)\cup \hat
  f^i\big(B_{\delta/4}(\hat z)\big) \subset\A,
\end{displaymath}
since we are assuming $\delta<1/4$. We know that $\hat{B}^{(i)}$ is an
essential set and putting together this with
Propositions~\ref{pro:F-Hs-commutes} and \ref{pro:F-image-of-r-block},
and \eqref{eq:B-delta-block-in-V-def}, we conclude that there is an
integer number $q$ such that the set given by
\eqref{eq:Gamma-q-B-cup-Fi-B-separates-Cr-Crs} is an essential set in
$\A$, as we wanted to prove.  Then, \eqref{eq:Cs-well-ordered} is
proven.

So, the last step of the proof of Theorem A is
\begin{proof}[Proof of Lemma~\ref{lem:Tau-F-invariant-construction}]
  The proof of this lemma is considerably simpler in the case that
  $\Omega(f)$ has nonempty interior (\eg when $f$ is non-wandering)
  than in the case where the wandering set is dense in $\T^2$.

  So, for the sake of simplicity of the exposition let us start
  proving the lemma under the assumption that the non-wandering set
  has nonempty interior. In such a case, let $V\subset\Omega(f)$ be an
  open, nonempty, lift-bounded and connected set.

  Since we are assuming $f$ is periodic point free and exhibits small
  wandering domains, by
  Proposition~\ref{pro:small-wand-domains-implies-Omega-recurr} we
  know that $f$ is $\Omega$-recurrent, and thus, by
  Theorem~\ref{thm:Omega-f-vs-Omega-F} it holds
  $\{0\}\times \hat\pi^{-1}(V)\subset \Omega(F)\subset\T\times\A$,
  where $\hat\pi\colon\A\to\T^2$ is the natural covering map given by
  \eqref{eq:hat-pi-def}.

  Let $\hat V$ be any connected component of
  $\hat\pi^{-1}(V)\subset\A$.  Since we are assuming $V$ is
  lift-bounded, $\hat V$ is bounded in $\A$ as well. Then, we define
  \begin{equation}
    \label{eq:Tau-def}
    \Tt:= \U_F\Big(\hat{V}^{\frac{1}{2},0}\Big),
  \end{equation}
  where $\hat{V}^{\frac{1}{2},0}$ denotes the $1/2$-block induced by
  $V$ and centered at $0\in\T$ given by \eqref{eq:V-r-block}, and
  $\U_F(\cdot)$ is given by \eqref{eq:U-eps-def}. By
  Theorem~\ref{thm:separating-sets-orbits-of-open-sets} we know that
  $\Tt$ is $F$-invariant, and by Proposition~\ref{pro:F-Hs-commutes}
  we know that $\Tt\subset\Omega(F)$.

  On the other hand, since $\hat V$ is bounded in $\A$, by
  Proposition~\ref{pro:F-orbit-bound-vert-dir} $\Tt$ is bounded in
  $\T\times\A$ as well. This implies that for every $s\neq 0$,
  $\Tt\setminus\Gamma^{-s}\big(\overline{\Tt}\big)$ is nonempty and,
  clearly
  $\Tt\setminus\Gamma^{-s}\big(\overline{\Tt}\big)\subset\Omega(F)$. Hence,
  Lemma~\ref{lem:Tau-F-invariant-construction} is proven under the
  additional hypothesis that $\Omega(f)$ has nonempty interior.

  So, from now on let us suppose the non-wandering set $\Omega(f)$ has
  empty interior.

  Let $z_0$ be any point of $\Omega(f)$ and $\hat z_0$ an arbitrary
  point in $\hat\pi^{-1}(z_0)\subset\A$. Now let us define the set
  $\hat V$ as the \emph{wandering completion} of the open unit ball
  $B_1(\hat z_0)\subset\A$, \ie $\hat V$ is given by
  \begin{equation}
    \label{eq:hat-V-open-set-def}
    \hat V:= B_1(\hat z_0)\cup \bigcup \left\{\hat\pi^{-1}(W) :
      W\in\pi_0\big(\T^2\setminus\Omega(f)\big), \hat\pi^{-1}(W)\cap
      B_1(\hat z_0)\neq\emptyset \right\}.
  \end{equation}
  Since $f$ exhibits small wandering domains, $\hat V$ is open,
  bounded and connected, and since $z_0\in\Omega(f)$, by
  Theorem~\ref{thm:Omega-f-vs-Omega-F} we know
  $(0,\hat z_0)\in\Omega(F)\cap\Tt$. Hence, the set
  $\Tt:= \U_F\Big(\hat{V}^{\frac{1}{2},0}\Big)$ is again
  $F$-invariant, open and bounded in $\T\times\A$. This implies
  $\Tt\setminus\Gamma^{-s}\big(\overline{\Tt}\big)\neq\emptyset$, for
  every $s\in\R\setminus\{0\}$.

  So, it just remain to prove that the set
  $\Tt\setminus\Gamma^{-s}\big(\overline{\Tt}\big)$ intersects
  $\Omega(F)$, for every $s\neq 0$. To do that, let us just consider
  the case where $s>0$; the other one is completely analogous.

  Now, observe that combining
  Proposition~\ref{pro:lift-bounded-iness-wandering-domains-ppf-homeos},
  Proposition~\ref{pro:F-image-of-r-block} and
  \eqref{eq:hat-V-open-set-def} we know that each connected component
  of the open set $\Tt\cap\W(F)$ is a $1/2$-block, each of them is a
  wandering set for $F$ and their $\Gamma$-orbits are two-by-two
  disjoint. More precisely, if $\check{W}_1,\check{W}_2,\ldots$ denote
  the (infinitely many) connected components of $\Tt\cap\W(F)$, then
  for each $n\geq 1$ there exists a unique connected component $W_n$
  of $\W(f)$ such that if we choose any connected component
  $\hat{W}_n$ of the open set $\hat\pi^{-1}(W_n)\subset\A$, there is a
  unique $r_n\in\R$ satisfying
  \begin{equation}
    \label{eq:Tt-WF-connected-components}
    \check{W}_n=\bigcup_{\abs{u}<\frac{1}{2}}
    \Gamma^{u+r_n}\big(\{0\}\times\hat{W}_n\big).
  \end{equation}

  On the other hand, by our definition \eqref{eq:hat-V-open-set-def}
  and Theorem~\ref{thm:Omega-f-vs-Omega-F} again, we see that for each
  $n\geq 1$, there exists at least a point
  $\hat z_n\in\partial_\A(\hat{W}_n)$ such that
  \begin{equation}
    \label{eq:z-hat-n-point-in-W-n-Omega}
    \Gamma^{r_n+u}(0,\hat
    z_n)\in\overline{\check{W}_n}\cap\Tt\cap\Omega(F), \quad\forall
    u\in(-1/2,1/2).  
  \end{equation}
  
  Reasoning by contradiction let us suppose that there exists $s>0$
  such that the open set
  $D:=\Tt\setminus\Gamma^{-s}\big(\overline{\Tt}\big)$ does not
  intersects $\Omega(F)$. This implies, invoking
  \eqref{eq:Tt-WF-connected-components}, that $D$ has infinitely many
  connected components and all of them can be enumerated as follows:
  \begin{equation}
    \label{eq:cc-D-description}
    D_n:=\bigcup_{\abs{u}<\frac{s}{2}}
    \Gamma^{r_n+\frac{1-s}{2}+u}\big(\{0\}\times\hat{W}_{n}\big),
    \quad\forall n\geq 1, 
  \end{equation}
  and $D=\bigsqcup_{n\geq 1} D_n$.
  
  Then, we will construct a subsequence $(D_{n_j})_{j\geq 1}$ of
  connected components of $D$ such that for every $M>0$ there is a
  $j\geq 1$ with $D_{n_j}\cap \T\times (-\infty,-M)\neq\emptyset$,
  contradicting the fact that $D$ was a bounded subset of
  $\T\times\A$.

  To do that, we will define the sequence $(D_{n_j})_{j\geq 1}$
  inductively. Let us start defining $n_1=1$. By
  \eqref{eq:z-hat-n-point-in-W-n-Omega} we know that
  \begin{displaymath}
    w_1:=\Gamma^{r_1+\frac{2-s}{4}}(0,\hat{z}_{1})\in
    \overline{D_1}\cap\Tt\cap\Omega(F),
  \end{displaymath}
  but since $D$ and $\Omega(F)$ are disjoint, we get that
  $w_1\not\in D_{n_1}=D_1$. So,
  $w_1\in\Gamma^{-s}\big(\overline{\Tt}\big)$. That means that there
  exists $n_2\in\N$ such that
  \begin{displaymath}
    d_{\T\times\A}\big(\Gamma^s(w_1), \check{W}_{n_2}\big) <
    \frac{1}{2^2},
  \end{displaymath}
  
  and we define
  \begin{displaymath}
    w_2:=\Gamma^{r_{n_2}+\frac{2-s}{4}}(0,\hat{z}_{n_2})\in
    \overline{D}_{n_2}\cap\Tt\cap\Omega(F).
  \end{displaymath}

  Inductively, supposing that $n_k\in\N$ and $w_k\in\T\times\A$ has
  been already defined, we choose $n_{k+1}\in\N$ as any natural number
  satisfying
  \begin{equation}
    \label{eq:n-k+1-def}
    d_{\T\times\A}\big(\Gamma^s(w_k), \check{W}_{n_{k+1}}\big) <
    \frac{1}{2^{k+1}},
  \end{equation}
  and then we define
  \begin{equation}
    \label{eq:w-k+1-def}
    w_{j+1}:=\Gamma^{r_{n_{k+1}}+\frac{2-s}{4}}(0,\hat{z}_{n_{k+1}})
    \in \overline{D}_{n_{k+1}}\cap\Tt\cap\Omega(F).
  \end{equation}

  Then, putting together \eqref{eq:Tt-WF-connected-components},
  \eqref{eq:z-hat-n-point-in-W-n-Omega}, \eqref{eq:cc-D-description},
  \eqref{eq:n-k+1-def} and \eqref{eq:w-k+1-def} we conclude that
  \begin{displaymath}
    \pr{3}(w_{k+1}) \leq \pr{3}(w_k) -s + \diam W_{n_{k+1}} +
    \frac{1}{2^k}, \quad\forall  k\geq 1.
  \end{displaymath}
  Iterating this last estimate one easily gets
  \begin{equation}
    \label{eq:last-cooridante-w-k+1-estimate}
    \begin{split}
      \pr{3}(w_{k+1})&\leq \pr{3}(w_1) + \sum_{j=1}^{k} \big(\diam
      W_{n_{j+1}} -s) + \sum_{j=1}^k \frac{1}{2^j} \\
      &< \pr{3}(w_1) + 1 + \sum_{j=1}^{k} \big(\diam W_{n_{j+1}} -s)
      \to -\infty,
    \end{split}
  \end{equation}
  as $k\to+\infty$, where the last limit follows from the small
  wandering domain hypothesis which implies $\diam W_{n_k}\to 0$, as
  $n_k\to+\infty$.

  Since $w_k\in\overline{D}_{n_k}\subset\overline{D}$, for every
  $k\in\N$, estimate \eqref{eq:last-cooridante-w-k+1-estimate} clearly
  contradicts the fact that $D$ was bounded in $\T\times\A$.
\end{proof}

\bibliographystyle{amsalpha} \bibliography{references}

\end{document}